\documentclass[11pt, leqno]{amsart}
\usepackage{amsfonts}
\usepackage{amsfonts,latexsym,rawfonts,amsmath,amssymb,amsthm, a4,a4wide}
\usepackage{mathrsfs}
\usepackage[plainpages=false]{hyperref}

\usepackage{graphicx}

\RequirePackage{color}

\voffset=-30pt

\numberwithin{equation}{section}

\newcommand{\beq}{\begin{equation}}
\newcommand{\eeq}{\end{equation}}
\newcommand{\beqs}{\begin{eqnarray*}}
\newcommand{\eeqs}{\end{eqnarray*}}
\newcommand{\beqn}{\begin{eqnarray}}
\newcommand{\eeqn}{\end{eqnarray}}
\newcommand{\beqa}{\begin{array}}
\newcommand{\eeqa}{\end{array}}
\newcommand{\bb}{\text{\bf{b}}}

\newcommand{\R}{\mathbb R}

\newcommand{\abs}[1]{\left\vert#1\right\vert}

\newcommand{\e}{\varepsilon}
\newcommand{\p}{\partial}

\newcommand{\Om}{\Omega}

\newcommand{\bom}{{\overline\Om}}

{\begin{list}{}%
         {\setlength{\leftmargin}{#1}}%
         \item[]%
}
{\end{list}}

\newtheorem{prop}{Proposition}[section]
\newtheorem{thm}[prop]{Theorem}
\newtheorem{lem}[prop]{Lemma}

\newtheorem{rem}[prop]{Remark}

\newtheorem{defi}[prop]{Definition}

\renewcommand{\div}{\mbox{div}\,}
\newcommand{\trace}{\mbox{trace}\,}
\newcommand{\dist}{\text{dist}}

\newcommand{\norm}[1]{\left\Vert#1\right\Vert}

\author{Young Ho Kim}
\address{Department of Mathematics, Indiana University, 
Bloomington, IN 47405, USA}
\email {yk89@iu.edu}
\author{Nam Q. Le}
\address{Department of Mathematics, Indiana University, 
Bloomington, IN 47405, USA}
\email {nqle@iu.edu}
\author{Ling Wang}
\address{School of Mathematical Sciences, Peking University, Beijing, 100871, China}
\email{lingwang@stu.pku.edu.cn}
\author{Bin Zhou}
\address{School of Mathematical Sciences, Peking University, Beijing, 100871, China}
\email{bzhou@pku.edu.cn}
\thanks{Y.H. Kim and N. Q. Le were supported in part by NSF grant DMS-2054686.  L. Wang and B. Zhou were supported by National Key R$\&$D Program of China SQ2020YFA0712800 and NSFC grants 11822101}
\allowdisplaybreaks
\arraycolsep=1pt

\title[On the singular Abreu equations]{Singular Abreu equations and linearized Monge-Amp\`ere equations with drifts}

\makeatletter
\@namedef{subjclassname@2020}{%
  \textup{2020} Mathematics Subject Classification}
\makeatother

\begin{document}
\subjclass[2020]{35J40, 35J96, 35B65, 35B45}
\keywords{Singular Abreu equation, linearized Monge-Amp\`ere equation with drift, second boundary value problem, Monge-Amp\`ere equation, Legendre transform, pointwise H\"older estimate}
\begin{abstract}
We study the solvability of singular Abreu equations  which arise in the approximation of convex functionals subject to a convexity constraint. Previous works established the solvability of their second boundary value problems either in two dimensions, or in higher dimensions under either a smallness condition or a radial symmetry condition.
Here, we solve the higher dimensional case by 
transforming singular Abreu equations into linearized Monge-Amp\`ere equations with drifts. We establish
global H\"older estimates for the  linearized Monge-Amp\`ere equation with drifts under suitable hypotheses,
and then use them to the regularity and solvability of the second boundary value problem for singular Abreu equations in higher dimensions. 
Many cases with general right-hand side will also be discussed.
\end{abstract}

\maketitle

\section{Introduction and statements of the main results}
In this paper, we study the solvability of  the second boundary value problem of the following fourth order Monge-Amp\`ere type equation on a bounded, smooth, uniformly convex domain $\Omega\subset\R^n$ ($n\geq 2$):
\begin{equation}
\label{Abreu-sin}
  \left\{ 
  \begin{alignedat}{2}\sum_{i, j=1}^{n}U^{ij}D_{ij}w~& =-\gamma\div (|Du|^{q-2} Du)+ \bb\cdot Du+c(x, u):=f(x, u, Du, D^2u)~&&\text{\ in} ~\ \ \Omega, \\\
 w~&= (\det D^2 u)^{-1}~&&\text{\ in}~\ \ \Omega,\\\
u ~&=\varphi~&&\text{\ on}~\ \ \p \Omega,\\\
w ~&= \psi~&&\text{\ on}~\ \ \p \Omega.
\end{alignedat}
\right.
\end{equation}
Here  $\gamma\geq 0$, $q>1$, $U=(U^{ij})_{1\leq i, j\leq n}$ is the cofactor matrix of the Hessian matrix $$D^2 u=(D_{ij}u)_{1\leq i, j\leq n}\equiv \left(\frac{\p^2 u}{\p x_i \p x_j}\right)_{1\leq i, j\leq n}$$
of an unknown uniformly convex function $u\in C^2(\overline{\Omega})$;  $\varphi\in C^{3,1}(\overline\Omega)$, $\psi\in C^{1,1}(\overline\Omega)$, $\bb:\overline{\Omega}\rightarrow\R^n$ is a vector field on $\overline{\Omega}$, and $c(x, z)$ is a function on $\bom\times\mathbb R$. When the right-hand side $f$ depends only on the independent variable, that is $f=f(x)$, (\ref{Abreu-sin}) is the {\it Abreu equation} arising from the problem of finding extremal metrics on toric manifolds in K\"ahler geometry \cite{Ab}, and it is equivalent to
$$\sum_{i,j=1}^n\frac{\partial^2 u^{ij}}{\partial x_i\partial x_j}=f(x),$$
where $(u^{ij})$ is the inverse matrix of $D^2u$. 
The general form in \eqref{Abreu-sin} was introduced by the second author in \cite{LeCPAM, LeJAM, LePRS} in the study of convex functionals with a convexity constraint related to the Rochet-Chon\'e model \cite{RC} for the monopolist's problem in economics, whose Lagrangian depends on the gradient variable; see also Carlier-Radice \cite{CR} for the case where the Lagrangian does not depend on the gradient variable. 

More specifically, in the calculus of variations with a convexity constraint, one considers minimizers of  convex functionals
\[ \int_{\Omega} F_0(x, u(x), Du(x)) \,dx\]
among certain classes of convex competitors, where $F_0(x,z,\mathbf{p})$ is a function on $\overline{\Omega}\times \mathbb R\times \mathbb R^n$. 
One example is the Rochet-Chon\'e model with $q$-power ($q>1$) cost  
 \[F_{q,\gamma}(x,z,\mathbf{p})=(|\mathbf{p}|^q/q-x\cdot \mathbf{p}+z)\gamma(x),\] 
 where $\gamma$ is nonnegative and Lipschitz function called the relative frequency of agents in the population.

Since it is in general difficult to handle the convexity constraint, especially in numerical computations \cite{BCMO, Mir}, instead of analyzing these functionals directly, one might consider analyzing their perturbed versions by adding  
the penalizations $-\e\int_\Omega \log \det D^2u \,dx$ which are convex functionals in the class of $C^2$, strictly convex functions. The heuristic idea is that the logarithm of the Hessian determinant should act as a good barrier for the convexity constraint. This was verified numerically in \cite{BCMO} at a discretized level. 
Note that, critical points, with respect to compactly supported variations, of the convex functional
\[ \int_{\Omega} F_0(x, u(x), Du(x)) \,dx -\e\int_\Omega \log \det D^2u \,dx,\]
satisfy the Abreu-type equation
\[\e U^{ij} D_{ij} [(\det D^2 u)^{-1}]= -\sum_{i=1}^n \frac{\p}{\p x_i} \big(\frac{\p F_0}{\p p_i} (x, u, Du)\big) +  \frac{\p F_0}{\p z}(x, u, Du).\]
Here we denote $\mathbf{p} =(p_1,\ldots, p_n)\in\R^n$. 
In particular, for the Rochet-Chon\'e model with $q$-power ($q>1$) cost and unit frequency $\gamma\equiv 1$, that is, $F_0=F_{q,1}$, the above right-hand side is 
\[-\div (|Du|^{q-2} Du)+ n,\] which belongs to the class of right-hand sides considered in  \eqref{Abreu-sin}. When $F_0(x, z, \mathbf{p})=F(\mathbf{p}) + \hat F(x, z)$ 
the above right-hand side becomes \[-\div (DF(Du)) + \frac{\p \hat F}{\p z}(x, u).\]

 When $\gamma>0$,  we call \eqref{Abreu-sin} a {\it singular Abreu equation} because its right-hand side depends on $D^2u$ which can be just a matrix-valued measure for a merely convex function $u$.

Our focus in this paper will be on the case $\gamma>0$. For simplicity, we will take $\gamma=1$.

The Abreu type equations can be included in a class of fourth order Monge-Amp\`ere type equations of the form
\beq\label{4-eq-g}
U^{ij}D_{ij}[g(\det D^2 u)]=f
\eeq 
where $g:(0,\infty)\rightarrow (0,\infty)$ is an invertible function. In particular, when $g(t)=t^{\theta}$, one can take
  $\theta=-1$ and  $\theta=-\frac{n+1}{n+2}$ to get the Abreu type equation and the {\it affine mean curvature} type equation \cite{Ch}, respectively.
  It is convenient to write (\ref{4-eq-g}) as a system of two equations for $u$ and $w=g(\det D^2 u)$. One is a Monge-Amp\`ere equation for the convex function $u$ in the form of 
 \beq\label{MA1}
 \det D^2 u=
g^{-1}(w)
\eeq
 and other is the following linearized Monge-Amp\`ere equation for $w$:
\beq\label{LMA1}
U^{ij} D_{ij }w=f.
\eeq
The second-order linear operator  $\sum_{i, j=1}^nU^{ij}D_{ij}$ is the linearized Monge-Amp\`ere operator associated with the convex function $u$ because its coefficient matrix comes from linearizing the Monge-Amp\`ere operator:
\[U= \frac{\p\det D^2 u }{\p (D^2 u)}.\]
When $u$ is sufficiently smooth, such as $u\in W^{4, s}_{\text{loc}}(\Omega)$ where $s>n$, the expression $\displaystyle\sum_{i, j=1}^{n}U^{ij}D_{ij}w$ can be written as $\displaystyle\sum_{i, j=1}^{n}D_i(U^{ij}D_{j}w)$, since the cofactor matrix $(U^{ij})$ is divergence-free, that is, \[\displaystyle\sum_{i=1}^n D_i U^{ij}=0\] for all $j$.
 The regularity  and solvability of equation \eqref{4-eq-g}, under suitable boundary conditions, are closely related to the regularity theory  of the linearized Monge-Amp\`ere equation, initiated in the fundamental work of Caffarelli-Guti\'errez \cite{CG}. In the past  two decades, there have been many progresses on the study of these equations and related geometric problems, 
 including \cite{D1, D2, D3, D4, TW1, TW2, TW3, Z1, Z2, CW, CHLS, Le1, Le2}, to name a few.

According to the decomposition \eqref{MA1} and \eqref{LMA1}, a very natural boundary value problem for the class of fourth order equation (\ref{4-eq-g}) is the second boundary value problem where one describes the values of $u$ and $w$ on the boundary $\p\Omega$ as in \eqref{Abreu-sin}.  
 \subsection{Previous results and difficulties}
 \label{pre_res}
A summary of solvability results for (\ref{Abreu-sin}), or more generally, the second boundary value problem for (\ref{4-eq-g}), for the case $f\equiv f(x)$ is as follows. For the second boundary value problem of the affine mean curvature equation, that is, (\ref{4-eq-g}) with $g(t) = t^{-\frac{n+1}{n+2}}$,  Trudinger-Wang  \cite{TW2, TW3} proved the existence of a unique $C^{4,\alpha}(\overline\Omega)$ solution when $f\in C^\alpha(\overline\Omega)$ with $f\leq 0$, and 
a unique $W^{4,p}(\Omega)$ solution when $f\in L^{\infty}(\Omega)$ with $f\leq 0$. The analogous result for the Abreu equation (\ref{Abreu-sin}) was then obtained by the fourth author \cite{Z2}.
For the $W^{4,p}(\Omega)$ solution, the second author  \cite{Le1}  solved (\ref{Abreu-sin}) for $f\in L^p(\Omega)$ with $p>n$ and $f\leq 0$. The sign on $f$ was removed by Chau-Weinkove  \cite{CW} under the assumption that $f\in L^{p}(\Omega)$ with $p>n$ and $f^+:=\max\{f, 0\}\in L^{q}(\Omega)$ with $q>n+2$ for the affine mean curvature equation. Finally, in \cite{Le2}, the second author showed that the $W^{4,p}(\Omega)$ solution exists under the weakest assumption 
$f\in L^{p}(\Omega)$ with $p>n$ for a broad class of equations like \eqref{4-eq-g}, including both the affine mean curvature equation and the Abreu equation.
We will concentrate on the singular Abreu equation \eqref{Abreu-sin}, and its solvability in $C^{4,\alpha}$ and $W^{4, s}$ $(s>n)$ in this paper. We obtain solvability by establishing a priori higher order derivative estimates and then using the degree theory. 
 Essentially, establishing a priori estimates requires establishing the Hessian determinant estimates for $u$, and H\"older estimates for $w$.  

For the  singular Abreu equation,  the dependence of  the right-hand side on $D^2u$ creates two new difficulties in applying the regularity theory of the linearized Monge-Amp\`ere equation. The first difficulty lies in obtaining the \textit{a priori} lower and upper bounds for $\det D^2 u$, which is a critical step in 
applying the regularity results of the linearized Monge-Amp\`ere equation. The appearance of $D^2u$ has very subtle effects on the Hessian determinant estimates. The second author \cite{LeCPAM} obtained the Hessian determinant estimates for the case  $f=-\div (|Du|^{q-2} Du)$ in two dimensions with $q\geq 2$ by using a special algebraic structure of the equation. In a recent work 
of the second and the fourth authors \cite{LZ},
the Hessian determinant estimates for the case $1<q<2$ were established by using partial Legendre transform.
The second difficulty, granted that the bounds $0<\lambda\leq\det D^2 u\leq\Lambda<\infty$ have been established, consists in obtaining H\"older estimates for $w$ in the linearized Monge-Amp\`ere equation (\ref{LMA1}), which has no lower order terms on the left-hand side.  This requires certain integrability condition for the right-hand side, as can be seen from the simple equation $\Delta w=f$. In previous works  \cite{CG, GN1, GN2}, classical regularity estimates for linearized Monge-Amp\`ere equation were obtained for $L^n$ right-hand side. This integrability breaks down even in the case $f=- \Delta u$ (where $q=2$, $\bb=0$ and $c=0$), which is a priori at most $L^{1+\e}$ for some small constant $\e(\lambda,\Lambda, n)>0$ (see \cite{DFS, F, Sc}). With the H\"older estimates for the linearized Monge-Amp\`ere equation with $L^{n/2+\e}$ right-hand side in \cite{LN2}, the second author  \cite{LeCPAM} established the solvability of \eqref{Abreu-sin} for the case $f=-\div (|Du|^{q-2} Du)$ in two dimensions with $q\geq 2$.
When $1<q<2$, $f=-\div (|Du|^{q-2} Du)$ becomes more singular in $D^2 u$ and hence it has lower integrability (if any). 
However, in two dimensions, the second and the fourth authors \cite{LZ} solved the second boundary value problem \eqref{Abreu-sin} for
$f=-\div (|Du|^{q-2} Du)+c(x,u)$ for any $q>1$ under suitable assumptions on $c$ and the boundary data. The proof was based on the interior and global H\"older estimates for linearized Monge-Amp\`ere equation with the right-hand being the divergence of a bounded vector field which were established in \cite{LeCMP, LeJMAA}. The solvability of the singular Abreu equations (\ref{Abreu-sin}) in higher dimensions, even the simplest case $f=-\triangle u$, has been widely open. Only some partial results  were obtained in \cite{LeJAM} under either a smallness condition (such as replacing $f=-\Delta u$ by $f=-\delta \Delta u$ for a suitably small constant $\delta>0$) or a radial symmetry condition. 

\subsection{Statements of the main results}
The purpose of this paper is to solve the higher dimensional case of (\ref{Abreu-sin}). 
We will first consider the case that the right-hand side has no drift term $\bb\cdot Du$. This case answers in the affirmative the question raised in \cite[Page 6]{LZ}. In fact, we can establish the solvability for singular Abreu equations that are slightly more general than \eqref{Abreu-sin} where $\div (|Du|^{q-2} Du)$ is now replaced by
$\div (DF(Du))$ for a suitable convex function $F$. Our first main theorem states as follows.

\begin{thm}[Solvability of the second boundary value problem for singular Abreu equations in higher dimensions]
\label{thm:SBV-p-lap}
 Let $\Omega\subset\R^n$ be an open, smooth, bounded and uniformly convex domain.  Let $r>n$. Let $F\in W^{2, r}_{\text{loc}}(\R^n)$ be a convex function.
Assume that $\varphi\in C^{5}(\overline{\Omega})$ and $\psi\in C^{3}(\overline{\Omega})$ with $\min_{\p \Omega}\psi>0$. 
Consider the following second boundary value problem for a uniformly convex function $u$:
\begin{equation}
\label{eq:Abreu-plap}
  \left\{ 
  \begin{alignedat}{2}\sum_{i, j=1}^{n}U^{ij}D_{ij}w~& =-\div(DF(Du))+c(x, u)~&&\text{\ in} ~\ \ \Omega, \\\
 w~&= (\det D^2 u)^{-1}~&&\text{\ in}~\ \ \Omega,\\\
u ~&=\varphi~&&\text{\ on}~\ \ \p \Omega,\\\
w ~&= \psi~&&\text{\ on}~\ \ \p \Omega.
\end{alignedat}
\right.
\end{equation}
Here $(U^{ij})= (\det D^2 u)(D^2 u)^{-1}$, and $c(x, z)\leq 0$.
\begin{enumerate}
\item[(i)]  Assume $c\in C^\alpha(\overline\Omega\times \mathbb R)$ where $\alpha\in (0, 1)$. Then,  there exists a  uniformly convex solution $u\in W^{4, r}(\Omega)$ to \eqref{eq:Abreu-plap} with 
$$\|u\|_{W^{4,r}(\Omega)}\leq C$$
for some $C>0$ depending on $\Omega$, $n$, $\alpha$, $F$, $r$, $c$, $\varphi$ and $\psi$. 

Moreover, if $F\in C^{2,\alpha_0}(\R^n)$ where $\alpha_0\in (0, 1)$, then there exists a  uniformly convex solution $u\in C^{4, \beta}(\overline{\Omega})$ to \eqref{eq:Abreu-plap} where $\beta=\min\{\alpha,\alpha_0\}$ with 
$$\|u\|_{C^{4,\beta}(\overline{\Omega})}\leq C$$
for some $C>0$ depending on $\Omega$, $n$, $\alpha$, $\alpha_0$, $F$, $c$, $\varphi$ and $\psi$.

\item[(ii)]  Assume $c(x,z)\equiv c(x)\in L^p(\Omega)$ with $p>n$ where $c(x)\leq 0$. Then,  for  $s= \min\{r,p\}$, 
there exists a  uniformly convex solution 
$u\in  W^{4, s}(\Omega)$ to \eqref{eq:Abreu-plap} with 
$$\|u\|_{W^{4,s}(\Omega)}\leq C$$
for some  $C>0$  depending on $\Omega$, $n$, $p$, $F$, $r, s$, $\|c\|_{L^p(\Omega)}$, $\varphi$ and $\psi$. 
\end{enumerate}
\end{thm}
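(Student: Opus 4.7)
The plan is to combine uniform a priori $W^{4,s}$ (respectively $C^{4,\beta}$) estimates with a Leray--Schauder continuity argument in the spirit of \cite{Le1, Le2, LZ}. I would embed \eqref{eq:Abreu-plap} in a one-parameter family, say $F_t := tF$ and $c_t := tc$ for $t\in[0,1]$, which connects it at $t=0$ to a classical Abreu problem whose solvability is known by \cite{TW3, Z2}, and reduce the theorem to proving uniform a priori bounds along the family.

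The a priori estimate I would build up in three steps. \emph{Step 1 (one-sided Hessian bound).} Since $F$ is convex and $u$ is uniformly convex, $\mathrm{tr}(D^{2}F(Du)\,D^{2}u)\geq 0$; together with $c\leq 0$ this gives
\begin{equation*}
\sum_{i,j=1}^{n}U^{ij}D_{ij}w \;=\; -\mathrm{tr}(D^{2}F(Du)\,D^{2}u) + c(x,u)\;\leq\; 0 \quad\text{in }\Omega.
\end{equation*}
The minimum principle for the linearized Monge--Amp\`ere operator then forces $w\geq \min_{\partial\Omega}\psi > 0$ in $\Omega$, yielding the uniform upper bound $\det D^{2}u\leq C_{1}$. \emph{Step 2 (Legendre transform and the other Hessian bound).} Let $u^{*}$ be the Legendre transform of $u$ on $\Omega^{*}:=Du(\Omega)$, and set $\tilde{w}(y) := w(x) = \det D^{2}u^{*}(y)$. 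Using the divergence-freeness of the cofactor matrix and the identity $D^{2}u(x)=(D^{2}u^{*}(y))^{-1}$, a direct computation converts \eqref{eq:Abreu-plap} into a linearized Monge--Amp\`ere equation with drift for $\tilde{w}$ on $\Omega^{*}$ of the schematic form
\begin{equation*}
\sum_{k,l=1}^{n}(U^{*})^{kl}(y)\,D_{y_{k}y_{l}}\tilde{w} \;=\; \bb^{*}\cdot D_{y}\tilde{w} + g^{*}(y),
\end{equation*}
where $(U^{*})^{kl}$ is the cofactor matrix of $D^{2}u^{*}$, and $\bb^{*}, g^{*}$ are determined by $DF, D^{2}F$ and $c^{*}(y):=c(Du^{*}(y), u(Du^{*}(y)))$. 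Granted the one-sided bound from Step 1, $\bb^{*}$ and $g^{*}$ lie in the classes required by the global H\"older estimate for linearized Monge--Amp\`ere equations with drift established earlier in this paper, which gives a uniform $C^{\alpha}(\overline{\Omega^{*}})$ bound on $\tilde{w}$. Transferring back via $Du$ yields $w\in C^{\alpha}(\overline{\Omega})$ and the complementary bound $\det D^{2}u\geq c_{2}>0$. \emph{Step 3 (bootstrap).} With $w\in C^{\alpha}$ bounded between positive constants, Caffarelli's global $W^{2,p}$/$C^{2,\alpha}$ regularity for $\det D^{2}u = 1/w$ with $u|_{\partial\Omega}=\varphi$ gives $u\in C^{2,\alpha}(\overline{\Omega})$, so the linearized operator has uniformly elliptic, H\"older continuous coefficients. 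Schauder theory (case (i)) or Calder\'on--Zygmund theory (case (ii)) applied to the linear equation $\sum U^{ij}D_{ij}w = -\mathrm{div}(DF(Du))+c(x,u)$ then upgrades $w$ to $C^{2,\beta}$ or $W^{2,s}$, and one more application of the Monge--Amp\`ere regularity produces the desired $u\in C^{4,\beta}$ or $u\in W^{4,s}$ bound.

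The main obstacle is Step 2. The singular right-hand side $\mathrm{div}(DF(Du))$ is in general only the divergence of an $L^{\infty}$ vector field, which is too rough for the classical linearized Monge--Amp\`ere H\"older theory of \cite{CG, GN1, GN2, LN2}. The Legendre transform reorganizes this singular divergence into a bounded first-order drift acting on the dual variable $\tilde{w}$, and the paper's new H\"older estimate for such drift equations then closes the estimate in arbitrary dimension, removing the dimensional restriction of \cite{LeCPAM, LZ}. Once the a priori bound is uniform along the continuity family, Leray--Schauder degree theory delivers the solution, proving (i) and (ii) simultaneously.
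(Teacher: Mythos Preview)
Your Step 2 has a genuine circularity. You propose to apply the paper's global H\"older estimate for linearized Monge--Amp\`ere equations with drift on the Legendre-transformed domain $\Omega^{*}=Du(\Omega)$, but that estimate (Theorem \ref{LMA-G}) requires \emph{both} bounds $\lambda\le \det D^{2}u^{*}\le \Lambda$, a uniformly convex $C^{3}$ domain, $u^{*}|_{\partial\Omega^{*}}\in C^{3}$, and already-established pointwise boundary H\"older continuity. After Step 1 you only have $\det D^{2}u\le C_{1}$, i.e.\ a \emph{lower} bound on $\det D^{2}u^{*}$; the missing upper bound on $\det D^{2}u^{*}$ is exactly the lower bound on $\det D^{2}u$ you are trying to prove. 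You also have no a priori control on the uniform convexity or $C^{3}$ regularity of $\partial\Omega^{*}$. A second issue: the dual equation of Proposition \ref{new2} is $u^{*ij}D_{ij}(w^{*}+F(y))=c$ with $w^{*}=\log\det D^{2}u^{*}=\log\tilde w$; rewriting it for $\tilde w$ introduces a quadratic term $U^{*ij}D_{i}\tilde w\,D_{j}\tilde w/\tilde w^{2}$, so it is not a linear drift equation of the schematic form you wrote.

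The paper decouples the two tasks you merged. The lower determinant bound (Lemma \ref{lem:det-est-plap}) comes from a bare ABP maximum principle applied to $u^{*ij}D_{ij}(w^{*}+F(y))=c$ on $\Omega^{*}$: the change of variables $y=Du(x)$ makes the factor $\det D^{2}u^{*}$ in the ABP integral cancel, so no two-sided bound is needed there. Only \emph{after} both determinant bounds are in hand does the paper prove $w\in C^{\beta}(\overline{\Omega})$, and it does this on the \emph{original} domain $\Omega$ via the key transformation you did not use: $\eta=we^{F(Du)}$ satisfies a linearized Monge--Amp\`ere equation with bounded H\"older drift (Lemma \ref{low_lem}), and for each $x_{0}\in\partial\Omega$ the modified function $\eta^{x_{0}}=we^{F(Du)-DF(Du(x_{0}))\cdot(Du-Du(x_{0}))-F(Du(x_{0}))}$ has drift vanishing at $x_{0}$, which feeds into Proposition \ref{ptwH} for the pointwise boundary estimate (Theorem \ref{wHolder_thm}). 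Your Legendre-side strategy misses this mechanism and cannot be closed as stated.
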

We will prove Theorem \ref{thm:SBV-p-lap} in Section \ref{SBV-plap-pf}.

We also discuss the solvability and regularity estimates of (\ref{Abreu-sin}) in the case that the right-hand side has  more general lower order terms
 and no sign restriction on $c$.  We mainly focus on the most typical case that the right-hand side has a Laplace term:
\begin{equation}
\label{Abreu-lap}
  \left\{ 
  \begin{alignedat}{2}\sum_{i, j=1}^{n}U^{ij}D_{ij}w~& =-\triangle u+ \bb\cdot Du+c(x,u)~&&\text{\ in} ~\ \ \Omega, \\\
 w~&= (\det D^2 u)^{-1}~&&\text{\ in}~\ \ \Omega,\\\
u ~&=\varphi~&&\text{\ on}~\ \ \p \Omega,\\\
w ~&= \psi~&&\text{\ on}~\ \ \p \Omega.
\end{alignedat}
\right.
\end{equation}
Here, $(U^{ij})= (\det D^2 u)(D^2 u)^{-1}$. 
Our second main result is the following theorem. 
 \begin{thm}[Solvability of the second boundary value problem for singular Abreu equations with lower order terms in high dimensions]
\label{SBV-1}
Let $\Omega\subset\R^n$($n\geq 3$) be an open, smooth, bounded and uniformly convex domain.  
Assume that $\varphi\in C^{5}(\overline{\Omega})$ and $\psi\in C^{3}(\overline{\Omega})$ with $\min_{\p \Omega}\psi>0$.
Consider  the 
second boundary value problem \eqref{Abreu-lap} with $c(x,z)\equiv c(x)$.
\begin{enumerate}
\item[(i)] If $\bb\in C^\alpha(\overline\Omega;\R^n)$ and $c\in C^\alpha(\overline\Omega)$ where $\alpha\in (0, 1)$, then there exists a  uniformly convex solution $u\in C^{4, \alpha}(\overline{\Omega})$ to \eqref{Abreu-lap} with 
$$\|u\|_{C^{4,\alpha}(\overline{\Omega})}\leq C$$
for some $C>0$ depending on $\Omega$, $n$, $\alpha$, $\|\bb\|_{C^\alpha(\overline\Omega)}$, $\|c\|_{C^\alpha(\overline\Omega)}$, $\varphi$ and $\psi$. 
\item[(ii)] If $\bb\in L^\infty(\Omega;\R^n)$ and $c\in L^p(\Omega)$ with $p>2n$, then there exists a  uniformly convex solution $u\in W^{4, p}(\Omega)$ to \eqref{Abreu-lap} with 
$$\|u\|_{W^{4,p}(\Omega)}\leq C$$
for some $C>0$  depending on $\Omega$, $n$, $p$, $\|\bb\|_{L^\infty(\Omega)}$, $\|c\|_{L^p(\Omega)}$, $\varphi$ and $\psi$.
\end{enumerate}
\end{thm}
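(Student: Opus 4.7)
The plan is to follow the scheme used for Theorem~\ref{thm:SBV-p-lap}: reduce existence to uniform a priori $C^{4,\alpha}(\bom)$ (resp.~$W^{4,p}(\Om)$) estimates along a continuity family that connects \eqref{Abreu-lap} to a solvable base case (the standard second boundary value problem for the Abreu equation with zero right-hand side and the prescribed boundary data $\varphi, \psi$), and then invoke the Leray--Schauder degree as in \cite{Le1} and Section~\ref{SBV-plap-pf}.

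The core of the proof is the a priori estimates. I would first decouple the fourth-order system into the Monge-Amp\`ere equation $\det D^2 u = 1/w$ and the second-order equation
\begin{equation*}
\sum_{i,j=1}^n U^{ij} D_{ij} w = -\triangle u + \bb \cdot Du + c(x)
\end{equation*}
for $w$. Writing $-\triangle u = -\div(Du)$ and using the divergence-free property of the cofactor matrix $(U^{ij})$, one can absorb the Laplacian term into the principal part, recasting the equation for $w$ as a linearized Monge-Amp\`ere equation with a drift built from $\bb$ and derivatives of $u$. This is precisely the setup to which the paper's global H\"older and $W^{2,p}$ theory for linearized Monge-Amp\`ere equations with drifts applies. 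Assuming the two-sided Hessian determinant bound $0 < \lam \le \det D^2 u \le \Lambda$---whose upper half follows from a boundary comparison argument using $\psi > 0$ on $\pom$, and whose lower half uses an ABP-type estimate for the drift-LMA operator together with the new global H\"older estimates---one obtains H\"older (resp.~$W^{2,p}$) regularity of $w$. The desired $C^{4,\alpha}$ (resp.~$W^{4,p}$) control of $u$ then follows by Schauder (resp.~Calder\'on--Zygmund) bootstrapping through the Monge-Amp\`ere equation $\det D^2 u = 1/w$.

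The main obstacle is handling the unsigned drift $\bb \cdot Du$ and the non-sign-definite $c$. In Theorem~\ref{thm:SBV-p-lap} the assumption $c \le 0$ was used to obtain one-sided maximum-principle bounds in the Hessian determinant estimate; here no such sign is available and the drift is new. The H\"older regularity of $\bb, c$ in part (i) suffices to close the Schauder bootstrap once $w$ is controlled in $C^\alpha$. In part (ii), the strengthened assumption $p > 2n$ (as opposed to $p > n$ in Theorem~\ref{thm:SBV-p-lap}(ii)) compensates for the loss of sign on $c$: when the Sobolev bounds on $Du$ available for $u \in W^{4,p}$ are combined with $\bb \in L^\infty$ and $c \in L^p$, the right-hand side of the drift-LMA equation still falls into an integrability class permitting the $W^{2,p}$ theory for LMA with drifts to close the estimate. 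Once these a priori bounds are in place, the degree-theoretic step from Section~\ref{SBV-plap-pf} carries over with only minor modifications needed to accommodate the drift and the sign-changing $c$.
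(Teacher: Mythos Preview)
Your overall scheme---a priori estimates feeding into degree theory, with the LMA-with-drift reformulation playing the central role in the H\"older step---matches the paper's architecture. However, the proposal has genuine gaps in the Hessian determinant estimates, which is precisely where the sign-free $c$ and the drift make things delicate.

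First, the claim that the upper bound $\det D^2 u \le \Lambda$ ``follows from a boundary comparison argument using $\psi > 0$ on $\pom$'' is incorrect here. That argument requires $U^{ij}D_{ij}w \le 0$ so that the minimum of $w$ is attained on $\pom$; without $c \le 0$ and with the drift $\bb\cdot Du$ present, the right-hand side has no sign and the maximum principle does not apply. The paper instead uses the Chau--Weinkove barrier $G = (\det D^2 u)\, e^{Mu^2}$ with $M \ge \tfrac14\|\bb\|_{L^\infty}^2$, computes $u^{ij}D_{ij}G$ from below, and applies ABP. This computation is exactly where the condition $p>2n$ enters: in the ABP integral one must control $d^{1+1/n}(c - 2Mnu - \tfrac{n}{2}d^{1/n})^+$ in $L^n$, and absorbing a power of $d$ into $(\sup G)^\gamma$ with $\gamma<1$ forces a choice of exponent $p_0$ with $2n < p_0 \le p$. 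Your explanation that $p>2n$ is needed to put the right-hand side of the LMA equation into a good integrability class via ``Sobolev bounds on $Du$ available for $u\in W^{4,p}$'' is circular: those are the very bounds you are trying to establish.

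Second, the Chau--Weinkove step above requires an a priori bound on $\|u\|_{L^\infty(\Omega)}$, which you do not address. The paper proves this separately (Lemma~\ref{lem:bound-u-high}) via an estimate on $\int_{\pom}(u_\nu^+)^n\,dS$, and this is precisely where the hypothesis $n\ge 3$ is used: the drift contribution produces a term of order $(\int_{\pom}(u_\nu^+)^n\,dS)^{2/n}$, which can be absorbed into the left-hand side only when $2/n<1$. After the upper determinant bound and the resulting gradient bound, the lower determinant bound is obtained via the Legendre transform and an ABP estimate on the dual equation---not via the drift-LMA H\"older theory as you suggest. Once both determinant bounds are in hand, your description of the remaining steps (Theorem~\ref{wHolder_thm} with $F(x)=|x|^2/2$, then Schauder/Calder\'on--Zygmund bootstrap, then degree theory) is correct.
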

We will prove Theorem \ref{SBV-1} in Section 
\ref{SBV-1-pf}. Furthermore, in two dimensions, when $\|\bb\|_{L^\infty(\Omega)}$ is small, depending on $\Omega, \psi$ and $\psi$, the conclusions of Theorem \ref{SBV-1} still hold; see Remark \ref{SBV-1-rem}.

The lack of non-positivity of $c$ in (\ref{Abreu-lap}) can raise more difficulties in the $L^\infty$-estimate and the use of Legendre transform in the Hessian determinant estimates. Compared to the weakest assumption  $c\in L^{p}(\Omega)$ with $p>n$ in \cite{Le2},
we need $p>2n$ in Theorem \ref{SBV-1}$(ii)$. However, in two dimensions, this assumption can be weakened provided stronger conditions on $\bb$ are imposed, but  $\|\bb\|_{L^\infty(\Omega)}$ can be arbitrarily large. This is the content of our final main result.

\begin{thm} [Solvability of the second boundary value problem for singular Abreu equations with lower order terms in two dimensions]
\label{SBV-2}
Let $\Omega\subset\R^2$ be an open, smooth, bounded and uniformly convex domain.  
Assume that $\varphi\in C^{5}(\overline{\Omega})$ and $\psi\in C^{3}(\overline{\Omega})$ with $\min_{\p \Omega}\psi>0$.
Consider  the 
second boundary value problem \eqref{Abreu-lap}.
Assume that $\bb\in C^1(\overline{\Omega};\R^n)$ with $\div(\bb)\leq \frac{32}{diam(\Omega)^2}$, and $c(x,z)\equiv c(x)\in L^p(\Omega)$ with $p>2$. Then there exists a  uniformly convex solution $u\in W^{4, p}(\Omega)$ to \eqref{Abreu-lap} with 
$$\|u\|_{W^{4,p}(\Omega)}\leq C$$
for some $C>0$  depending on $\Omega$, $p$, $\bb$, $\|c\|_{L^p(\Omega)}$, $\varphi$ and $\psi$.
\end{thm}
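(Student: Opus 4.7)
The strategy follows the blueprint used for Theorems~\ref{thm:SBV-p-lap} and~\ref{SBV-1}: existence is deduced from the Leray--Schauder degree applied to a continuity family connecting \eqref{Abreu-lap} to the pure Abreu problem ($\bb\equiv 0$, $c\equiv 0$, same boundary data), so the entire task reduces to establishing uniform a priori $W^{4,p}$ bounds for uniformly convex solutions. These bounds split, as described in Section~\ref{pre_res}, into (a) $L^\infty$ control of $u$ and $Du$ (immediate from convexity together with the $C^5$ boundary data on a uniformly convex domain), (b) two-sided Hessian determinant bounds $0<\lambda\le\det D^2 u\le\Lambda$, (c) a global H\"older estimate for $w=1/\det D^2u$, and (d) linear elliptic bootstrap. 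What is new compared with Theorem~\ref{SBV-1} is that in two dimensions the integrability threshold on $c$ can be lowered from $p>2n=4$ to $p>n=2$, at the cost of imposing a divergence bound on $\bb$.

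The core of the proof is step~(b). Following the Legendre-transform methodology of \cite{LZ}, I would pass to $u^\ast(y)=\sup_{x\in\Omega}\bigl(x\cdot y - u(x)\bigr)$ on the convex image $\Omega^\ast=Du(\Omega)$. Using $\det D^2u^\ast=1/\det D^2u$ and the two-dimensional identity $\triangle u(x)=\triangle u^\ast(y)/\det D^2u^\ast(y)$ with $y=Du(x)$, equation \eqref{Abreu-lap} recasts as an Abreu-type equation for $u^\ast$ on $\Omega^\ast$ whose right-hand side contains the transformed drift $\bb(Du^\ast(y))\cdot y$. Testing this transformed equation against $u^\ast$ minus its affine boundary extension and integrating by parts produces a coercive Dirichlet-type quantity on the left; the drift contribution, after transferring one derivative onto $\bb$ and changing variables back to $\Omega$, reduces to an integral controlled by a constant times $\int_\Omega \div(\bb)\,(u-\text{affine})^2$. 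Combining with a two-dimensional Poincar\'e-type inequality on the convex set $\Omega$, the hypothesis $\div(\bb)\le 32/\diam(\Omega)^2$ is precisely tuned to the constant arising in this absorption, closing the estimate and yielding the upper bound on $\det D^2u$. The lower bound then follows from the linearized Monge--Amp\`ere maximum principle applied to $w$, using $\min_{\p\Omega}\psi>0$ and the now-controlled right-hand side.

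Once step~(b) is in hand, step~(c) proceeds as in Theorem~\ref{SBV-1}, using the identity
\[
-\triangle u + \bb\cdot Du \;=\; -\div(Du) + \div(u\,\bb) - u\,\div(\bb),
\]
which, since $u,Du\in L^\infty$ and $\bb\in C^1$, exhibits the first-order part of the source as the divergence of a bounded vector field plus an $L^p$ zeroth-order term with $p>2$. The global H\"older estimates for linearized Monge--Amp\`ere equations with divergence-form right-hand side and $L^p$ non-divergence part with $p>n=2$ established in \cite{LeCMP, LeJMAA} then yield $w\in C^\beta(\overline{\Omega})$ for some $\beta\in(0,1)$. Step~(d) is standard: Caffarelli--Guti\'errez $W^{2,p}$ theory for $\det D^2 u=1/w$, followed by differentiation of this relation and linear elliptic regularity, delivers $u\in W^{4,p}(\Omega)$ with the asserted bound.

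The main obstacle I anticipate lies in step~(b): one must arrange the Legendre-dual integration by parts so that the drift contributes \emph{precisely} an integral against $\div(\bb)$ of the square of a function vanishing on $\p\Omega$, with a clean constant that matches the Poincar\'e constant to within the threshold $32/\diam(\Omega)^2$. Without a sign assumption on $c$, the pointwise methods available in the $c\le 0$ regime of Theorem~\ref{SBV-1} are unavailable, and since $\bb$ is allowed to be arbitrarily large in $L^\infty$, the drift cannot be absorbed pointwise either; only the integral absorption via $\div(\bb)$ works, which is why the divergence bound is the natural and essentially unavoidable hypothesis for the method.
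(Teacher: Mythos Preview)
Your overall scaffolding (degree theory, reduction to a priori bounds, bootstrap) is right, and your step~(c) via the divergence-form rewriting and \cite{LeCMP,LeJMAA} would work, though the paper instead uses the $W^{2,1+\varepsilon}$ estimate for the Monge--Amp\`ere equation to put $-\Delta u$ in $L^{1+\varepsilon}$ and then invokes \cite{LN2} with right-hand side in $L^q$, $q>n/2=1$. But there is a genuine structural gap in steps~(a) and~(b): you have placed the role of the hypothesis $\div\bb\le 32/\diam(\Omega)^2$ in the wrong step.

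The $L^\infty$ bound on $u$ is \emph{not} immediate from convexity and boundary data. Convexity gives $u\le\max_{\p\Omega}\varphi$, but not a lower bound; without an upper bound on $\det D^2u$ one cannot construct barriers, and without $\|u\|_{L^\infty}$ one cannot bound $\|Du\|_{L^2}$. In the paper this is exactly where the divergence condition enters: one bounds $\int_{\p\Omega}(u_\nu^+)^2\,dS$ via the integral identity from \cite{LeCPAM}, and the drift term $\int_\Omega -(\bb\cdot Du)u\,dx=\tfrac12\int_\Omega(\div\bb)u^2\,dx-\tfrac12\int_{\p\Omega}\varphi^2\bb\cdot\nu\,dS$ is absorbed using the Poincar\'e-type inequality $\int_\Omega u^2\le C\|u\|_{L^\infty}+\tfrac{\diam(\Omega)^2}{16}\int_\Omega|Du|^2$. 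The threshold $32/\diam(\Omega)^2$ is tuned so that $\tfrac12\cdot\tfrac{32}{\diam(\Omega)^2}\cdot\tfrac{\diam(\Omega)^2}{16}=1$ exactly matches the $-\int_\Omega|Du|^2$ coming from $\int_\Omega u\Delta u$. Once $\|u\|_{L^\infty}$ and $\|Du\|_{L^2}$ are in hand, the determinant bounds follow without further use of $\div\bb$: the two-dimensional identity $\trace\,U=\Delta u$ rewrites the equation as $U^{ij}D_{ij}(w+|x|^2/2)=\bb\cdot Du+c=:Q\in L^2$, and ABP gives $\sup w\le C+C(\sup w)^{1/2}$, hence the lower bound for $\det D^2u$; the upper bound then comes from Legendre duality and ABP on $\Omega^\ast$. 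Your proposed Legendre-side testing argument for the determinant is vague as written, and your ``maximum principle for $w$'' does not apply since the right-hand side has no sign.
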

The proof of Theorem \ref{SBV-2} will be given in Section
\ref{SBV-2-pf}.
\begin{rem} Some remarks are in order.
\begin{enumerate}
\item[(1)] Theorem \ref{thm:SBV-p-lap} applies to all convex functions  $F(x)=|x|^q/q$ $(q>1)$ on $\R^n$ for which (\ref{eq:Abreu-plap}) becomes (\ref{Abreu-sin}) when $\bb=0$. Note that, if $1<q<2$, then $|x|^q\in W^{2, r}_{\text{loc}}(\R^n)$ for all $n<r<n/(2-q)$, while if $q\geq 2$, we have $|x|^q\in W^{2, r}_{\text{loc}}(\R^n)$ for all $r>n$.
\item[(2)]
By the Sobolev embedding theorem, the solutions $u$ obtained in our main results at least belong to $C^{3,\beta}(\overline{\Omega})$ for some $\beta>0$. 
\item [(3)] The condition $\div(\bb)\leq \frac{32}{diam(\Omega)^2}$ in Theorem \ref{SBV-2}  is due to the method of its proof  in obtaining a priori $L^{\infty}$ estimates that uses a Poincar\'e type inequality on planar convex domains in Lemma \ref{lem:Poincare}.
\end{enumerate}
\end{rem}

\begin{rem} We briefly relate the hypotheses in our existence results to concrete examples in applications. 
\begin{enumerate}
\item[(1)]Theorem \ref{thm:SBV-p-lap} applies to the approximation problem of the variational problem
\begin{equation}
\label{app-rem}
 \mathrm{inf}\int_{\Omega} F_0(x, u(x), Du(x)) \,dx\end{equation}
among certain classes of convex competitors, say, with the same boundary value $\varphi$ on $\p\Omega$,  where $F_0(x, z, \mathbf{p})= F( \mathbf{p}) + \hat F(x, z)$ with $F$ being convex and
$c(x, z)\equiv \frac{\p \hat F}{\p z} (x, z)\leq 0$.  The case $F\equiv 0$ is applicable. One particular example is $F_0(x, z, \mathbf{p})=\hat F(x, z)=  \big(|x|^2/2-z\big)\det D^2 v(x)$ where $v$ is a given function, which arises in wrinkling patterns in floating elastic shells in elasticity \cite{T}.

\item[(2)]Consider now $F_0(x, z, \mathbf{p})=\hat F(x, z)$. Denote $c(x, z)\equiv \frac{\p \hat F}{\p z} (x, z)$.
We note that without the condition $c(x, z)\leq 0$, \eqref{app-rem} might not have a minimizer. (For example, if $\hat F(x, z)= z^3$ so $c(x, z)=3z^2\geq 0$, then the infimum value of  \eqref{app-rem} is $-\infty$ if $\varphi\not\equiv 0$.)
On the other hand, when 
the assumption $c(x, z)\leq 0$ holds, a solution to \eqref{app-rem} always exists: One solution is the maximal convex extension of $\varphi$ from $\p\Omega$ to $\Omega$. The existence results in Theorem  \ref{thm:SBV-p-lap} imply that when $\hat F(x, z)$ is perturbed by convex functions of $Du$ (such as F(Du) where F is convex) and $\det D^2 u$ (such as $-\log\det D^2 u$), critical points of the resulting functionals, under appropriate boundary conditions, always exist, and this heuristically means that 
the resulting functionals continue to have minimizers.

\item[(3)]Theorem \ref{SBV-1} applies to  \eqref{Abreu-lap} with right-hand side $-\Delta u + 1$. This expression arises from the 
  Rochet-Chon\'e model with quadratic cost $F_0(x,z,\mathbf{p})=|\mathbf{p}|^2/2-x\cdot \mathbf{p}+z$, due to
 \[-\sum_{i=1}^n \frac{\p}{\p x_i} \big(\frac{\p F_0}{\p p_i} (x, u, Du)\big) +  \frac{\p F_0}{\p z}(x, u, Du)=-\Delta u + 1.\]
 \end{enumerate}

\end{rem}

\begin{rem}
Given our existence results concerning  \eqref{Abreu-sin}, one might wonder if the solutions found are unique. In general, for the fourth-order equations, 
we can not obtain the uniqueness of solutions by using the comparison principle.  However, for equations of the type \eqref{Abreu-sin}, we can obtain uniqueness in some special cases by exploring their very particular structures, using integral methods, and taking into account the concavity of the operator $\log\det D^2 u$ and the convexity of $|x|^q/q$ $(q>1)$ or $F$ in general. For example, we can infer from the arguments in 
 \cite[Lemma 4.5]{LeCPAM} that the uniqueness of  \eqref{Abreu-sin} holds when ${\bf b}\equiv 0$ and $c(x, z)$ satisfies the following monotonicity condition:
 \[(c(x, z)- c(x, \tilde z)) (z-\tilde z)\geq 0\quad\text{for all } x\in\overline{\Omega}\quad\text{and}\quad z, \tilde z\in\R.\]
 In particular, this implies that the solutions in Theorem \ref{thm:SBV-p-lap} (ii) are unique, and the solutions in Theorems \ref{SBV-1} and \ref{SBV-2} are unique provided that ${\bf b}\equiv 0$. To the best of our knowledge, the uniqueness for 
  \eqref{Abreu-sin} when ${\bf b}\neq 0$ is an interesting open issue.
\end{rem}

\subsection{On the proofs of the main results} Let us now say a few words about the proofs of our main results using a priori estimates and degree theory.  We focus on the most crucial point that overcomes the obstacles encountered in previous works: obtaining the a priori H\"older estimate for $w=(\det D^2 u)^{-1}$ in higher dimensions, once the Hessian determinant bounds on $u$ have been obtained. 
In this case, global H\"older estimates for $Du$ follow. Here, we use a new equivalent form (see Lemma \ref{low_lem}) for the singular Abreu equation to deal with the difficulties mentioned in Section \ref{pre_res}. 
In particular, in Theorem \ref{thm:SBV-p-lap}, instead of establishing the H\"older estimate for $w$, we establish the H\"older estimate for $\eta= w e^{F(Du)}$.  The key observation is that $\eta$ solves a linearized Monge-Amp\`ere equation with a drift term in which the very singular term \[\div (DF(Du))=\text{trace}(D^2 F(Du)D^2 u)\] no longer appears. 
Thus, the proof of Theorem \ref{thm:SBV-p-lap} reduces the global higher order derivative estimates for (\ref{eq:Abreu-plap}) to the global H\"older estimates of linearized Monge-Amp\`ere equations with drift terms.  To the best of the authors' knowledge, these  global H\"older estimates with full generality are not available in the literature. In the case of Theorems  \ref{SBV-1} and \ref{SBV-2}, the drift terms are also H\"older continuous. However, they do not vanish on the boundary and this seems to be difficult to prove H\"older estimates for $\eta$ at the boundary, not to 
mention global H\"older estimates. We overcome this difficulty by observing that {\it each of our singular Abreu equation is in fact equivalent to a family of linearized Monge-Amp\`ere equations with drifts}. In particular, at each boundary point $x_0$, 
\[\eta^{x_0}(x)= w(x) e^{F(Du(x))-DF(Du(x_0))\cdot (Du(x)-Du(x_0))-F(Du(x_0))}\]
solves a linearized Monge-Amp\`ere equation with a H\"older continuous drift that vanishes at $x_0$. This gives pointwise H\"older estimates for $\eta^{x_0}$ (and hence for $\eta$) at $x_0$. Combining this with interior H\"older estimates for linearized Monge-Amp\`ere equations with bounded drifts, we obtain the  global H\"older estimates for $\eta$ and hence for $w$. Section \ref{LOT} will discuss all these in detail.

For reader's convenience, we recall the following notion of pointwise H\"older continuity.
\begin{defi}[Pointwise H\"older continuity] A continuous function $v\in C(\overline{\Omega})$
 is said to be pointwise $C^\alpha$ ($0<\alpha<1$) at a boundary point $x_0\in\p\Omega$, if there exist constants $\delta, M>0$ such that
 \[|v(x)- v(x_0)|\leq M|x-x_0|^\alpha\quad\text{for all } x\in \Omega\cap B_\delta(x_0).\]
 \end{defi}
Throughout, we use the convention that repeated indices are summed.
\vskip 15pt
The rest of the paper is organized as follows. In Section \ref{diff}, we establish a new equivalent form for the singular Abreu equations which transform them into linearized Monge-Amp\`ere equations with drift terms, and the dual equations under Legendre transform. The global H\"older
estimates for the linearized Monge-Amp\`ere equation with drift terms, under suitable hypotheses, will be addressed in Section \ref{LOT}. With these estimates, we can prove Theorem \ref{thm:SBV-p-lap} in Section \ref{SBV-plap-pf}. The proofs of Theorems \ref{SBV-1} and \ref{SBV-2} will be given in Sections \ref{SBV-1-pf}, and \ref{SBV-2-pf}, respectively. In the final Section \ref{rem_sect}, we discuss (\ref{Abreu-sin})  with more general lower order terms,
and present a proof of Theorem \ref{LMA-G} on global H\"older estimates for solutions to the  linearized Monge-Amp\`ere equation with a drift term that are pointwise H\"older continuous at the boundary.

\vskip 30pt

\section[Equivalent forms of singular Abreu equations]{Equivalent forms of the singular Abreu equations}
\label{diff}

In this section, we derive some equivalent forms for the following general singular Abreu equations:
\begin{equation}\label{Abreu-plaplace}
\left\{
 \begin{alignedat}{2}
U^{ij} D_{ij} w&=- \div (DF(Du)) + Q(x, u, Du), ~&&\text{\ in} ~\ \ \Omega, \\\
 w~&= (\det D^2 u)^{-1}~&&\text{\ in}~\ \ \Omega,
 \end{alignedat}
\right.
\end{equation}
where $U=(U^{ij})= (\det D^2 u) (D^2 u)^{-1}$, $F\in W^{2, n}_{\text{loc}}(\R^n)$, and $Q$ is a function on $\mathbb R^n\times \mathbb R\times \mathbb R^n$. 

\subsection{Singular Abreu equations and linearized Monge-Amp\`ere equations with drifts}
Our key observation is the following lemma.
\begin{lem}[Equivalence of singular Abreu equations and linearized Monge-Amp\`ere equations with drifts]
\label{low_lem} Assume that a locally uniformly convex function $u\in W^{4, s}_{\text{loc}}(\Omega)$ ($s>n$) solves (\ref{Abreu-plaplace}).
Then 
\[\eta = w e^{F(Du)}\]
satisfies 
\begin{equation}\label{eq:F(p)-eq-n}
U^{ij}D_{ij} \eta -(\det D^2u) DF(Du)\cdot D \eta =e^{F(Du)}Q(x, u, Du).
\end{equation} 
\end{lem}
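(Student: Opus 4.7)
The identity \eqref{eq:F(p)-eq-n} is really an algebraic manipulation of the PDE for $w$, so the plan is a direct pointwise computation. The hypothesis $u\in W^{4,s}_{\text{loc}}(\Omega)$ with $s>n$ guarantees, via Sobolev embedding, that $Du\in C^{2,\alpha}_{\text{loc}}$ and $\det D^2u\in C^{1,\alpha}_{\text{loc}}$, so $w=(\det D^2u)^{-1}$ is $C^{1,\alpha}_{\text{loc}}$ with $D^2w$ existing a.e.\ in $L^s_{\text{loc}}$; similarly $\eta=we^{F(Du)}$ is differentiable twice in the same sense because $F\in W^{2,n}_{\text{loc}}$ is composed with the $C^{2,\alpha}$ map $Du$. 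Thus every equality below will be understood pointwise a.e., which is enough since both sides of \eqref{eq:F(p)-eq-n} belong to $L^{s}_{\text{loc}}$.

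Set $G:=e^{F(Du)}$, so that $\eta=wG$. The chain rule gives $D_j G=G\, F_k(Du)\,u_{kj}$ and
\begin{equation*}
D_{ij}G=G\,F_k(Du)F_l(Du)u_{ki}u_{lj}+G\,F_{kl}(Du)u_{ki}u_{lj}+G\,F_k(Du)u_{ijk}.
\end{equation*}
Two algebraic identities for the cofactor matrix $U$ will carry the whole computation:
\begin{equation*}
U^{ij}u_{jk}=(\det D^2u)\,\delta_{ik},\qquad U^{ij}u_{ijk}=\partial_k(\det D^2u).
\end{equation*}
The first is the definition of the cofactor matrix; the second is the Jacobi formula for the derivative of the determinant. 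From them one extracts, after a short computation,
\begin{equation*}
U^{ij}D_iw\,D_jG=G(\det D^2u)\,DF(Du)\cdot Dw,
\end{equation*}
\begin{equation*}
U^{ij}D_{ij}G=(\det D^2u)\,DF(Du)\cdot DG+G(\det D^2u)\,\mathrm{tr}(D^2F(Du)D^2u)+G\,DF(Du)\cdot D(\det D^2u).
\end{equation*}

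Now expand the target expression using $\eta=wG$, the product rule for $U^{ij}D_{ij}$, and the formulas above. In addition, since $w=(\det D^2u)^{-1}$ one has $w\,D(\det D^2u)=-(\det D^2u)\,Dw$, which converts the last term of $U^{ij}D_{ij}G$, after multiplication by $w$, into $-G(\det D^2u)DF(Du)\cdot Dw$. Also
\begin{equation*}
(\det D^2u)\,DF(Du)\cdot D\eta=G(\det D^2u)\,DF(Du)\cdot Dw+DF(Du)\cdot DG,
\end{equation*}
using $w(\det D^2u)=1$. Collecting everything,
\begin{equation*}
U^{ij}D_{ij}\eta-(\det D^2u)DF(Du)\cdot D\eta=G\bigl(U^{ij}D_{ij}w+\mathrm{tr}(D^2F(Du)D^2u)\bigr),
\end{equation*}
all the mixed-gradient terms cancelling in pairs.

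The final step is to identify the bracket with $Q(x,u,Du)$. Because $Du\in C^{2,\alpha}_{\text{loc}}$, the chain rule gives $\mathrm{div}(DF(Du))=F_{kl}(Du)u_{kl}=\mathrm{tr}(D^2F(Du)D^2u)$ a.e., so \eqref{Abreu-plaplace} reads $U^{ij}D_{ij}w+\mathrm{tr}(D^2F(Du)D^2u)=Q(x,u,Du)$. Substituting this into the previous display yields \eqref{eq:F(p)-eq-n}. The only point that requires caution is interpreting $\mathrm{tr}(D^2F(Du)D^2u)$ when $F$ is merely in $W^{2,n}_{\text{loc}}$; the $C^{2,\alpha}$ regularity of $Du$ makes the composition well-defined and locally integrable, and the computation goes through by a standard approximation of $F$ by smooth convex functions. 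This is the only mildly delicate point; the rest is bookkeeping.
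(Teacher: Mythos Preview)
Your proof is correct. The computation via the product rule and the Jacobi identity $U^{ij}u_{ijk}=\partial_k(\det D^2u)$ goes through exactly as you indicate, and the cancellations are as stated.

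The paper takes a somewhat different route. Rather than expanding $U^{ij}D_{ij}\eta$ directly, it first passes through the logarithmic variable $\zeta:=\log\det D^2u-F(Du)=-\log\eta$ and observes that the original equation can be rewritten in divergence form as $D_j(u^{ij}D_i\zeta)=-Q$, where $(u^{ij})=(D^2u)^{-1}$. This uses the identity $U^{ij}D_{ij}w=D_j(u^{ij}D_i\log w)$ together with $D_j(u^{ij}D_i(F(Du)))=\operatorname{div}(DF(Du))$. From there, substituting $D_i\zeta=-e^{-F(Du)}(\det D^2u)D_i\eta$ and using that $(U^{ij})$ is divergence-free unwinds the divergence form back into \eqref{eq:F(p)-eq-n}. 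The paper's route highlights a structural fact---the singular term is absorbed into a clean divergence-form equation for $\zeta$---which is noted separately in the paper and connects to the variational origin of the equation. Your brute-force expansion is more elementary and avoids introducing $\zeta$, at the cost of not exposing that intermediate structure; either argument is adequate for the lemma itself.
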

\begin{proof} Let $(u^{ij})= (D^2 u)^{-1} = wU$.
By computations using
$D_j U^{ij}= 0$ and $w=(\det D^2u)^{-1}$, we have
\[
U^{ij}D_{ij} w=D_j(U^{ij}D_i w)=D_j(u^{ij}D_i(\log w))=-D_j(u^{ij}D_i(\log\det D^2u))
\]
and
\beq\label{q-la}
D_j\left[u^{ij} D_i \left(F(Du)\right)\right]=\div (DF(Du)).
\eeq
It follows that equation \eqref{Abreu-plaplace} can be written as
\begin{equation}
\label{zetaQ}
D_j(u^{ij}D_i\zeta)=-Q(x, u, Du)
\end{equation}
where
 \begin{equation*}
 \zeta=\log\det D^2u-F(Du).
 \end{equation*}
 In other words,  in \eqref{Abreu-plaplace}, the singular term \[\div (DF(Du)) =\text{trace} (D^2 F(Du)D^2 u)\] can be absorbed into the left-hand side to turn it into a divergence form equation.

 Next, observe that $\zeta= -\log\eta$, and
 \[D_i \zeta = -D_i \eta/\eta = -D_i \eta \det D^2u e^{-F(Du)}.\]
Thus (\ref{zetaQ}) becomes
\begin{eqnarray*}
Q(x, u, Du) &=& - D_j(u^{ij}D_i \zeta) \\&=& D_j\Big(u^{ij} \det D^2u e^{-F(Du)} D_i\eta\Big)  \\&=& D_j\Big(U^{ij}  e^{-F(Du)} D_i \eta\Big) \\&=& U^{ij} D_j \Big( e^{-F(Du)} D_i\eta\Big) \,\,(\text{using the divergence free property of } (U^{ij})) \\
&=&U^{ij}D_{ij} \eta e^{-F(Du)} -U^{ij} D_i\eta e^{-F(Du)} D_kF (Du)D_{kj}u\\
&=& U^{ij} D_{ij}\eta e^{-F(Du)} -(\det D^2u) e^{-F(Du)}DF (Du)\cdot D \eta.
\end{eqnarray*}
Therefore, (\ref{eq:F(p)-eq-n}) holds, and the lemma is proved.
\end{proof}
\begin{rem} In general,  \eqref{Abreu-plaplace} is not the Euler-Lagrange equation of any functional. However, the introduction of \[\eta =w e^{F(Du)} = (\det D^2 u)^{-1} e^{F(Du)}\] in Lemma \ref{low_lem} has its root in an energy functional. Indeed, when $Q\equiv 0$,  \eqref{Abreu-plaplace} becomes
\[D_{ij} (U^{ij} (\det D^2 u)^{-1})+ \div (DF(Du))=0, \]
and this is the the Euler-Lagrange equation of the Monge-Amp\`ere type functional
\[\int_{\Omega} \Big(F(Du) -\log \det D^2 u\Big) dx = \int_\Omega \log \Big((\det D^2 u)^{-1} e^{F(Du)}\Big)\,dx.\]
\end{rem}
\begin{rem}
Taking $F(x)=|x|^{q}/q$ with $q>1$ in Lemma \ref{low_lem} where $x\in\R^n$, we find that an equivalent form of \[U^{ij}D_{ij} w=-\div (|Du|^{q-2} Du)+Q(x, u, Du), \quad w=(\det D^2u)^{-1}\] is
\begin{equation}\label{p-eq-n}
U^{ij}D_{ij}\eta-(\det D^2u)|Du|^{q-2}  Du \cdot D\eta=Q(x, u, Du)e^{\frac{|Du|^q}{q}},
\end{equation} 
where \[\eta = w e^{\frac{|Du|^q}{q}}.\]
\end{rem}
Lemma \ref{low_lem} shows that $\eta=w e^{F(Du)}$, where $u$ is a solution of (\ref{Abreu-plaplace}),  satisfies a linearized Monge-Amp\`ere equation with a drift term. This fact plays a crucial role in the study of
singular Abreu equations in higher dimensions in latter sections.
Once we have the determinant estimates for $\det D^2u$ for the second boundary value problem of (\ref{Abreu-plaplace}), we can estimate $u$ in  $C^{1,\alpha}(\overline{\Omega})$ provided the boundary data is smooth. This gives nice regularity properties for the right-hand side of (\ref{eq:F(p)-eq-n}) (and particularly, \eqref{p-eq-n}) and the drift 
on the left-hand side. Then the higher regularity estimates for \eqref{Abreu-plaplace} can be reduced to global H\"older estimates for the following linearized Monge-Amp\`ere equation with a drift term:
\begin{equation}\label{LMA-lower}
U^{ij} D_{ij}\eta+\bb\cdot D\eta+f(x)=0.
\end{equation} 
This is the content of Section \ref{LOT}.

\subsection{Singular Abreu equations under the Legendre transform}
In this section, we derive the dual equation of \eqref{Abreu-plaplace} under the Legendre transform in any dimension.
After the Legendre transform,  the dual equation is still a linearized Monge-Amp\`ere equation. 

Denote the Legendre transform $u^{\ast}$ of $u$ by $$u^{\ast}(y)=x \cdot Du-u, \quad \text{where }y=Du(x)\in \Omega^\ast =Du(\Omega). $$
Then
\[x= Du^* (y), \quad\text{and } u(x) = y\cdot Du^* (y) -u^*(y).\]
\begin{prop}[Dual equations for singular Abreu equations]
\label{new2}
Let $u\in W^{4, s}_{\text{loc}}(\Omega)$ ($s>n$)  be a uniformly convex solution to (\ref{Abreu-plaplace}) in $\Omega$. Then in $\Omega^*=Du(\Omega)$, its Legendre transform $u^{\ast}$ satisfies the following dual equation
\begin{eqnarray}\label{new-eq-leg}
 u^{\ast ij}D_{ij} \left(w^*+F(y)\right)=Q\left(D u^{*},y\cdot D u^{*}-u^*,y\right).
\end{eqnarray}
Here $( u^{\ast ij})$ is the inverse matrix of $D^2u^{\ast}$, and $w^*= \log\det D^2 u^{\ast}$.
\end{prop}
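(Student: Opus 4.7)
The plan is to build directly on the divergence reformulation obtained in the proof of Lemma \ref{low_lem} and then pull it back through the Legendre transform. Recall from that proof that \eqref{Abreu-plaplace} is equivalent to
\[
D_j(u^{ij} D_i \zeta) = -Q(x, u, Du), \qquad \zeta := \log\det D^2 u - F(Du).
\]
The strategy is simply to interpret both sides as functions of the dual variable $y = Du(x)$ and read off the resulting equation for $u^*$.

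First I would rewrite $\zeta$ in the dual variable. Since $D^2 u^*(y) = (D^2 u(x))^{-1}$, we have $\det D^2 u^*(y) = 1/\det D^2 u(x)$, i.e.\ $\log\det D^2 u(x) = -w^*(y)$, and trivially $F(Du(x)) = F(y)$. Thus, setting $\tilde\zeta(y) := -w^*(y) - F(y)$, one has $\zeta(x) = \tilde\zeta(y(x))$. The second step is the transformation identity for the linearized Monge-Amp\`ere operator: since $D_i y_k = u_{ik}(x)$, the chain rule gives $D_i \zeta = u_{ik}\,\tilde\zeta_k$, and then $u^{ij} D_i \zeta = u^{ij} u_{ik}\tilde\zeta_k = \tilde\zeta_j$. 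Differentiating once more and using the Legendre inversion $u_{jk}(x) = u^{*jk}(y)$ (where $(u^{*jk})$ denotes the inverse of $D^2 u^*$), we obtain
\[
D_j\bigl(u^{ij} D_i \zeta\bigr) = D_j\bigl(\tilde\zeta_j(y(x))\bigr) = u_{jk}(x)\,D_{jk}\tilde\zeta(y) = u^{*jk}(y)\,D_{jk}\tilde\zeta(y).
\]

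Combining this with the divergence form of the equation and substituting $\tilde\zeta = -(w^* + F(y))$ yields
\[
u^{*jk} D_{jk}\bigl(w^* + F(y)\bigr) = Q\bigl(x, u, Du\bigr)\Big|_{x = Du^*(y)} = Q\bigl(Du^*(y),\, y\cdot Du^*(y) - u^*(y),\, y\bigr),
\]
which is exactly \eqref{new-eq-leg}. I would close by recording that the regularity hypothesis $u \in W^{4,s}_{\mathrm{loc}}(\Omega)$ with $s > n$ (together with the uniform convexity of $u$) transfers to $u^* \in W^{4,s}_{\mathrm{loc}}(\Omega^*)$ via the standard Legendre transform regularity, so that all the pointwise differentiations above are justified almost everywhere.

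The main step requiring care is the identity $D_j(u^{ij} D_i \zeta) = u^{*jk} D_{jk}\tilde\zeta$: this relies on two cancellations (the $u^{ij} u_{ik}=\delta^j_k$ collapse, and the inverse Hessian identification $u_{jk}(x)=u^{*jk}(y)$), and on the fact that no extra lower-order term appears because $\tilde\zeta_j(y(x))$ already has the dual Hessian appearing through $D_j y_k$. There is no genuine analytical obstacle beyond tracking these Legendre-transform identities correctly.
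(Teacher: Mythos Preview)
Your proof is correct, but it takes a somewhat different route from the paper's. The paper proves Proposition~\ref{new2} by two separate direct computations in dual coordinates: first showing $U^{ij}D_{ij}w = u^{*ij}D_{ij}w^*$ by tracking $D_j w$ and $D_{ij}w$ through the change of variables $y=Du(x)$, and then separately showing $\div(DF(Du)) = u^{*ij}D_{y_iy_j}F(y)$ via~\eqref{q-la}; combining these with \eqref{Abreu-plaplace} gives \eqref{new-eq-leg}. Your approach instead packages both terms into $\zeta=\log\det D^2u-F(Du)$ using the divergence form \eqref{zetaQ} already derived in Lemma~\ref{low_lem}, and then transforms the single operator $D_j(u^{ij}D_i\zeta)$ in one shot via the clean identity $D_j(u^{ij}D_i\zeta)=u^{*jk}D_{jk}\tilde\zeta$. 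Your route is shorter and more conceptual (it makes transparent that the Legendre transform simply dualizes the divergence-form linearized Monge--Amp\`ere operator), at the cost of depending on Lemma~\ref{low_lem}; the paper's route is self-contained and makes explicit the two individual transformation formulas \eqref{Abu*} and \eqref{Qu*}, which may be useful elsewhere.
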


\begin{proof}
When (\ref{Abreu-plaplace}) is a Euler-Lagrange equation of a Monge-Amp\`ere type functional, we can derive its dual
equation from the dual functional as in \cite[Proposition 2.1]{LZ}. Here for the general case, 
we prove it by direct calculations. Note that for the case that the right-hand side has no singular term, the dual equation 
has been obtained in \cite[Lemma 2.7]{Le2}. We include a complete proof here for reader's convenience.

For simplicity, let $d=\operatorname{det} D^{2} u$ and $d^{*}=\operatorname{det} D^{2} u^{*}$. Then $d(x)=d^{*-1}(y)$ where $y= Du(x)$. We will simply write $d= d^{*-1}$ with this understanding. 

We denote by $\left(u^{i j}\right)$ and $\left(u^{* i j}\right)$ the inverses of the Hessian matrices $D^{2} u=\left(D_{i j} u\right)=\left(\frac{\partial^{2} u}{\partial x_{i} \partial x_{j}}\right)$ and $D^{2} u^{*}=\left(D_{i j}u^{*}\right)=\left(\frac{\partial^{2} u^{*}}{\partial y_{i} \partial y_{j}}\right)$, respectively. 
Let $(U^{\ast ij})=(\det D^2 u^*) (u^{\ast ij})$ be the cofactor matrix of $D^2 u^\ast$.

Note that $w={d}^{-1}=d^{*}$. Thus
\[
D_{j}w=\frac{\partial w}{\partial x_{j}}=\frac{\partial d^{*}}{\partial y_{k}} \frac{\partial y_{k}}{\partial x_{j}}=\frac{\partial d^{*}}{\partial y_{k}}  D_{k j}u=\frac{\partial d^{*}}{\partial y_{k}} u^{* k j} .
\]
Clearly,
\[
d^{*-1} \frac{\partial d^{*}}{\partial y_{k}}=\frac{\partial}{\partial y_{k}}\left(\log d^*\right)=D_{y_k} w^{*},
\]
from which it follows that \[D_{x_j}w=D_{y_k}w^{*}\left(U^{*}\right)^{k j}.\] Similarly, \[D_{i j} w=(\frac{\partial}{\partial y_{l}} D_{j} w) u^{* l i}.\] Hence, using
\[
U^{i j}=\operatorname{det} D^{2} u\cdot u^{i j}=\left(d^{*}\right)^{-1} D_{y_i y_j} u^{*},
\]
and the fact that $U^{*}=\left(U^{* i j}\right)$ is divergence-free, we obtain
\begin{eqnarray}
\label{Abu*}
U^{i j} D_{i j} w
&=&\left(d^{*}\right)^{-1} D_{y_i y_j} u^{*} u^{* l i} \frac{\partial}{\partial y_{l}} D_jw\nonumber\\
&=&\left(d^{*}\right)^{-1}(\frac{\partial}{\partial y_{j}} D_{j} w) \nonumber\\
&=&\left(d^{*}\right)^{-1} \frac{\partial}{\partial y_{j}}\left(D_{y_k} w^{*} U^{* k j}\right)\nonumber\\
&=&\left(d^{*}\right)^{-1} U^{* k j} D_{y_k y_j}w^{*}\nonumber\\
&=&u^{*ij}D_{ij}w^*.
\end{eqnarray}
On the other hand, by \eqref{q-la}, we have
\begin{eqnarray}
\label{Qu*}
\div (DF(Du))&=&D_{x_j}\left[u^{ij}D_{x_i}\left(F(Du)\right)\right]\nonumber\\
&=&u^{* l j} \frac{\partial}{\partial y_{l}}\left[u^*_{ij}u^{* ki} \frac{\partial}{\partial y_{k}}\left(F(y)\right)\right]\nonumber\\
&=&u^{\ast ij}D_{y_iy_j} \left(F(y)\right).
\end{eqnarray}
Combining (\ref{Abu*}) with (\ref{Qu*}) and recalling \eqref{Abreu-plaplace}, we obtain 
\[ u^{\ast ij}D_{ij} \left(w^*+F(y)\right) = Q(x, u(x), Du(x)) = Q\left(D u^{*},y\cdot D u^{*}-u^*,y\right), \]
which is
(\ref{new-eq-leg}).
The lemma is proved.
\end{proof}

%%%%%%%%%%%%%%%%%%%%%%%%%%%%%
\vskip 30pt

\section[Linearized Monge-Amp\`ere equations with drifts]{H\"older estimates for linearized Monge-Amp\`ere equation with drifts}
\label{LOT}

In this section, we study global H\"older estimates for  the linearized Monge-Amp\`ere equation with drift 
\begin{equation}
 \label{LMA-eq}
 \left\{
 \begin{alignedat}{2}
   U^{ij}D_{ij} v +\bb\cdot Dv~& = f ~&&\text{ in } ~ \Omega, \\\
v&= \varphi~&&\text{ on }~\p \Omega,
 \end{alignedat} 
  \right.
\end{equation}
where $U=(U^{ij})= (\det D^2 u)(D^2 u)^{-1}$ and $\bb:\Omega\rightarrow\R^n$ is a vector field. 

When there is no drift term, that is $\bb\equiv 0$, global H\"older estimates for (\ref{LMA-eq}) were established under suitable assumptions on the  bounds  $0<\lambda\leq \det D^2 u\leq\Lambda$ on the Hessian determinant of $u$, and the data. In particular, the case $f\in L^n(\Omega)$ was treated in \cite[Theorem 1.4]{Le1} (see also \cite[Theorem 4.1]{LN1} for a more localized version) and the case $f\in L^{n/2+\e}(\Omega)$ was treated in \cite[Theorem 1.7]{LN2}.

We would like to extend the above global H\"older estimates to the case with bounded drift. In this case, the interior H\"older estimates for (\ref{LMA-eq}) were obtained as a consequence of the interior Harnack inequality proved in \cite[Theorem 1.1]{LeCCM}. 
Note that Maldonado \cite{M} also proved a Harnack's inequality for (\ref{LMA-eq}) with different and stronger conditions on $\bb$.

Therefore, to obtain global H\"older estimates for (\ref{LMA-eq}) with a bounded drift $\bb$, it remains to prove the H\"older estimates at the boundary. Without further assumptions on $\bb$, this seems to be difficult with current techniques. However, when $\bb$ is pointwise H\"older continuous, and vanishes at a boundary point $x_0$, we can obtain the pointwise H\"older continuity of $v$ at $x_0$. 
This can be deduced from the following result, which is a drift version of \cite[Proposition 2.1]{Le1}.
\begin{prop}[Pointwise H\"older estimate at the boundary for solutions to non-uniformly elliptic, linear equations with pointwise H\"older continuous drift]\label{ptwH}
Assume that $\Omega\subset\R^n$ is a bounded, uniformly convex domain. Let
$\varphi\in C^{\alpha}(\p\Omega)$ for some $\alpha\in (0, 1)$, and $g\in L^n(\Omega)$. Assume that
the matrix $(a^{ij})$ is  measurable, positive definite and satisfies $\det (a^{ij})\geq \lambda$ in $\Omega$. Let $\bb\in L^{\infty}(\Omega;\mathbb R^n)$.
Let $v\in C(\overline{\Omega})\cap W^{2, n}_{loc}(\Omega)$ be the 
solution to 
$$a^{ij} D_{ij}v + \bb\cdot Dv~=g ~\text{in} ~\Omega, \quad
 v= \varphi ~\text{on}~ \p\Omega.
 $$
 Suppose there are constants $\mu,\tau\in (0, 1)$, and $M>0$ such that  at some $x_0\in\p\Omega$, we have
 \begin{equation}
 \label{bbvanish}
 |\bb (x)|\leq M|x-x_0|^\mu\quad \text{for all } x\in\Omega\cap B_{\tau}(x_0).\end{equation}
Then, there exist $\delta, C$ depending only on $\lambda, n, \alpha,\mu, \tau, M$, $\|\bb\|_{L^{\infty}(\Omega)}$, and $\Omega$ such that
\begin{equation*}|v(x)-v(x_{0})|\leq C|x-x_{0}|^{\frac{\min\{\alpha,\mu\}}{\min\{\alpha,\mu\} +4}}\left(\|\varphi\|_{C^{\alpha}(\p\Omega)} + \|g\|_{L^{n}(\Omega)}\right)~\text{for all}~ x\in \Omega\cap B_{\delta}(x_{0}). \end{equation*}
\end{prop}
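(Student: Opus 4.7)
The plan is to adapt the barrier construction of Le \cite{Le1}, Proposition 2.1, which treats the drift-free case $\bb\equiv 0$ and yields the exponent $\alpha/(\alpha+4)$, by absorbing the drift term into the supersolution inequality using the fact that $\bb$ vanishes at Hölder rate $\mu$ at $x_0$. The resulting exponent becomes $\alpha'/(\alpha'+4)$ with $\alpha'=\min\{\alpha,\mu\}$, because whichever of $\alpha$ or $\mu$ is smaller becomes the rate-limiting regularity.

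\textbf{Step 1: Geometric setup and global $L^\infty$ bound.} After translating $x_0$ to the origin and rotating so that $e_n$ is the inner unit normal, the uniform convexity of $\Omega$ supplies an exterior tangent ball $B_\rho(-\rho e_n)$ touching $\partial\Omega$ only at $0$. The function $d(x):=|x+\rho e_n|-\rho$ then satisfies $d(0)=0$, $d\geq 0$ on $\bom$, and $d(x)\geq c_0|x|^2$ locally, with a Hessian having one zero and $n-1$ positive eigenvalues of size $\rho^{-1}$. Separately, the global ABP estimate with bounded drift applied to \eqref{LMA-eq} yields
\[\|v\|_{L^\infty(\Omega)}\leq \|\varphi\|_{L^\infty(\p\Omega)} + C_0\|g\|_{L^n(\Omega)},\]
with $C_0=C_0(n,\lambda,\|\bb\|_{L^\infty(\Omega)},\diam\Omega)$.

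\textbf{Step 2: Local barrier with absorbed drift.} For a scale $r\in(0,\tau)$ to be optimized, work in $D_r:=\Omega\cap B_r(0)$. Following the template of \cite{Le1}, I build a nonnegative supersolution $W_r$ as a linear combination of $d(x)$, a power of $|x|$, and a constant, tuned so that $W_r(0)=0$; that $W_r\geq \varphi-\varphi(0)$ on $\p\Omega\cap B_r$ (using the $C^\alpha$ bound on $\varphi$ together with the quadratic lower bound $d(x)\geq c_0|x|^2$); that $W_r\geq v-v(0)$ on $\Omega\cap \p B_r$ (using the Step~1 bound); and that $a^{ij}D_{ij}W_r+\bb\cdot DW_r\leq -|g|$ in $D_r$. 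The supersolution inequality for the principal part is verified by combining $\det(a^{ij})\geq\lambda$ with the structure of $D^2 d$ to lower-bound $a^{ij}D_{ij}W_r$, while the drift term is estimated by
\[|\bb(x)\cdot DW_r(x)|\leq M|x|^\mu|DW_r(x)|\leq Mr^\mu|DW_r(x)|\quad\text{in }D_r,\]
and absorbed by trading a factor $r^\mu$ against the main term in $a^{ij}D_{ij}W_r$; it is precisely at this step that the hypothesis $\mu>0$ in \eqref{bbvanish} is used essentially, and the effective Hölder exponent is forced down to $\alpha'=\min\{\alpha,\mu\}$.

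\textbf{Step 3: ABP comparison and scale balancing.} ABP applied on $D_r$ to $W_r-(v-v(0))$, together with the boundary inequalities from Step~2, gives $v-v(0)\leq W_r$ on $D_r$; the symmetric argument (applied to $-(v-v(x_0))$ with an analogous lower barrier) produces $|v-v(0)|\leq W_r$. Evaluating $W_r$ at a point $x$ with $|x|=\sigma<r$ yields a two-term bound of the schematic form $C_1 K r^{\alpha'} + C_2(\|v\|_{L^\infty}+K)\Psi(\sigma,r)$, where $K=\|\varphi\|_{C^\alpha(\p\Omega)}+\|g\|_{L^n(\Omega)}$ and $\Psi$ records the geometric contribution from $d$ and the ABP term. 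Optimizing $r$ as a function of $\sigma$ balances these two scales and produces the exponent $\beta=\alpha'/(\alpha'+4)$. The main obstacle I anticipate is the verification of the supersolution inequality for $W_r$ when $(a^{ij})$ is only controlled through the lower bound on its determinant (so that no upper bound on $\trace(a^{ij})$ is available) while simultaneously carrying the drift; this is the step where the pointwise vanishing rate $\mu$ of $\bb$ at $x_0$ enters in a genuinely quantitative way and dictates the final form of $\beta$.
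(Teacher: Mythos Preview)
Your overall strategy---localize near $x_0$, build a barrier that dominates $v-v(x_0)$ on the boundary of a small cap, apply ABP, then optimize the scale---is exactly right, and the final exponent $\min\{\alpha,\mu\}/(\min\{\alpha,\mu\}+4)$ is correct. But the barrier you describe in Step~2 will not work, and the obstacle you flag at the end is in fact fatal rather than merely technical.

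The problem is that your proposed barrier is built from the convex function $d(x)=|x+\rho e_n|-\rho$ (and a power of $|x|$). Since $D^2 d\geq 0$, one has $a^{ij}D_{ij}d\geq 0$ for any positive definite $(a^{ij})$, and with only the hypothesis $\det(a^{ij})\geq\lambda$ there is no upper bound on $a^{ij}D_{ij}d$: it can be arbitrarily large. So a term $+C\,d(x)$ in $W_r$ contributes an uncontrollably large \emph{positive} quantity to $a^{ij}D_{ij}W_r$, destroying the supersolution inequality $a^{ij}D_{ij}W_r+\bb\cdot DW_r\leq -|g|$. A power $|x|^\gamma$ with $\gamma<1$ has a Hessian of mixed sign and suffers the same defect. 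In short, any barrier whose Hessian has a nontrivial positive part cannot be controlled in this non-uniformly elliptic setting.

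The paper (following \cite{Le1}) avoids this entirely by using a \emph{linear} barrier. After normalizing so that $x_0=0$ and $\Omega\subset\{x_n>0\}$, one works in $A=\Omega\cap B_{\delta_2}(0)$ with the comparison functions
\[
\psi_\pm(x)=v(x)-v(0)\pm\varepsilon\pm C_1\kappa(\delta_2)\,x_n,\qquad \kappa(\delta_2)=\big(\inf_{\overline\Omega\cap\partial B_{\delta_2}}y_n\big)^{-1}\leq C_2\delta_2^{-2},
\]
where $C_1$ is a global $L^\infty$ bound on $v$ from ABP. Since $D^2(x_n)=0$, the principal part contributes nothing, and the only new term is the drift on the linear function:
\[
a^{ij}D_{ij}\psi_\pm+\bb\cdot D\psi_\pm=g\pm C_1\kappa(\delta_2)\,\bb\cdot e_n,
\]
with $|\bb\cdot e_n|\leq M\delta_2^\mu$ on $A$ by \eqref{bbvanish}. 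Uniform convexity of $\Omega$ gives $x_n\geq C_2^{-1}\delta_2^2$ on $\Omega\cap\partial B_{\delta_2}$, so the linear term $C_1\kappa(\delta_2)x_n$ dominates $|v-v(0)|$ there, while on $\partial\Omega\cap B_{\delta_2}$ the $\varepsilon$ handles the oscillation of $\varphi$ once $\delta_2\leq\varepsilon^{1/\gamma}$ with $\gamma=\min\{\alpha,\mu\}/2$. ABP on $A$ then yields $|\psi_\pm|\leq C\delta_2^\mu$, and choosing $\delta_2=\varepsilon^{1/\gamma}$ followed by $\varepsilon=|x|^{\gamma/(\gamma+2)}$ gives the stated H\"older rate. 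The moral: with only a determinant lower bound on the coefficients, the barrier must be affine so that the second-order operator annihilates it; the uniform convexity of $\Omega$ is what makes an affine function sufficient on the boundary.
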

We will prove Proposition \ref{ptwH} in Section \ref{ptwH_pf}.

Once we have the pointwise H\"older estimates at the boundary, global H\"older estimates for (\ref{LMA-eq}) follow. This is the content of the following theorem.
 \begin{thm}[Global H\"older estimates for solutions to the  linearized Monge-Amp\`ere equation with a drift term that are pointwise H\"older continuous at the boundary]
\label{LMA-G}
Assume that $\Omega\subset\R^n$ is a uniformly convex domain with boundary $\p\Omega\in C^3$.
Let $u \in C(\overline 
\Omega) 
\cap 
C^2(\Omega)$ be a convex function satisfying 
\[\lambda \leq \det D^2 u  \leq \Lambda \quad \text{in}\quad \Omega\]
for some positive constants $\lambda$ and $\Lambda$. Moreover, assume that $u|_{\p\Omega}\in C^3$. Let $(U^{ij})= (\det D^2 u)(D^2 u)^{-1}$. 
 Let $\bb\in L^{\infty}(\Omega;\R^n)$ with $\|\bb\|_{L^{\infty}(\Omega)}\leq M$,  $f\in L^{n}(\Omega)$ and $\varphi\in C^{\alpha}(\partial\Omega)$ for some $\alpha\in (0,1)$.
Assume that $v \in C(\overline{\Omega}) \cap W^{2,n}_{loc}( \Omega)$ is a  solution to the following linearized Monge-Amp\`ere equation with a drift term
\begin{equation*}
 \left\{
 \begin{alignedat}{2}
   U^{ij}D_{ij} v +\bb\cdot Dv ~& = f ~&&\text{ in } ~  \Omega, \\\
v&= \varphi~&&\text{ on }~\p \Omega.
 \end{alignedat} 
  \right.
\end{equation*} 
Suppose that there exist $\gamma\in (0, \alpha]$, $\delta>0$ and $K>0$ such that 
\begin{equation}
\label{Bdr_assumed}
|v(x)-v(x_{0})|\leq K|x-x_{0}|^{\gamma} \quad \text{for all}~ x_0\in\p\Omega, \text{ and } x\in \Omega\cap B_{\delta}(x_{0}). \end{equation}

Then, there exist a constant
 $\beta\in (0, 1)$ depending on $n$, $\lambda$, $\Lambda$, $\gamma$ and $M$,  and  a constant $C >0 $ depending only on  $\Omega$, $u|_{\p\Omega}$, $\lambda$, $\Lambda$, $n$,  $\alpha$, $\gamma$, $\delta$, $K$ and $M$ such that 
$$|v(x)-v(y)|\leq C|x-y|^{\beta}\Big( \|\varphi\|_{C^\alpha(\partial\Omega)}  + \|f\|_{L^{n}(\Omega)} \Big),~\forall x, y\in \Omega. $$
\end{thm}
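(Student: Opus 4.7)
The plan is to combine an interior H\"older estimate for $v$, a consequence of the interior Harnack inequality of \cite[Theorem 1.1]{LeCCM} for linearized Monge-Amp\`ere equations with bounded drift, with the pointwise boundary hypothesis \eqref{Bdr_assumed} and the $C^\alpha$ regularity of $\varphi$. Under the hypotheses $\pom\in C^3$, $u|_{\pom}\in C^3$, and $\lambda\leq\det D^2 u\leq\Lambda$, Savin-type boundary localization provides, for every $x\in\Omega$ with $d(x)=\dist(x,\pom)$ small enough, a section $S_u(x,t)$ of height $t\simeq d(x)^2$ that is compactly contained in $\Omega$ and is comparable to a Euclidean ball of radius $\simeq d(x)$ around $x$. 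Iterating the drift Harnack inequality of \cite{LeCCM} (which scales properly under the natural affine Monge-Amp\`ere normalization of such sections, since the rescaled drift has $L^\infty$-norm at most $t^{1/2}M\leq M$ for small $t$) yields an interior H\"older estimate of Krylov--Safonov form on sections, with some exponent $\beta_0\in(0,1)$ depending only on $n,\lambda,\Lambda,M$.

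\textbf{Combination step.} For $x,y\in\Omega$, set $r=|x-y|$ and WLOG $d(x)\leq d(y)$. If $r\geq c_0 d(x)$ for a fixed small $c_0>0$, then $y$ also lies within distance $Cr$ of $\pom$; projecting $x,y$ to nearest boundary points $x_0,y_0\in\pom$ and applying \eqref{Bdr_assumed} to $|v(x)-v(x_0)|$ and $|v(y)-v(y_0)|$, together with the $C^\alpha$ bound on $\varphi$ applied to $|\varphi(x_0)-\varphi(y_0)|$ and the fact that $\gamma\leq\alpha$, yields $|v(x)-v(y)|\leq Cr^\gamma$. If instead $r<c_0 d(x)$, both points lie inside the inner ball of the section $S_u(x,t)$ with $t\simeq d(x)^2$. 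The hypothesis \eqref{Bdr_assumed} applied at boundary points near $\partial S_u(x,t)\cap\pom$, together with $\varphi\in C^\alpha(\pom)$ and $\gamma\leq\alpha$, furnishes the oscillation bound $\operatorname{osc}_{S_u(x,t)} v\leq C(K+\|\varphi\|_{C^\alpha(\pom)})d(x)^\gamma$. Combining this with the interior H\"older estimate at scale $r$ inside $S_u(x,t)$ then gives
\[
|v(x)-v(y)|\leq C\Bigl(\frac{r}{d(x)}\Bigr)^{\beta_1}\bigl(d(x)^\gamma+\|f\|_{L^n(\Omega)}\bigr)
\]
for an exponent $\beta_1\in(0,1)$ determined by $\beta_0$ and the section geometry. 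Splitting into the sub-cases $\gamma\geq\beta_1$ (using $d(x)\leq\diam\Omega$) and $\gamma<\beta_1$ (using $r/d(x)<c_0$) yields $|v(x)-v(y)|\leq Cr^{\min\{\beta_1,\gamma\}}$, and combining with the first case produces the desired global H\"older estimate with $\beta=\min\{\beta_1,\gamma\}\in(0,1)$.

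\textbf{Main obstacle.} The principal technical difficulty is the scaling analysis of the interior Harnack/Krylov--Safonov iteration of \cite{LeCCM} on Monge-Amp\`ere sections of height $\simeq d(x)^2$ whose sizes shrink as $d(x)\to 0$. This requires Savin's boundary localization to control the sections' shape near $\pom$ and the affine Monge-Amp\`ere normalization to preserve the ellipticity bounds $\lambda,\Lambda$ and the $L^\infty$-bound on the drift. A secondary subtle issue is that the oscillation of $v$ on such a boundary-matched section is only $O(d(x)^\gamma)$ rather than $O(1)$, so the interpolation in Case B must exploit this smallness carefully; the final exponent $\beta\in(0,1)$ therefore depends on $n,\lambda,\Lambda,M,\gamma$ as stated.
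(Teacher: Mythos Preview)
Your approach is essentially the same as the paper's: use Savin's boundary localization (Proposition~\ref{tan_sec}) to control maximal sections tangent to $\pom$, affinely rescale and apply the interior H\"older estimate from \cite{LeCCM}, bound the oscillation on such sections via \eqref{Bdr_assumed}, and then split into two cases according to whether $|x-y|$ is small or large relative to the distance to the boundary. There is, however, one genuine imprecision in your sketch. You assert that the maximal section $S_u(x,t)$ with $t\simeq d(x)^2$ is ``comparable to a Euclidean ball of radius $\simeq d(x)$ around $x$''. This is not what Proposition~\ref{tan_sec} gives: the section is only comparable to an ellipsoid $h^{1/2}A_h^{-1}B_1(0)$ with $\|A_h\|,\|A_h^{-1}\|\leq k_0^{-1}|\log h|$, so it is sandwiched between Euclidean balls of radii comparable to $d(x)/|\log d(x)|$ and $d(x)\,|\log d(x)|$. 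This logarithmic eccentricity affects your argument in two places. First, the rescaled drift has $L^\infty$ norm bounded by $h^{1/2}\|A_h\|\,M\leq k_0^{-1}h^{1/2}|\log h|\,M$, not simply $t^{1/2}M$; this is still $\leq M$ for small $h$, so your conclusion there survives. Second, and more seriously, your dichotomy threshold ``$r<c_0\,d(x)$ implies $y$ lies in the inner ball of $S_u(x,t)$'' is unjustified, since the inner ball only has radius $\simeq d(x)/|\log d(x)|$; the correct threshold is $|x-y|\leq c\,d(x)/|\log d(x)|$, and in the complementary case one obtains only $d(x)\leq C|x-y|\,\big|\log|x-y|\big|$ rather than $d(x)\leq C|x-y|$. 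The paper absorbs these logarithmic losses at the end by choosing the final exponent $\beta$ so that $2\beta\leq\gamma$, whence $(r|\log r|)^\gamma\leq Cr^\beta$. With this correction your outline goes through and coincides with the paper's proof.
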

The proof of Theorem \ref{LMA-G} is similar to that of \cite[Theorem 1.4]{Le1} for the case without a drift. For completeness and for reader's covenience, we present its proof at the end of the paper in Section \ref{rem_sect}.
\begin{rem}
It would be interesting to prove the global H\"older estimates in Theorem \ref{LMA-G} without the assumption (\ref{Bdr_assumed}).
\end{rem}
In Section \ref{wH_sec}, we will apply Theorem \ref{LMA-G} to establish the global H\"older estimates for Hessian determinants of singular Abreu equations provided that the Hessian determinants are bounded between two positive constants; see Theorem \ref{wHolder_thm}.

\subsection{Pointwise H\"older estimates at the boundary}
\label{ptwH_pf}
In this section, we prove Proposition \ref{ptwH}.
\begin{proof}[Proof of Proposition \ref{ptwH}] The proof is similar to that of \cite[Proposition 2.1]{Le1}. Due to the appearance of the drift $\bb$ and the pointwise H\"older continuity condition (\ref{bbvanish}), we include the proof for reader's convenience. 

Let \[K= \|\bb\|_{L^{\infty}(\Omega)}, \quad\text{and }L=\text{diam}(\Omega).\] In this proof, we fix the exponent  \[\gamma=\min\{\alpha,\mu\}/2.\]
However, the proof works for any exponent $\gamma\in (0, 1)$ such that $\gamma<\min\{\alpha,\mu\}$, and in this case, we replace the exponent $\frac{\min\{\alpha,\mu\}}{\min\{\alpha,\mu\} +4}$ in the proposition by $\frac{\gamma}{\gamma + 2}$.

Clearly $\varphi\in C^{\gamma}(\p\Omega)$ with $\|\varphi\|_{C^{\gamma}(\p\Omega)} \leq C(\alpha,\mu, L) \|\varphi\|_{C^{\alpha}(\p\Omega)}$. 
By considering the equation satisfied by $(\|\varphi\|_{C^{\gamma}(\p\Omega)} + \|g\|_{L^{n}(\Omega)})^{-1}v$,
we can assume that
$$\|\varphi\|_{C^{\gamma}(\p\Omega)} + \|g\|_{L^{n}(\Omega)}=1,$$
and it suffices to prove that, for some $\delta=\delta(n, \lambda, \alpha,\tau, K,  M,\mu,\Omega)>0$, we have
$$|v(x)-v(x_{0})|\leq C(n, \lambda, \alpha,\tau, K, M,\mu,\Omega)|x-x_{0}|^{\frac{\gamma}{\gamma +2}}~\text{for all}~ x\in \Omega\cap B_{\delta}(x_{0}). $$
Moreover, without loss of generality,  we assume that 
$$\Omega\subset \R^{n}\cap \{x_{n}>0\},~x_0=0\in\p\Omega.$$ 
Since $\det (a^{ij})\geq \lambda$, by the Aleksandrov-Bakelman-Pucci (ABP) estimate for elliptic, linear equations with drifts (see \cite[inequality (9.14)]{GT}), we have
\begin{eqnarray}\label{h-from-above}
\|v\|_{L^{\infty}(\Omega)}&\leq& \|\varphi\|_{L^{\infty}(\p\Omega)}  \nonumber\\&&+ \text{diam}(\Omega)\Bigg\{\exp\Big[\frac{2^{n-2}}{n^n \omega_n} \int_\Omega \Big( 1+ \frac{|\bb|^n}{\det (a^{ij})}\Big)dx \Big]-1\Bigg\}^{1/n}\Big \|\frac{g}{(\det (a^{ij}))^{1/n}}\Big\|_{L^n(\Omega)}
\nonumber\\&\leq&  C_0
\end{eqnarray}
for a constant $C_0(n,\lambda, K, L)>1$. Here we used $\omega_n=|B_1(0)|$, and $\|\varphi\|_{C^{\gamma}(\p\Omega)} + \|g\|_{L^{n}(\Omega)}=1$.
Hence, for any $\varepsilon \in (0,\tau^{\gamma})$
\begin{equation}|v(x)-v(0)\pm \e|\leq 3C_{0}:= C_{1}.
\label{3C0C1}
\end{equation}
Consider now the functions
$$\psi_{\pm}(x) := v(x)- v(0)\pm \e\pm C_{1} \kappa(\delta_2) x_{n}$$
where
\[\kappa (\delta_2):= (\inf \{y_{n}: y\in \overline{\Omega}\cap\partial B_{\delta_{2}}(0)\})^{-1}\]
in the region \[A:= \Omega\cap B_{\delta_{2}}(0)\] where $\delta_{2}<1$ is small to be chosen later. 

The uniform convexity of $\Omega$ gives
\begin{equation}
\inf \{y_{n}: y\in \overline{\Omega}\cap\partial B_{\delta_{2}}(0)\} \geq C_{2}^{-1}\delta^2_{2}
\end{equation}
where $C_2$ depends on the uniform convexity of $\Omega$. Thus,
\[\kappa(\delta_2) \leq C_2 \delta^{-2}_2.\]

Note that, if $x\in\partial \Omega$ with $|x|\leq \delta_{1}(\e):= \e^{1/\gamma}(\leq\tau)$ then, we have from $\|\varphi\|_{C^{\gamma}(\p\Omega)}\leq 1$ that
\begin{equation}
\label{bdr_vineq0}|v(x)-v(0)| =|\varphi(x)-\varphi(0)| \leq |x|^{\gamma} \leq \e.
\end{equation}
It follows that, if we choose $\delta_{2}\leq \delta_{1}$, then from (\ref{3C0C1}) and (\ref{bdr_vineq0}), we have
$$\psi_{-}\leq 0, \psi_{+}\geq 0~\text{on}~\partial A.$$
From (\ref{bbvanish}), we have
\[|\bb| \leq M\delta_2^\mu \quad\text{in } A,\]
and therefore
\[a^{ij}D_{ij}\psi_{-} + \bb\cdot D\psi_{-}= g - C_1 \kappa(\delta_2)\bb\cdot e_n\geq -|g|-C_1 C_2 M \delta_2^{\mu-2}\quad\text{in } A, \]
where $e_n=(0, \cdots, 0, 1)\in\R^n$.

Similarly,
\[a^{ij}D_{ij}\psi_{+} + \bb\cdot D\psi_{+}= g + C_1 \kappa(\delta_2)\bb\cdot e_n\leq |g| + C_1 C_2 M \delta_2^{\mu-2}\quad\text{in } A. \]
Again, applying the ABP estimate for elliptic, linear equations with drifts, we obtain
$$\psi_{-}\leq  C(n,\lambda,  K, L)\text{diam} (A) \|g + C_1 C_2 M \delta_2^{\mu-2}\|_{L^{n}(A)}\leq C_3(n, \lambda, K, M,\Omega, \tau, \mu)\delta^\mu_{2}~\text{in}~ A.$$
In the above inequality, we used $\|g\|_{L^{n}(A)} \leq 1$ and
\[\|g + C_1 C_2 M \delta^{\mu-2}\|_{L^{n}(A)} \leq \|g\|_{L^{n}(A)} +C_1 C_2 M \delta_2^{\mu-2} |A|^{1/n} \leq C(n, \lambda, K,  M,\Omega, \tau, \mu)\delta_2^{\mu-1}. \]
Similarly, we have $$
\psi_{+}\geq - C(n,\lambda,  K, L)\text{diam} (A) \|g+  C_1 C_2 M \delta_2^{\mu-2}\|_{L^{n}(A)}\geq  -C_3(n, \lambda, K, M,\Omega, \tau, \mu)\delta^\mu_{2}~\text{in}~ A.$$
We now restrict $\e\leq C_3^{\frac{-\gamma}{\mu-\gamma}}$ so that
$$\delta_{1} = \e^{1/\gamma}\leq [\e/C_3]^{1/\mu}.$$
Then, for $\delta_{2}\leq \delta_{1}$, we have $C_3\delta_{2}^\mu\leq \e$, and  thus, 
$$|v(x)-v(0)|\leq 2\e + C_{1} \kappa(\delta_2) x_{n}\quad\text{in } A.$$
Therefore, choosing $\delta_{2}= \delta_{1}$, we find
\begin{equation*}|v(x)-v(0)|\leq 2\e + C_{1} \kappa(\delta_2) x_{n}\leq 2\e + \frac{2C_{1}C_{2}}{\delta_{2}^2}x_{n}~\text{in}~ A.\end{equation*}
Summarizing, we obtain the following inequality
\begin{equation}
\label{op-ineq0}
|v(x)-v(0)|\leq 2\e + \frac{2C_{1}C_{2}}{\delta_{2}^2}|x| \leq 2\e + 2C_{1}C_{2}\e^{-2/\gamma}|x|
\end{equation}
for all $x,\e$ satisfying the following conditions
\begin{equation}
\label{xe-ineq}
|x|\leq \delta_{1}(\e):= \e^{1/\gamma}, \quad \e\leq C_3^{\frac{-\gamma}{\mu-\gamma}}: = c_{1}.
\end{equation}
Let us now choose 
$\e = |x|^{\frac{\gamma}{\gamma + 2}}.$
Then the conditions in (\ref{xe-ineq}) are satisfied as long as
$$|x|\leq \min\{c_{1}^{\frac{\gamma +2}{\gamma}}, 1\}:=\delta.$$
With this choice of $\delta$, and recalling (\ref{op-ineq0}), we have 
$$|v(x)-v(0)| \leq (2+ 2C_1 C_2)|x|^{\frac{\gamma}{\gamma + 2}}\quad \text{for all }x\in \Omega\cap B_{\delta}(0).$$
The proposition is proved.
\end{proof}

\subsection{Singular Abreu equations with Hessian determinant bounds}
\label{wH_sec}
In this section, we apply Theorem \ref{LMA-G} to establish the global H\"older estimates for Hessian determinants of singular Abreu equations provided that the Hessian determinants are bounded between two positive constants. This is the content of the following theorem.
\begin{thm}[H\"older continuity of Hessian determinant of  singular Abreu equations under Hessian determinant bounds]
\label{wHolder_thm}
Assume that $\Omega\subset\R^n$ is a uniformly convex domain with boundary $\p\Omega\in C^3$. Let $F\in  W^{2, r}_{\text{loc}}(\R^n)$ for some $r>n$, and let $g\in L^s(\Omega)$ where $s>n$. 
Let $\varphi\in C^4(\overline{\Omega})$ and $\psi\in C^2(\overline{\Omega})$ with $\min_{\p\Omega} \psi>0$. Assume that $u\in W^{4, s}(\Omega)$ is a uniformly convex solution to the singular Abreu equation:
\begin{equation*}
\left\{
 \begin{alignedat}{2}
U^{ij} D_{ij} w&=- \div (DF(Du))+ g(x), ~&&\text{\ in} ~\ \ \Omega, \\\
 w~&= (\det D^2 u)^{-1}~&&\text{\ in}~\ \ \Omega,\\\
  u~&= \varphi~&&\text{\ on}~\ \ \p\Omega,\\\
   w~&= \psi~&&\text{\ on}~\ \ \p\Omega,
 \end{alignedat}
\right.
\end{equation*}
where $U=(U^{ij})= (\det D^2 u) (D^2 u)^{-1}$. Suppose that, for some positive constants $\lambda$ and $\Lambda$, we have
\[\lambda \leq \det D^2 u  \leq \Lambda \quad \text{in}\quad \Omega.\]
Then, there exist 
constants $\beta, C >0 $ depending only on  $\Omega$, $\varphi, \psi$, $\lambda$, $\Lambda$, $n$,  $r$, $F$, and $\|g\|_{L^{n}(\Omega)}$, such that 
\[\|w\|_{C^{\beta}(\overline{\Omega})}\leq C. \]
\end{thm}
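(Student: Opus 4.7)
The plan is to reduce Theorem \ref{wHolder_thm} to Theorem \ref{LMA-G} applied to $\eta=we^{F(Du)}$ from Lemma \ref{low_lem}; the only missing hypothesis is the pointwise H\"older continuity of $\eta$ at boundary points, which I will produce by a boundary-point-dependent shift of $F$ that kills the drift at the chosen point.

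First I would upgrade $u$ to $C^{1,\alpha_1}(\overline\Omega)$ for some $\alpha_1\in(0,1)$ using the pinching $\lambda\leq\det D^2u\leq\Lambda$, the uniform convexity of $\p\Omega$, and $\varphi\in C^4(\overline\Omega)$: Caffarelli's interior $C^{1,\alpha}$ theory for the Monge-Amp\`ere equation together with the Savin/Trudinger-Wang boundary $C^{1,\alpha}$ estimates yield uniform a priori bounds. Since $F\in W^{2,r}_{\text{loc}}(\R^n)$ with $r>n$, Morrey's embedding gives $F\in C^{1,\alpha_2}_{\text{loc}}(\R^n)$ with $\alpha_2=1-n/r$, so $F(Du)$ and $DF(Du)$ lie in $C^{\alpha_1\alpha_2}(\overline\Omega)$ with uniform bounds.

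Next, for each $x_0\in\p\Omega$ I introduce the shifted potential
$$
\tilde F_{x_0}(p):=F(p)-DF(Du(x_0))\cdot(p-Du(x_0))-F(Du(x_0)),
$$
so that $D\tilde F_{x_0}(Du(x_0))=0$. Since $\tilde F_{x_0}$ differs from $F$ by an affine function, $\div(D\tilde F_{x_0}(Du))=\div(DF(Du))$, so the singular Abreu equation is unchanged when $F$ is replaced by $\tilde F_{x_0}$. Applying Lemma \ref{low_lem} with this replacement, the auxiliary function $\eta^{x_0}(x):=w(x)e^{\tilde F_{x_0}(Du(x))}$ satisfies
$$
U^{ij}D_{ij}\eta^{x_0}+\bb^{x_0}\cdot D\eta^{x_0}=e^{\tilde F_{x_0}(Du)}g(x)\quad\text{in }\Omega,
$$
with drift $\bb^{x_0}:=-(\det D^2u)D\tilde F_{x_0}(Du)$ that is uniformly bounded on $\overline\Omega$ and pointwise H\"older-vanishing at $x_0$:
$$
|\bb^{x_0}(x)|\leq\Lambda\,|DF(Du(x))-DF(Du(x_0))|\leq C|x-x_0|^{\alpha_1\alpha_2},
$$
with $C$ independent of $x_0$. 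I then apply Proposition \ref{ptwH} to $\eta^{x_0}$ at $x_0$ — its hypotheses holding uniformly in $x_0$ because $\det(U^{ij})=(\det D^2u)^{n-1}\geq\lambda^{n-1}$, $e^{\tilde F_{x_0}(Du)}g\in L^s(\Omega)\subset L^n(\Omega)$, and $\eta^{x_0}|_{\p\Omega}=\psi e^{\tilde F_{x_0}(Du)}\in C^{\alpha_1}(\p\Omega)$ uniformly — to obtain $|\eta^{x_0}(x)-\eta^{x_0}(x_0)|\leq C|x-x_0|^{\beta_1}$ for $x\in\Omega\cap B_\delta(x_0)$ with $\beta_1,\delta,C$ uniform in $x_0$. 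The Taylor estimate $|\tilde F_{x_0}(Du(x))|\leq C|Du(x)-Du(x_0)|^{1+\alpha_2}\leq C|x-x_0|^{\alpha_1(1+\alpha_2)}$ (valid since $F\in C^{1,\alpha_2}$) then converts this into the pointwise H\"older bound $|\eta(x)-\eta(x_0)|\leq C|x-x_0|^{\beta_2}$ at every $x_0\in\p\Omega$, with uniform constants.

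With pointwise H\"older continuity of $\eta$ at every boundary point, Theorem \ref{LMA-G} applies to $\eta$, which solves $U^{ij}D_{ij}\eta-(\det D^2u)DF(Du)\cdot D\eta=e^{F(Du)}g$ with bounded drift and $L^n$ right-hand side, yielding $\|\eta\|_{C^\beta(\overline\Omega)}\leq C$. Since $w=\eta e^{-F(Du)}$ and $F(Du)\in C^{\alpha_1\alpha_2}(\overline\Omega)$ is uniformly H\"older, the desired bound $\|w\|_{C^\beta(\overline\Omega)}\leq C$ follows. The main obstacle is precisely the boundary estimate: the drift in the equation for $\eta$ does not vanish at $\p\Omega$, so Proposition \ref{ptwH} cannot be applied to $\eta$ directly. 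The family $\{\eta^{x_0}\}_{x_0\in\p\Omega}$, each tailored to kill the drift at the respective boundary point, circumvents this obstacle, and the uniformity in $x_0$ of all the estimates is what makes the reduction to Theorem \ref{LMA-G} legitimate.
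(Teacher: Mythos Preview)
Your proposal is correct and follows essentially the same approach as the paper: the paper also introduces the shifted potential $\tilde F(y)=F(y)-F(Du(x_0))-DF(Du(x_0))\cdot(y-Du(x_0))$ at each boundary point $x_0$, applies Lemma \ref{low_lem} to obtain the equation for $\eta^{x_0}=we^{\tilde F(Du)}$ with drift vanishing H\"older-continuously at $x_0$, invokes Proposition \ref{ptwH} for the pointwise boundary estimate, transfers this to $\eta$, and then concludes via Theorem \ref{LMA-G}. Your identification of the main obstacle and its resolution via the $x_0$-dependent family is exactly the paper's argument.
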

\begin{proof} Since $F\in W^{2, r}_{\text{loc}}(\R^n)$, by the Sobolev embedding theorem, we have $F\in C^{1, \alpha}(\R^n)$ where $\alpha=1-n/r\in (0, 1)$. From the Hessian determinant bounds on $u$, and $u=\varphi$ on $\p\Omega$ where $\varphi\in C^{4}(\overline{\Omega})$, 
 by \cite[Proposition 2.6]{LS}, we have 
 \begin{equation}
 \label{uc1a0}
 \|u\|_{C^{1,\alpha_0}(\overline\Omega)}\leq C_1,\end{equation}
 where $\alpha_0\in (0, 1)$ depends on $\lambda,\Lambda$, and $n$. The constant $C_1$ depends on $\Omega, n,\lambda,\Lambda$ and $\varphi$.
 
By Lemma \ref{low_lem}, 
 the function
 \[\eta(x)= w(x) e^{ F(Du(x))}\] satisfies
 \begin{equation}
 \label{etaeq}
 U^{ij} D_{ij}\eta -(\det D^2 u) DF(Du(x)) \cdot D\eta= g(x)e^{ F(Du(x))}\equiv f(x). \end{equation}
From (\ref{uc1a0}), we deduce that $\eta|_{\p\Omega}\in C^{\alpha_0}$ with estimate
\begin{equation}
\label{etabdr} \|\eta\|_{C^{\alpha_0}(\p\Omega)} \leq C_\ast(\psi, C_1, F).
\end{equation}
 
 {\it Step 1: Pointwise H\"older continuity of  $\eta$ at the boundary.} Fix $x_0\in\p\Omega$. 
 Let us denote 
 \[\tilde F(y):=F(y)-F(Du(x_0))-D F(Du(x_0))\cdot (y-Du(x_0))\quad\text{for } y\in\R^n.\]
 Then, we have
 \begin{equation*}U^{ij} D_{ij} w(x)=- \div (D\tilde F(Du(x)))  + g(x)\quad\text{in }\Omega.\end{equation*}
 By Lemma \ref{low_lem}, the function
 \[\eta^{x_0}(x)= w(x) e^{\tilde F(Du(x))}\] satisfies
 \begin{equation}
 \label{etax0}
 U^{ij} D_{ij}\eta^{x_0} -(\det D^2 u)  (DF (Du(x))- DF (Du(x_0))) \cdot D\eta^{x_0}= g(x)e^{\tilde F(Du(x))}\equiv f^{x_0}(x). \end{equation}
 Clearly,
 \begin{equation}
 \label{fx0Ln}
 \|f^{x_0}\|_{L^{n}(\Omega)} \leq  C_2,
 \end{equation}
 where $C_2$ depends on $\|F\|_{C^1(B_{C_1}(0))}$ and $\|g\|_{L^{n}(\Omega)}$.
 
 The vector field 
 \[\bb(x)=(\det D^2 u)\cdot (DF(Du(x))- DF (Du(x_0)))  \]
 satisfies in $\Omega$ the estimate
 \begin{equation}
\label{bbetax0} 
 |\bb(x)| \leq \Lambda \|DF\|_{C^\alpha(B_{C_1}(0))} |Du(x)-Du(x_0)|^\alpha \leq \Lambda C_1\|DF\|_{C^\alpha(B_{C_1}(0))}  |x-x_0|^{\alpha_1},\end{equation}
 where
 \[\alpha_1=\alpha\alpha_0.\]
 We also have $\eta^{x_0}\mid_{\p\Omega}\in C^{\alpha_1}(\p\Omega)$ with 
 \begin{equation}
 \label{etax0bdr}
 \|\eta^{x_0}\|_{C^{\alpha_1}(\p\Omega)} \leq C_3(\alpha, \alpha_0, C_1, \psi, \|DF\|_{C^\alpha(B_{C_1}(0))}).\end{equation}
 Note that 
 \[\det (U^{ij})=(\det D^2 u)^{n-1}\geq \lambda^{n-1}.\]
Hence, from (\ref{etax0}), (\ref{bbetax0}) and (\ref{etax0bdr}), we can apply Proposition \ref{ptwH}  and find constants \[\gamma=\alpha_1/(\alpha_1 +4)\in (0, 1),\] and $\delta, C_4>0$ depending only on $n,\lambda, \Lambda, \alpha, F,\varphi, \psi $, and $\Omega$ such that, for  all $x\in \Omega\cap B_{\delta}(x_{0})$,
\begin{equation}
\label{etax0x0}
|\eta^{x_0}(x)-\eta^{x_0}(x_{0})|\leq C_4|x-x_{0}|^{\gamma}\left( \|\eta^{x_0}\|_{C^{\alpha_1}(\p\Omega)}  + \|f^{x_0}\|_{L^{n}(\Omega)}\right)\leq C_5|x-x_{0}|^{\gamma},\end{equation}
where $C_5= C_4(C_2 + C_3)$.

Due to
\[\eta(x) = \eta^{x_0}(x) e^{F(Du(x_0))+DF(Du(x_0))\cdot (Du(x)-Du(x_0))},\]
and (\ref{uc1a0}),
(\ref{etax0x0}) implies the pointwise $C^\gamma$ continuity of $\eta$ at $x_0$ with estimate
\begin{equation}
\label{etaptw}
|\eta(x)-\eta(x_0)|\leq C_6|x-x_0|^\gamma \quad \text{for all } x\in \Omega\cap B_{\delta} (x_0),
\end{equation}
where $C_6$ depends on $\Omega$, $\varphi$, $\psi$,  $\lambda$, $\Lambda$, $n$,  $\alpha$, $F$ and $\|g\|_{L^n(\Omega)}$.

{\it Step 2: Global H\"older continuity of $\eta$ and $w$.} From (\ref{etaptw}), we can apply Theorem \ref{LMA-G} to (\ref{etaeq}) to conclude the global H\"older continuity of $\eta$. Since $w= \eta e^{-F(Du)}$, $w$ is also globally H\"older continuous. In other words,
there exist a constant
 $\beta\in (0, 1)$ depending on $n,\lambda,\Lambda,\alpha$ and $F$,  and  a constant $C >0 $ depending only on  $\Omega$, $\varphi$, $\psi$,  $\lambda$, $\Lambda$, $n$,  $r$, $F$ and $\|g\|_{L^n(\Omega)}$ such that 
$$\|w\|_{C^{\beta}(\overline{\Omega})}\leq C. $$
The theorem is proved.
\end{proof}

%%%%%%%%%%%%%%%%%%%%%%%%%%%%%%%%%%

\vskip 30pt

\section{Proof of Theorem \ref{thm:SBV-p-lap}}
\label{SBV-plap-pf}

In this section, we prove Theorem \ref{thm:SBV-p-lap} using a priori estimates and degree theory. With Theorem \ref{wHolder_thm} at hand, a key step is to establish a priori Hessian determinant estimates for uniformly convex solutions $u\in W^{4, s}(\Omega)$ ($s>n$)  of (\ref{eq:Abreu-plap}).

For the Hessian determinant estimates, we will use the maximum principle and the Legendre transform; see also \cite[Theorem 1.2]{LZ} with a slightly different proof for the case of $F(x)=|x|^q/q$ $(q>1)$ and $c(x, z)$ being smooth.

\begin{lem}[Hessian determinant estimates]\label{lem:det-est-plap}
Let $\Omega\subset\R^n$ be an open, smooth, bounded and uniformly convex domain.  
Assume that $\varphi\in C^{5}(\overline{\Omega})$ and $\psi\in C^{3}(\overline{\Omega})$ with $\min_{\p \Omega}\psi>0$. Let $r, s>n$. Let $F\in W^{2, r}_{\text{loc}}(\R^n)$ be a convex function, and $c(x,z)$ be a function on $\overline\Omega\times\mathbb R$.  Suppose $c(x,z)\leq 0$ with 
 $c\in C^\alpha(\overline\Omega\times \mathbb R)$ where $\alpha\in (0, 1)$ or 
 $c(x,z)\equiv c(x)\in L^s(\Omega)$.
 Assume that $u\in W^{4, s}(\Omega)$ is a uniformly convex solution to the second boundary value problem 
\begin{equation*}
  \left\{ 
  \begin{alignedat}{2}U^{ij}D_{ij}w~& =-\div (DF(Du))+c(x, u)~&&\text{\ in} ~\ \ \Omega, \\\
 w~&= (\det D^2 u)^{-1}~&&\text{\ in}~\ \ \Omega,\\\
u ~&=\varphi~&&\text{\ on}~\ \ \p \Omega,\\\
w ~&= \psi~&&\text{\ on}~\ \ \p \Omega,
\end{alignedat}
\right.
\end{equation*} 
where $(U^{ij})=(\det D^2 u)(D^2 u)^{-1}$.
Then 
\[
C^{-1}\leq \det D^2u\leq  ( \min_{\partial\Omega}\psi)^{-1}\quad\text{in }\Omega,
\]
where $C>0$ is a constant depending on $\Omega$, $n$, $\varphi$, $\psi$, $F$ and $c$. In the case of $c(x,z)\equiv c(x)\in L^s(\Omega)$, the dependence of $C$ on $c$ is via $\|c\|_{L^{n}(\Omega)}$.
\end{lem}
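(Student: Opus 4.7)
The lemma is a two-sided estimate on $\det D^{2}u$, and I will prove the two bounds by separate arguments.

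For the upper bound, the plan is to exploit the convexity of $F$ to put the primal equation into a maximum-principle-friendly form. Since $u\in W^{4,s}(\Omega)$ with $s>n$ and $F\in W^{2,r}_{\mathrm{loc}}(\R^{n})$ with $r>n$, the chain rule gives $\div(DF(Du))=\operatorname{trace}(D^{2}F(Du)\,D^{2}u)$ almost everywhere. Both $D^{2}F(Du)$ and $D^{2}u$ are positive semi-definite, so their product has non-negative trace; combined with $c(x,u)\leq 0$ this yields $U^{ij}D_{ij}w\leq 0$ in $\Omega$. The classical (possibly degenerate) maximum principle for $U^{ij}D_{ij}$, whose coefficient matrix is positive definite because $u$ is uniformly convex, then gives $w\geq \min_{\p\Omega}\psi$, i.e., $\det D^{2}u\leq (\min_{\p\Omega}\psi)^{-1}$.

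For the lower bound, I will pass to the Legendre transform. Convexity of $u$, the smooth boundary datum $\varphi$, the uniform convexity of $\Omega$, and the upper bound on $\det D^{2}u$ just proved yield (by standard Monge--Amp\`ere barrier constructions) an a priori Lipschitz estimate $\|u\|_{C^{0,1}(\bom)}\leq C$, so $\Omega^{*}:=Du(\Omega)$ is bounded and $|F(Du)|\leq C$ on $\bom$. On $\Omega^{*}$, with $w^{*}=\log\det D^{2}u^{*}=\log w$, Proposition~\ref{new2} reads
\[u^{*ij}D_{ij}\bigl(w^{*}+F(y)\bigr)=c\bigl(Du^{*}(y),\,y\cdot Du^{*}(y)-u^{*}(y)\bigr)\leq 0\quad\text{in }\Omega^{*}.\]
I will apply the Aleksandrov--Bakelman--Pucci estimate to this degenerate elliptic equation, which gives
\[\sup_{\Omega^{*}}(w^{*}+F)\leq \sup_{\p\Omega^{*}}(w^{*}+F)+C\,\diam(\Omega^{*})\biggl(\int_{\Omega^{*}}\frac{(-c)^{n}}{\det(u^{*ij})}\,dy\biggr)^{1/n}.\]
The crucial algebraic identity here is $\det(u^{*ij})=\det D^{2}u$, which via the area formula $y=Du(x)$, $dy=\det D^{2}u\,dx$ produces the clean bound
\[\int_{\Omega^{*}}\frac{(-c)^{n}}{\det(u^{*ij})}\,dy=\int_{\Omega}(-c(x))^{n}\,dx=\|c^{-}\|_{L^{n}(\Omega)}^{n},\]
finite in both the $C^{\alpha}$ case (where $c$ is bounded because $u$ is bounded) and the $L^{s}$ case with $s>n$. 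The boundary term is controlled by $\log\max_{\p\Omega}\psi$ and $\|F(Du)\|_{L^{\infty}(\p\Omega)}$, so together we get an upper bound on $w^{*}$, i.e., the desired lower bound on $\det D^{2}u$.

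The main obstacle is precisely this lower bound: the primal equation only exhibits $w$ as a supersolution and offers no direct upper bound on $w$, while $\div(DF(Du))$ is singular in $D^{2}u$. The Legendre transform circumvents both difficulties simultaneously by absorbing the singular term into the $F(y)$ correction of the unknown. The residual issue, that $u^{*ij}D_{ij}$ has no a priori lower bound on its ellipticity, is exactly accommodated by the integrated ABP estimate together with the area formula, which replaces the weighted integral over $\Omega^{*}$ by a norm of $c$ over $\Omega$ and so avoids any circular dependence on the lower bound we seek. A minor technical point, justifying the chain rule $\div(DF(Du))=\operatorname{trace}(D^{2}F(Du)\,D^{2}u)$ when $F$ is merely in $W^{2,r}_{\mathrm{loc}}$, will be handled by approximating $F$ by smooth convex functions and passing to the limit using the uniform estimates above.
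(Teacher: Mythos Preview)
Your proposal is correct and follows essentially the same route as the paper: the upper bound via the maximum principle applied to $U^{ij}D_{ij}w\le 0$ (using convexity of $F$ and $c\le 0$), followed by the Lipschitz bound on $u$ via barriers, then the lower bound via the Legendre transform identity of Proposition~\ref{new2} combined with the ABP estimate and the area-formula change of variables that converts the weighted $L^n$ norm over $\Omega^*$ into $\|c(\cdot,u)\|_{L^n(\Omega)}$. The only cosmetic difference is that you phrase the ABP weight as $\det(u^{*ij})$ while the paper writes $(\det D^2u^*)^{-1/n}$; these are equivalent, and your additional remark about justifying the chain rule for $F\in W^{2,r}_{\mathrm{loc}}$ is a reasonable technical observation that the paper leaves implicit.
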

\begin{proof}
From the convexity of $F$ and $u$, we have 
\[
-\div (DF(Du))=-\text{trace} (D^2 F(Du) D^2 u)\leq 0.
\]
This combined with $c(x,u)\leq 0$ yields \[U^{ij}D_{ij}w=-\div (DF(Du))+c(x,u)\leq 0\quad \text{in }\Omega.\] 
Hence, by the maximum principle, 
$w$ attains its minimum value in $\overline{\Omega}$ on the boundary. Thus \[w\geq\min_{\partial\Omega}w= \min_{\partial\Omega}\psi>0\quad \text{in }\Omega.\] This together with $\det D^2u=w^{-1}$ gives the upper bound for the Hessian determinant:
\[
\det D^2u\leq C_1:= ( \min_{\partial\Omega}\psi)^{-1}\quad\text{in }\Omega.
\]
From the above upper bound, by using $u=\varphi$ on $\partial\Omega$ together with $\Omega$ being smooth and uniformly convex, 
we can construct suitable barrier functions to deduce that 
\begin{equation}\label{eq:Du-bound-plap}
   \sup_{\Omega}|u| +  \|Du\|_{L^{\infty}(\Omega)}\leq C_2,
\end{equation}
where $C_2$ depends on $n, \varphi, \psi$ and $\Omega$.

We now proceed to establish a positive lower bound for the Hessian determinant.\\
Let \[u^*(y)= x\cdot Du(x)-u(x)\] be the Legendre transform of $u(x)$ where \[y = Du(x) \in \Omega^*:= Du(\Omega).\]
Then, (\ref{eq:Du-bound-plap}) implies
\begin{equation}\label{eq:u^*-bound-plap}
   \text{diam}(\Omega^\ast) +  \|u^*\|_{L^{\infty}(\Omega^\ast)}\leq C_3(n, \varphi, \psi, \Omega).
\end{equation}
In view of Proposition \ref{new2},  $u^*$ satisfies
\begin{equation}
\label{dualu}
u^{\ast ij}D_{ij} \left(w^*+F(y)\right)=c(Du^*,y\cdot Du^*-u^*)\quad \text{in }\Omega^\ast,
\end{equation}
where \[(u^{\ast ij})= (D^2 u^{\ast})^{-1},\quad \text{and } w^*=\log\det D^2u^*.\] 
Note that, for $y= Du(x)\in \p\Omega^\ast$ where $x\in\p\Omega$, we have
\[w^\ast(y) =\log (\det D^2 u(x))^{-1} =\log \psi(x). \]
By the ABP maximum principle applied to (\ref{dualu}), and recalling (\ref{eq:u^*-bound-plap}),  we find
\begin{eqnarray*}
\sup_{\Omega^*}(w^*+F(y)) &\leq& \sup_{\partial\Omega^*}(w^*+F(y))+C(n,   \text{diam}(\Omega^\ast))\left\|\frac{c(Du^*,y\cdot Du^*-u^*)}{{(\det D^2u^*)}^{-1/n}}\right\|_{L^n(\Omega^*)}\\ 
&=& \sup_{\partial\Omega^*}(w^*+F(y))+C(n,   \text{diam}(\Omega^\ast))\Bigg(\int_{\Omega}|c(x,u)|^n dx\Bigg)^{1/n}\\
&\leq& C_4 
\end{eqnarray*}
where $C_4$ depends on $\Omega$, $n$, $\varphi$, $\psi$, $F$ and $c$. Clearly, in the case of $c(x,z)\equiv c(x)\in L^s(\Omega)$, the dependence of $C_4$ on $c$ is via $\|c\|_{L^{n}(\Omega)}$. In the above estimates, we used
\begin{eqnarray*}
\left\|\frac{c(Du^*,y\cdot Du^*-u^*)}{{(\det D^2u^*)}^{-1/n}}\right\|_{L^n(\Omega^*)} &=&\Bigg(\int_{\Omega^*}|c(Du^*,y\cdot Du^*-u^*)|^n \det D^2u^*\,dy\Bigg)^{1/n}\\
&=&\Bigg(\int_{\Omega}|c(x,u)|^n \det D^2u^* \det D^2u\,dx\Bigg)^{1/n}\\
&=& \Bigg(\int_{\Omega}|c(x,u)|^n dx\Bigg)^{1/n}.
\end{eqnarray*}
It follows that
\[\sup_{\Omega^*}w^*(y)=\sup_{\Omega^*} \log\det D^2 u^\ast  \leq C_5\]
which implies 
\[
\det D^2u\geq e^{-C_5}>0 \quad\text{in }\Omega,
\]
where $C_5$ depends on $\Omega$, $n$, $\varphi$, $\psi$, $F$ and $c$. This is the desired positive lower bound for the Hessian determinant, and
the proof of the lemma is completed.
\end{proof}

Now, we can give the proof of Theorem \ref{thm:SBV-p-lap}.
\begin{proof}[Proof of Theorem \ref{thm:SBV-p-lap}] We divide the proof, using a priori estimates and degree theory, into three steps. Steps 1 and 2 establish higher order derivative estimates for $u\in W^{4, s}(\Omega)$ ($s>n$)  solutions. Step 3 confirms the existence of $W^{4, s}(\Omega)$ or  $C^{4,\beta}(\overline{\Omega})$ solutions via degree theory. 

In the following, we fix $s>n$ with the additional requirement that
\begin{equation*}
\left\{
 \begin{alignedat}{2}
s&= r~&&\text{\ in case }~ (i), \\\
  s~&= \min\{r, p\}  ~&&\text{\ in case }~(ii).
 \end{alignedat}
\right.
\end{equation*}

{\it Step 1: Determinant estimates and second order derivative estimates for uniformly convex $u\in W^{4, s}(\Omega)$ ($s>n$)  solutions $u$ of (\ref{eq:Abreu-plap}).}
  By Lemma \ref{lem:det-est-plap},  we have 
\beq\label{eq:d-boud-plap}
0<\lambda\leq \det D^2u\leq \Lambda:= ( \min_{\partial\Omega}\psi)^{-1} \quad\text{in }\Omega,
\eeq
 where $\lambda$ depends on $\Omega$, $n$, $F$, $\varphi$, $\psi$, and on either $c$ in case $(i)$, or $\|c\|_{L^{n}(\Omega)}$ in case $(ii)$. 
 
 From (\ref{eq:d-boud-plap}) and $u=\varphi$ on $\p\Omega$ where $\varphi\in C^{5}(\overline{\Omega})$, 
 by \cite[Proposition 2.6]{LS}, we have 
 \begin{equation}\label{C1beta}\|u\|_{C^{1,\alpha_0}(\overline\Omega)}\leq C_1,\end{equation}
 where $\alpha_0\in (0, 1)$ depends on $\lambda,\Lambda$, and $n$. The constant $C_1$ depends on $\Omega, n,\lambda,\Lambda$ and $\varphi$.
 
With (\ref{eq:d-boud-plap}) and $F\in W^{2, r}_{\text{loc}}(\R^n)$, we can use Theorem \ref{wHolder_thm} to find  $\beta_0\in (0,1)$, and $C_5>0$ depending on $\Omega, n, F, r, \varphi, \psi, c$, such that
\[
\|w\|_{C^{\beta_0}(\overline{\Omega})}\leq C_2(\Omega, n, F, r, \varphi, \psi, c).
\]
Hence
$
\det D^2u=w^{-1}\in C^{\beta_0}(\overline{\Omega})$.
By the global Schauder estimates for the Monge-Amp\`ere equation in  \cite{S2,TW3}, we  have
\begin{equation}
\label{ue2beta}
\|u\|_{C^{2,\beta_0}(\overline{\Omega})}\leq C_3(\Omega, n, F, r, \varphi, \psi, c).
\end{equation}
Combining this with \eqref{eq:d-boud-plap}, we find \[C_4^{-1} I_n\leq D^2u\leq  C_4 I_n \quad\text{in }\Omega\] for some $C_4(\Omega, n, F, r, \varphi, \psi, c)>0$.  Here $I_n$ denotes the identity $n\times n$ matrix. In other words, the linear operator $U^{ij}D_{ij}$ is uniformly elliptic with  coefficients $U^{ij}$ bounded in $C^{\beta_0}(\overline{\Omega})$.
\vskip 7pt

{\it Step 2:   Global higher order derivative estimates for uniformly convex $W^{4, s}(\Omega)$ $(s>n)$ solutions $u$ of (\ref{eq:Abreu-plap}).} Denote the right-hand side of \eqref{eq:Abreu-plap} by 
\begin{equation}\label{eq:f_e}
f:=-\div (DF(Du)) + c(x, u)=-\text{trace} (D^2 F(Du) D^2 u) +c(x,u).
\end{equation}
Observe that, 
one has 
the following estimate
\begin{equation}
\label{GLr}
\|\text{trace} (D^2 F(Du) D^2 u)\|_{L^{r}(\Omega)} \leq C(\Omega, n, F, r, \varphi, \psi, c).\end{equation}
 Indeed,  we have
 \begin{eqnarray*}
 \|\text{trace} (D^2 F(Du) D^2 u)\|^r_{L^r(\Omega)} &\leq& n^2 \|D^2 u\|^r_{L^{\infty}(\Omega)}\| D^2 F(Du) \|^r_{L^r(\Omega)} \\
 &\leq& n^2C_3^r \int_{\Omega} |D^2 F(Du(x))|^{r} dx\quad (\text{using } (\ref{ue2beta}))\\
 &=& n^2C_3^r \int_{Du(\Omega)}  |D^2 F(y)|^{r}  \frac{1}{\det D^2 u((Du)^{-1}(y))}dy \\
 &\leq& n^2C_3^r \lambda^{-1} \int_{B_{C_1}(0)} |D^2 F(y)|^{r} dy \quad (\text{using } (\ref{eq:d-boud-plap}) \text{and } (\ref{C1beta}))\\
 &\leq& C_3^r \lambda^{-1} C(n, C_1, F, r).
 \end{eqnarray*}

 We consider cases $(i)$ and $(ii)$ separately. 
\vskip 7pt
\begin{enumerate}
\item[(i)]
{\it  The case of $ c\in C^\alpha(\overline\Omega\times\mathbb R)$.} Recall that $s=r$ in this case.
We have from (\ref{GLr}) that $f=-\text{trace} (D^2 F(Du) D^2 u) + c(x, u)\in L^s(\Omega)$ with estimate
\[\|f\|_{L^{s}(\Omega)} \leq C(\Omega, n, F, r, s, \varphi, \psi, c).\]
By {\it Step 1}, \begin{equation*}
  U^{ij}D_{ij}w =f~\text{\ in} ~ \Omega, 
w ~=\psi~\text{\ on}~\p \Omega,
\end{equation*} 
is a uniformly elliptic equation in $w$ with $C^{\beta_0}(\overline{\Omega})$ coefficients. Thus, from the standard $W^{2,p}$ theory for uniformly elliptic linear equations (see \cite[Chapter 9]{GT}),
we obtain the following $W^{2, s}(\Omega)$ estimate:
\[
\|w\|_{W^{2,s}(\Omega)}\leq C(\Omega, n, q, s, \varphi, \psi, c).
\]
Now, recalling  $\det D^2 u=w^{-1}$ in $\Omega$ with $u=\varphi$ on $\partial\Omega$, we can differentiate and apply the standard Schauder and Calderon-Zygmund theories to obtain the following global
$W^{4,s}$ estimate of $u$:
$$\|u\|_{W^{4,s}(\Omega)}\leq C(\Omega, n, F, r, s, \varphi, \psi, c).$$
Indeed, for any $k\in\{1, \ldots, n\}$
by differentiating $\det D^2 u=w^{-1}$ in the $x_k$ direction, we see that $D_k u$
solves the equation
\[U^{ij} D_{ij} (D_k u)= D_k (w^{-1})\in W^{1, s} (\Omega),\]
which is uniformly elliptic with $C^{\beta_0}(\overline\Omega)$ coefficients $U^{ij}$ due to \eqref{eq:d-boud-plap} and \eqref{ue2beta}. Since $s>n$, we have $W^{1, s} (\Omega)\in C^{0, 1-n/s}(\overline{\Omega})$. By the classical Schauder theory (see \cite[Chapter 6]{GT} for example), we deduce that $D_k u\in C^{2, \beta_1}(\overline{\Omega})$ for all $k$
with appropriate estimates, where $\beta_1 =\min\{\beta_0, 1-n/s\}$. This shows that $u\in C^{3, \beta_1}(\overline{\Omega})$ and the coefficients satisfy $U^{ij}\in C^{1, \beta_1}(\overline{\Omega})$. Next, for any  $l\in\{1, \ldots, n\}$, we differentiate the preceding equation in the $x_l$ direction to get
\[U^{ij} D_{ij} (D_{kl} u) = D_{kl} (w^{-1}) - D_l U^{ij} D_{ijk}u \in L^s(\Omega)\quad\text{for all } k, l\in \{1, \ldots, n\}. \]
Applying the Calderon-Zygmund estimates, we obtain $D_{kl} u \in W^{2, s}(\Omega)$ for all $k, l\in \{1, \ldots, n\}$ with appropriate estimates.  Consequently, $u\in W^{4, s}(\Omega)$ with estimate stated above.

Moreover, in the particular case that $F\in C^{2,\alpha_0}(\R^n)$, we find that $f\in C^{\gamma}(\overline{\Omega})$ where $\gamma\in(0, 1)$ depends only on $\alpha$, $F, \alpha_0$,  and $\beta_0$ with estimate
 \begin{equation}
 \label{fcgamma}
   \|f\|_{C^{\gamma}(\overline{\Omega})}\leq C(\Omega, n, \alpha, q, c, \varphi, \psi).\end{equation}
 Thus, we can apply the classical Schauder theory (see \cite[Chapter 6]{GT} for example) to \eqref{eq:Abreu-plap} which, by {\it Step 1}, is a uniformly elliptic equation in $w$ with $C^{\beta_0}(\overline{\Omega})$ coefficients. We conclude that $w \in C^{2, \beta}(\overline{\Omega})$,  where $\beta\in (0, 1)$ depends only on $n,\gamma$ and $\beta_0$, with estimate
  \[\|w\|_{C^{2, \beta}(\overline{\Omega})}\leq C(\Omega, n, \alpha, \alpha_0, F, c, \varphi, \psi).\]
  Due to \[\det D^2 u=w^{-1} \quad \text{in }\Omega, \quad u=\varphi \quad 
 \text{on }\partial\Omega,\] this implies that $u\in C^{4, \beta}(\overline{\Omega})$ with estimate
   \begin{equation}
   \label{uc4b}
 \|u\|_{C^{4, \beta}(\overline{\Omega})}\leq C(\Omega, n, \alpha, \alpha_0, F, c, \varphi, \psi).
 \end{equation}
With this estimate, we go back to   $f=-\text{trace} (D^2 F(Du) D^2 u) + c(x, u)$ and find that one can actually take $\gamma=\min\{\alpha,\alpha_0\}$ in (\ref{fcgamma}). Repeating the above process, one  find that (\ref{uc4b}) holds for $\beta=\min\{\alpha,\alpha_0\}$.

\item[(ii)]{\it  The case of $c(x,z)\equiv c(x) \in L^p(\Omega)$ with $p>n$.} Recall that in this case $s=\min\{r, p\}$. Then, we have from \eqref{eq:f_e} and (\ref{GLr}) that
 \begin{equation*}\|f\|_{L^{s}(\Omega)} \leq (\Omega, n, p, F, r, s, \varphi, \psi, \|c\|_{L^p(\Omega)}).\end{equation*}
 Arguing as in the case $(i)$ above, we obtain the following $W^{4,s}$ estimate of $u$:
$$\|u\|_{W^{4,s}(\Omega)}\leq C(\Omega, n, p, F, r, s,\varphi, \psi, \|c\|_{L^p(\Omega)}).$$
\end{enumerate}

 {\it Step 3: Existence of solutions via degree theory.} From the $C^{4,\beta}(\overline{\Omega})$ or $W^{4, s}(\Omega)$ estimates for uniformly convex $W^{4, s}(\Omega)$ solutions $u$ of (\ref{eq:Abreu-plap}) in {\it Step 2}, we can use the Leray-Schauder degree theory as in \cite{CW,TW2,LeCPAM}
to prove the existence of $C^{4,\beta}(\overline{\Omega})$ or $W^{4, s}(\Omega)$ solutions to (\ref{eq:Abreu-plap}) as stated in the theorem. We omit details here.
\end{proof}

%%%%%%%%%%%%%%%%%%%%%%%%%%%%%%%%%%

\vskip 10pt

\section{Proof of Theorem \ref{SBV-1}}
\label{SBV-1-pf}

In this section, we prove Theorem \ref{SBV-1}.  
As in the  proof of Theorem \ref{thm:SBV-p-lap} in Section \ref{SBV-plap-pf}, we focus on a priori estimates for smooth, uniformly convex solutions. The most crucial ones are the Hessian determinant estimates.
Without the sign of $c$, we first  need to obtain the \textit{a priori} $L^\infty$-bound for $u$.

\begin{lem}[A priori $L^\infty$-bound for uniformly convex $W^{4, n}$ solutions]
\label{lem:bound-u-high}
Let $\Omega\subset\R^n$($n\geq 3$) be an open, smooth, bounded and uniformly convex domain.  
Assume that $\varphi\in C^{5}(\overline{\Omega})$ and $\psi\in C^{3}(\overline{\Omega})$ with $\min_{\p \Omega}\psi>0$. 
Assume $\bb\in L^\infty(\Omega;\R^n)$. Suppose that there exist functions $g_1, g_2\in L^1(\Omega)$ and a constant $0\leq m<n-1$ such that 
\beq\label{c-cond}
|c(x,z)|\leq |g_1(x)|+|g_2(x)|\cdot  |z|^{m} \quad\text{in }\Omega\times\R.
\eeq
Assume that $u\in W^{4, n}(\Omega)$ is a uniformly convex solution to \eqref{Abreu-lap}. Then there exists a constant $C>0$ depending on $\Omega$, $n$, $\varphi$, $\psi$, $\|\bb\|_{L^\infty(\Omega)}$, $\|g_1\|_{L^1(\Omega)}$, $\|g_2\|_{L^1(\Omega)}$ and $m$ such that
\[
\|u\|_{L^\infty(\Omega)}\leq C.
\]
\end{lem}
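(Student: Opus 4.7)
Upper bound: by convexity of $u$ with $u|_{\p\Omega}=\varphi$, one has $u\leq \max_{\p\Omega}\varphi$ on $\Omega$, so it suffices to bound $M:=-\min_\Omega u$ from above.

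Setup via the Legendre transform. If $\min u$ is attained on $\p\Omega$, then $M\leq \|\varphi\|_{L^\infty(\p\Omega)}$; otherwise it is attained at an interior point $x_0$, so that $Du(x_0)=0$ and $0\in\Omega^*:=Du(\Omega)$. The Legendre transform $u^*(y):=x\cdot Du(x)-u(x)$, $y=Du(x)$, is then a convex function on $\Omega^*$ with $u^*(0)=M$ attained in the interior, and the change of variables $x=Du^*(y)$ gives the geometric identity
\[
\int_{\Omega^*}\det D^2u^*\,dy=|\Omega|.
\]
Applying Proposition~\ref{new2} with $F(p)=|p|^2/2$ (so that $\div(DF(Du))=\Delta u$) and $Q(x,u,Du)=\bb\cdot Du+c(x,u)$ produces on $\Omega^*$ the drifted linearized Monge--Amp\`ere equation
\[
u^{*ij}D_{ij}\!\left(w^*+\tfrac12|y|^2\right)=\bb(Du^*(y))\cdot y+c\bigl(Du^*(y),\,y\cdot Du^*(y)-u^*(y)\bigr),
\]
where $w^*=\log\det D^2u^*$. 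Since $y\cdot Du^*(y)-u^*(y)=u(Du^*(y))$, hypothesis \eqref{c-cond} bounds the right-hand side pointwise by a multiple of $|y|+|g_1(Du^*)|+|g_2(Du^*)|\max(M,\|\varphi\|_\infty)^m$.

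The plan is to combine two maximum principles on $\Omega^*$. First, the Alexandrov estimate for the convex function $u^*$, together with the identity above, yields
\[
\min_{\p\Omega^*}u^*-M\leq C(n)\operatorname{diam}(\Omega^*)^{(n-1)/n}|\Omega|^{1/n}.
\]
Second, an ABP estimate applied to the displayed linearized equation controls $w^*+\tfrac12|y|^2$ by an integral that, after the change of variables $x=Du^*(y)$ (under which $\det D^2u^*\,dy=dx$), reduces to an integral over $\Omega$ of $|\bb\cdot Du+c(x,u)|$ in a suitable norm. The elementary convexity inequality
\[
Du(x_\p)\cdot(x_\p-x_0)\geq \varphi(x_\p)+M,\qquad x_\p\in\p\Omega,
\]
links $\operatorname{diam}(\Omega^*)$ and $\min_{\p\Omega^*}u^*$ to $M$. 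Threading these estimates together produces, after some work, a closed inequality of the form
\[
M\leq C_0+C_1\,M^{\theta}
\]
with an exponent $\theta=\theta(m,n)$ that is strictly less than one exactly when $m<n-1$. Young's inequality then absorbs the $M^\theta$ term and yields the desired uniform bound $M\leq C$.

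The principal difficulty is the weak integrability hypothesis $g_1,g_2\in L^1(\Omega)$, which precludes a direct $L^n$-ABP argument on the linearized equation. The proof must make essential use of the Monge--Amp\`ere Alexandrov estimate (an $L^1$-type bound via the identity $\int_{\Omega^*}\det D^2u^*\,dy=|\Omega|$) to substitute for parts of the $L^n$ reasoning on the linearized side, and the threshold $m<n-1$ is sharp for the bootstrap exponent $\theta$ to satisfy $\theta<1$.
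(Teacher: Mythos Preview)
Your approach is fundamentally different from the paper's, and it has a genuine gap: the two maximum principles you invoke on $\Omega^*$ do not feed into one another in a way that closes an inequality for $M$. The ABP estimate on the dual linearized equation controls $\sup_{\Omega^*}(w^*+\tfrac12|y|^2)$, i.e.\ an upper bound on $\log\det D^2u^*$, which is a \emph{lower} bound on $\det D^2u$. This has no direct bearing on $M=u^*(0)$ or on $\operatorname{diam}(\Omega^*)$. The Alexandrov estimate for $u^*$ would compare $M$ to the boundary values of $u^*$, but $u^*(Du(x_\partial))=x_\partial\cdot Du(x_\partial)-\varphi(x_\partial)$ is itself of order $\operatorname{diam}(\Omega^*)$, so no information is gained. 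You never explain how a bound on $w^*$ is converted into a bound on $M$, and I do not see a mechanism for this. Moreover, the $L^1$ integrability of $g_1,g_2$ genuinely blocks the $L^n$-ABP step you describe; you flag this as a difficulty but offer no fix. Finally, the claimed exponent $\theta=\theta(m,n)<1$ is asserted but never derived from your scheme.

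The paper proceeds by a completely different, and more direct, energy method. One multiplies the equation $U^{ij}D_{ij}w=-\Delta u+\bb\cdot Du+c$ by $u-\tilde u$, where $\tilde u$ is a fixed smooth uniformly convex function with $\tilde u=\varphi$ on $\partial\Omega$, and integrates. Two integrations by parts (using that $U^{ij}$ is divergence-free and $w=\psi$ on $\partial\Omega$) produce the key inequality
\[
\int_{\partial\Omega} K\psi\, u_\nu^n\,dS \leq \int_\Omega(\Delta u-\bb\cdot Du-c)(u-\tilde u)\,dx + C\Big(\int_{\partial\Omega}(u_\nu^+)^n\,dS\Big)^{(n-1)/n}+C,
\]
where $K$ is the Gauss curvature of $\partial\Omega$. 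The three pieces of the right-hand side are then bounded by powers of $I:=\int_{\partial\Omega}(u_\nu^+)^n\,dS$: the $\Delta u$ piece by $I^{1/n}$ after one more integration by parts; the $\bb\cdot Du$ piece by $I^{2/n}$ via Cauchy--Schwarz and a bound $\|Du\|_{L^2}^2\lesssim (1+\|u\|_{L^\infty})I^{1/n}$; and the $c$ piece by $I^{(m+1)/n}$ using only $g_1,g_2\in L^1$ because one is integrating $c$ against the \emph{bounded} function $u-\tilde u$. Since $n\geq 3$ and $m<n-1$, all exponents are strictly less than one, and the polynomial inequality closes to bound $I$, hence $\|u\|_{L^\infty}$ via $\|u\|_{L^\infty}\leq \|\varphi\|_{L^\infty}+C\,I^{1/n}$. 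This is where the threshold $m<n-1$ actually enters, and why the $L^1$ hypothesis on $g_1,g_2$ suffices.
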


\begin{proof} From $u\in W^{4, n}(\Omega)$ and the Sobolev embedding theorem, we have $u\in C^2(\overline{\Omega})$.
For a convex function $u\in C^2(\Omega)$ with $u = \varphi$ on $\partial\Omega$, we have (see, e.g., \cite[inequality (2.7)]{Le2})
\begin{equation}\label{eq:u-bound-high}
\|u\|_{L^{\infty}(\Omega)} \leq \|\varphi\|_{L^{\infty}(\Omega)}+C_1\left(n, \Omega,\|\varphi\|_{C^{2}(\Omega)}\right)\left(\int_{\partial \Omega}\left(u_{\nu}^{+}\right)^{n}\,dS\right)^{1 / n},
\end{equation}
 where  $u_{\nu}^{+}=\max \left(0, u_{\nu}\right)$, $\nu$ is the unit outer normal of $\p\Omega$ and $dS$ is the boundary measure. 
Thus, to prove the lemma, it suffices to prove
\[
\int_{\partial \Omega}\left(u_{\nu}^{+}\right)^{n} d S \leq C(\Omega, n, \varphi, \psi, \|\bb\|_{L^\infty(\Omega)}, \|g_1\|_{L^1(\Omega)}, \|g_2\|_{L^1(\Omega)}, m).
\]
For this, we use the arguments as in the proof of \cite[Lemma 4.2]{LeCPAM}. 
Observe that,
since $u$ is convex with boundary value $\varphi$ on $\p\Omega$, we have
$u_\nu\geq -\|D\varphi\|_{L^{\infty}(\Omega)}$ and hence 
\begin{equation}
\label{unuab}
|u_\nu|\leq u_\nu^{+}+ \|D\varphi\|_{L^{\infty}(\Omega)},\quad \text{and }
(u_\nu^{+})^n\leq u^n_\nu + \|D\varphi\|^n_{L^{\infty}(\Omega)} \quad\text{on }\p\Omega.
\end{equation}

Let $\rho$ be a strictly convex defining function of $\Omega$, i.e.
\[
\Omega:=\left\{x \in \R^{n}: \rho(x)<0\right\}, \rho=0 \text { on } \partial \Omega \text { and } D \rho \neq 0 \text { on } \partial \Omega.
\] 
Let
\[
\tilde{u}=\varphi+\mu\left(e^{\rho}-1\right) .
\]
Then, for $\mu$ large, depending on $n$, $\Omega$ and $\|\varphi\|_{C^{2}(\overline{\Omega})}$, the function $\tilde{u}$ is uniformly convex, belongs to $C^{5}(\overline{\Omega})$. Furthermore, as in \cite[Lemma 2.1]{Le2}, there exists a constant $C_2>0$ depending only on $n$, $\Omega$, and $\|\varphi\|_{C^{4}(\overline{\Omega})}$ such that the following facts hold:

(i) $\|\tilde{u}\|_{C^{4}(\overline{\Omega})} \leq C_2, \quad$ and $\operatorname{det} D^{2} \tilde{u} \geq C_2^{-1}>0$ in $\Omega$,

(ii) letting $\tilde{w}=\left[\operatorname{det} D^{2} \tilde{u}\right]^{-1}$, and denoting by $\left(\tilde{U}^{i j}\right)$ the cofactor matrix of $D^{2} \tilde{u}$, we have
\[
\big\|\tilde{U}^{i j} D_{i j} \tilde{w}\big\|_{L^{\infty}(\Omega)} \leq C_2.
\]
Let $K(x)$ be the Gauss curvature at $x \in \partial \Omega$. Then, since $\Omega$ is uniformly convex, we have \begin{equation}\label{Kom} 0<C^{-1}(\Omega)\leq K(x)\leq C(\Omega) \quad\text{on }\p\Omega.\end{equation} 
From the estimate (4.10) in the proof of  \cite[Lemma 4.2]{LeCPAM} with $\theta=0$ and $f_\delta:= -\Delta u +\textbf{b}\cdot Du+ c$ which uses (i) and (ii), we obtain
\begin{equation}\label{eq:K-u_nu-high}
\begin{aligned}
\int_{\partial \Omega} K \psi u_{\nu}^{n} d S \leq & \int_{\Omega}\left(\Delta u-\textbf{b}\cdot Du-c\right)(u-\tilde{u}) d x+C_3\left(\int_{\partial \Omega}\left(u_{\nu}^{+}\right)^{n} d S\right)^{(n-1)/ n}+C_3,
\end{aligned}
\end{equation}
where $C_3$ depends on $C_2$, $\Omega$ and $\varphi$.

We will estimate the first term on the right-hand side of \eqref{eq:K-u_nu-high} by splitting it into three terms. Firstly, using $u\Delta u=\div (u Du)-|Du|^2$ and integrating by parts, we have
\begin{align}
    \int_{\Omega}\Delta u(u-\tilde{u})\,dx&\leq\int_{\Omega}u\Delta u\,dx+C_2\int_{\Omega}\Delta u\,dx\nonumber\\
    &=\int_{\partial\Omega}\varphi u_{\nu}\,dS-\int_{\Omega}|Du|^2\,dx+C_2\int_{\partial\Omega}u_{\nu}\,dS\nonumber\\
    &\leq C(\varphi, C_2)\int_{\partial\Omega}|u_{\nu}|\,dS-\int_{\Omega}|Du|^2\,dx\nonumber\\
    &\leq C_4(n,\varphi, C_2)\left(\int_{\partial\Omega}(u^{+}_{\nu})^n\,dS\right)^{\frac{1}{n}} + C_4(n,\varphi, C_2)\quad(\text{recalling } (\ref{unuab})).\label{eq:1st-RHS}
\end{align}
Secondly, by integration by parts, we find
\begin{align}
\int_{\Omega}|Du|^2\,dx&=\int_{\Omega}(\div(uDu)-u\Delta u)\,dx\nonumber\\
&=\int_{\partial\Omega}\varphi u_{\nu}\,dS-\int_{\Omega}u\Delta u\,dx\nonumber\\
&\leq C_5(\varphi)\int_{\partial\Omega}u_{\nu}^+\,dS+\|u\|_{L^\infty(\Omega)}\int_{\Omega}\Delta u\,dx + C_5(\varphi)\nonumber\\
&\leq(C_5+\|u\|_{L^\infty(\Omega)})\int_{\partial\Omega}u_{\nu}^+\,dS + C_5.\label{eq:Du-L2bound-high}
\end{align}
In view of  \eqref{eq:Du-L2bound-high} with \eqref{eq:u-bound-high}, we can estimate
\begin{align}
    \int_{\Omega} \bb\cdot Du(\tilde{u}-u)\,dx &\leq |\Omega|^{1/2}\|\bb\|_{L^\infty(\Omega)}(\|\tilde{u}\|_{L^\infty(\Omega)}+\|u\|_{L^\infty(\Omega)})\left(\int_{\Omega}|Du|^2\,dx\right)^{\frac{1}{2}}\nonumber\\
    &\leq C_6+C_6\left(\int_{\partial \Omega}\left(u_{\nu}^{+}\right)^{n} d S\right)^{\frac{2}{n}}\label{eq:2nd-RHS}
\end{align}
where $C_6$ depends on $\Omega, n$, $\varphi$ and $\|\bb\|_{L^\infty(\Omega)}$. 
Moreover, $C_6$ depends linearly on $\|\bb\|_{L^\infty(\Omega)}$.

Finally, using \eqref{c-cond} and (\ref{eq:u-bound-high}), we have
\begin{align}
    \int_{\Omega}-c(u-\tilde{u})\,dx
    &\leq(\|u\|_{L^\infty(\Omega)}+\|\tilde{u}\|_{L^\infty(\Omega)})\int_{\Omega}|g_1|+|g_2||u|^{m}\,dx\nonumber\\
    &\leq C+C\|u\|_{L^\infty(\Omega)}^{m+1}\nonumber\\
    &\leq C_7+C_7\left(\int_{\partial\Omega}(u_{\nu}^+)^n\,dS\right)^{\frac{m+1}{n}}.\label{eq:3rd-RHS}
\end{align}
Here $C_7$ depends on $\Omega$, $n$, $\varphi$, $\|g_1\|_{L^1(\Omega)}$, $\|g_2\|_{L^1(\Omega)}$ and $m$.

It follows from (\ref{unuab}) that 
\begin{equation}
\label{unu+}
\int_{\partial\Omega}K\psi (u_{\nu}^+)^n\,dS\leq C_8(\Omega,\varphi,\psi) + \int_{\partial\Omega}K \psi u_{\nu}^{n} \,d S.
\end{equation}
Combining (\ref{Kom})--\eqref{eq:1st-RHS}, \eqref{eq:2nd-RHS}--(\ref{unu+}) while recalling that $0\leq m<n-1$ and $n\geq 3$, we obtain 
\begin{align*}
C^{-1}(\Omega)\min_{\partial\Omega}\psi\int_{\partial\Omega} (u_{\nu}^+)^n\,dS&\leq C_8 + \int_{\partial\Omega}K \psi u_{\nu}^{n} \,d S\\
    &\leq C_9\left[1+\left(\int_{\partial\Omega}(u_{\nu}^+)^n\,dS\right)^{\frac{n-1}{n}}+\left(\int_{\partial\Omega}(u_{\nu}^+)^n\,dS\right)^{\frac{m+1}{n}}\right],
\end{align*}
where $C_9$ depends on $C_3, C_4$, $C_6$, $C_7$ and $C_8$.
It follows that
\[
\int_{\partial\Omega}(u_{\nu}^+)^n\,dS\leq C
\]
where $C$ depends on $\Omega$, $n$, $\varphi$, $\psi$, $\|\bb\|_{L^\infty(\Omega)}$, $\|g_1\|_{L^1(\Omega)}$, $\|g_2\|_{L^1(\Omega)}$ and $m$.
The proof of the lemma is completed.
\end{proof}

\begin{rem} We have the following observations regarding the two dimensional version of Lemma \ref{lem:bound-u-high}.
\label{lem51_rem}
\begin{enumerate} 
\item[(i)] The above proof fails in two dimensions. This is because,  in two dimensions, the right-hand side of  \eqref{eq:2nd-RHS} is of the same order of magnitude as the left-hand side of  (\ref{eq:K-u_nu-high}). Therefore, when $C_6$ is large, plugging \eqref{eq:2nd-RHS} into (\ref{eq:K-u_nu-high}) does not give any new information.
\item[(ii)] On the other hand, since $C_6$ depends linearly on $\|\bb\|_{L^\infty(\Omega)}$, in two dimensions, one can still absorb the right-hand side of  \eqref{eq:2nd-RHS} into the left-hand side of  (\ref{eq:K-u_nu-high}) as long as $\|\bb\|_{L^\infty(\Omega)}$ is small, depending on $\Omega,\varphi$ and $\psi$. In this case, we still have the $L^\infty$ estimate. 
\item[(iii)]In Section \ref{SBV-2-pf}, we will establish the $L^\infty$ estimate in two dimensions under a stronger condition on
$\bb$ but $\|\bb\|_{L^\infty(\Omega)}$ can be arbitrarily large.
\end{enumerate}
\end{rem}

Next, we establish the Hessian determinant estimates.
\begin{lem}[Hessian determinant estimates]
\label{upper_lower}
Let $u\in W^{4,p}(\Omega)$ be a uniformly convex solution to the fourth order equation
\begin{equation}
\label{eq_lower}
  \left\{ 
  \begin{alignedat}{2}\sum_{i, j=1}^{n}U^{ij}D_{ij}w~& =-\Delta u+\bb\cdot D u+c(x)~&&\text{\ in} ~\ \ \Omega, \\\
 w~&= (\det D^2 u)^{-1}~&&\text{\ in}~\ \ \Omega,\\\
u ~&=\varphi~&&\text{\ on}~\ \ \p \Omega,\\\
w ~&= \psi~&&\text{\ on}~\ \ \p \Omega,
\end{alignedat}
\right.
\end{equation}
where $(U^{ij})=(\det D^2 u)(D^2 u)^{-1}$, $\min_{\p\Omega}\psi>0$, $\bb\in L^\infty(\Omega;\R^n)$ and $c\in L^{p}(\Omega)$ with $p>2n$.  Then there exists a constant $C>0$ depending on $\Omega$, $n$, $p$, $\varphi$, $\psi$, $\|\bb\|_{L^\infty(\Omega)}$ and $\|c\|_{L^p(\Omega)}$ such that
$$0<C^{-1}\leq \det D^2u\leq C\quad\text{in }\Omega.$$
\end{lem}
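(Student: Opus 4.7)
The plan is to mirror the two-step strategy of Lemma \ref{lem:det-est-plap}: first establish an upper bound on $\det D^2 u$, then derive the positive lower bound via the Legendre transform and ABP. The absence of a sign condition on $c$ and the presence of the drift $\bb \cdot Du$ force me to invoke the a priori $L^{\infty}$-bound on $u$ furnished by Lemma \ref{lem:bound-u-high} (applied with $m = 0$, $g_1 = c \in L^1(\Omega)$, and $g_2 = 0$; here $p > 2n > 1$ and $\bb \in L^\infty$ guarantee its hypotheses) at the outset, rather than relying on the pure maximum principle argument that worked in Lemma \ref{lem:det-est-plap}.

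For the upper bound $\det D^{2} u \leq \Lambda$ (equivalently $w \geq c_{1} > 0$), I exploit the convexity of $u$ to get $\Delta u \geq 0$ and hence the differential inequality
\[
U^{ij} D_{ij} w \;\leq\; \bb \cdot Du + c \quad \text{in } \Omega.
\]
The ABP estimate for the (degenerate) linearized Monge-Amp\`ere operator, together with $\det (U^{ij}) = (\det D^{2} u)^{n-1} = w^{-(n-1)}$, gives
\[
\sup_{\Omega} w \;\leq\; \sup_{\pom} w + C\,\diam(\Omega)\,(\sup_{\Omega} w)^{(n-1)/n}\,\|\bb \cdot Du + c\|_{L^{n}(\Omega)},
\]
and after Young's inequality to absorb $(\sup w)^{(n-1)/n}$, it reduces to $\sup_{\Omega} w \leq C(1 + \|\bb \cdot Du + c\|_{L^{n}(\Omega)}^{n})$. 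Controlling $\bb \cdot Du + c$ in $L^{n}$ is where the hypothesis $p > 2n$ enters: it supplies enough room to bootstrap from the $L^{2}$-bound on $Du$ (a byproduct of $\|u\|_{L^{\infty}(\Omega)}$ via the integration-by-parts identity used in the proof of Lemma \ref{lem:bound-u-high}) to the $L^{n}$-bound, via H\"older interpolation against $c \in L^{p}$.

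For the lower bound $\det D^{2} u \geq c_{0} > 0$, once the upper bound is known, the smoothness of $\varphi$ and the uniform convexity of $\Omega$ permit barrier constructions (as in Lemma \ref{lem:det-est-plap}) that deliver $\|Du\|_{L^{\infty}(\Omega)} \leq C$, so $\Omega^{*} = Du(\Omega)$ has uniformly bounded diameter. I then apply Proposition \ref{new2} with $F(y) = |y|^{2}/2$ (so that $-\Delta u = -\div(DF(Du))$) and $Q(x, u, Du) = \bb \cdot Du + c(x)$ to arrive at the dual equation
\[
u^{* ij} D_{ij}\bigl( w^{*} + \tfrac{1}{2}|y|^{2} \bigr) \;=\; \bb(Du^{*}(y)) \cdot y + c(Du^{*}(y)) \quad \text{in } \Omega^{*},
\]
where $w^{*} = \log \det D^{2} u^{*}$. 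Applying ABP on $\Omega^{*}$ and carrying out the change of variables $y = Du(x)$ (exactly as in Lemma \ref{lem:det-est-plap}) transforms the weighted $L^{n}$-norm of the right-hand side into $\|\bb \cdot Du + c\|_{L^{n}(\Omega)} \leq C$; combined with $\sup_{\partial \Omega^{*}}(w^{*} + |y|^{2}/2) \leq C$ this gives $\sup_{\Omega^{*}}(w^{*} + |y|^{2}/2) \leq C$, whence $\det D^{2} u \geq c_{0}$ in $\Omega$.

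The principal obstacle is the upper bound step, specifically controlling $\|\bb \cdot Du\|_{L^{n}(\Omega)}$ when no a priori $L^{n}$-bound on $Du$ is yet available (the pointwise estimate $|Du(x)| \lesssim \mathrm{dist}(x,\pom)^{-1}$ only delivers $L^{q}$ for $q<2$). The hypothesis $p > 2n$, strictly stronger than the $p > n$ that suffices in the non-singular framework of \cite{Le2}, is precisely the extra integrability that closes this estimate; the two-dimensional analogue in Theorem \ref{SBV-2} confirms that in lower dimensions one must compensate by imposing structural conditions on $\div(\bb)$ instead.
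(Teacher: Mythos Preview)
Your upper-bound step has a genuine gap. From the differential inequality $U^{ij}D_{ij}w\le \bb\cdot Du+c$ (i.e.\ $w$ is a supersolution), the ABP estimate yields a \emph{lower} bound
\[
\inf_{\Omega} w \;\ge\; \inf_{\partial\Omega}\psi \;-\; C\,\bigl(\sup_{\Omega}w\bigr)^{(n-1)/n}\,\bigl\|(\bb\cdot Du+c)^{+}\bigr\|_{L^{n}(\Omega)},
\]
not the upper bound on $\sup_{\Omega}w$ that you wrote. Even in this corrected form the estimate is useless: the right-hand side involves $\sup_{\Omega}w$, which is not yet controlled, and in any case the expression can be negative, so no positive lower bound on $w$ (hence no upper bound on $\det D^2u$) follows. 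Your proposed mechanism for the hypothesis $p>2n$ --- H\"older interpolation of $Du$ against $c\in L^{p}$ to upgrade $Du\in L^{2}$ to $Du\in L^{n}$ --- does not make sense, since $\bb\cdot Du$ and $c$ are separate summands and $c$ plays no role in the integrability of $Du$.

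The paper obtains the upper bound by the Chau--Weinkove auxiliary-function trick: set $G=\det D^{2}u\cdot e^{Mu^{2}}$ with $M\ge\tfrac14\|\bb\|_{L^\infty}^{2}$ and compute (using $u^{ij}D_iuD_ju\ge|Du|^{2}/\Delta u$, $\Delta u\ge n\,d^{1/n}$, and the $L^\infty$ bound on $u$) that
\[
G^{-1}u^{ij}D_{ij}G\;\ge\;-\bigl(c-2Mnu-\tfrac{n}{2}d^{1/n}\bigr)^{+}.
\]
ABP applied to $G$ then gives $\sup_{\Omega}G\le C+C\|d^{1+1/n}(c-2Mnu-\tfrac{n}{2}d^{1/n})^{+}\|_{L^{n}}$. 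On the set where the positive part is nonzero one has $d^{1/n}\le (2/n)(c-2Mnu)$, so one trades $p_{0}-n$ extra powers of $(c-2Mnu)$ for $(p_{0}-n)/n$ fewer powers of $d$; choosing $2n<p_{0}<\min\{n(n+2),p\}$ makes the residual exponent on $d$ equal $n\gamma$ with $\gamma=1-p_{0}/n^{2}+2/n\in(0,1)$, and since $d\le G$ this yields $\sup G\le C+C(\sup G)^{\gamma}$, which closes. This is precisely where the condition $p>2n$ is used. Your Legendre-transform argument for the lower bound is correct and coincides with the paper's.
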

\begin{proof}
The proof uses a trick in Chau-Weinkove \cite{CW}. For simplicity,
denote \[d:=\det D^2u\quad\text{and } (u^{ij})= (D^2 u)^{-1}.\] Let \[G=d e^{Mu^2},\] where $M>0$ is a large constant to be determined later. By Lemma \ref{lem:bound-u-high}, 
we have \[\|u\|_{L^\infty(\Omega)}\leq C_0\]
where $C_0>0$ depends on $\Omega$, $n$, $\varphi$, $\psi$, $\|\bb\|_{L^\infty(\Omega)}$, and $\|c\|_{L^1(\Omega)}$.

 Since $w=d^{-1}$, we have
$w=G^{-1}e^{Mu^2}$. Direct calculations yield
\begin{eqnarray*}
  D_iw &=& -G^{-2}D_iGe^{Mu^2}+2MuD_iuG^{-1}e^{Mu^2}, \\[5pt]
  D_{ij} w&=& 2G^{-3}D_iGD_jGe^{Mu^2}-G^{-2}D_{ij}Ge^{Mu^2}\\[5pt] && -2MuD_juD_iGG^{-2}e^{Mu^2}-2MuD_iuD_jGG^{-2}e^{Mu^2}\\[5pt]
  &&+2MD_iuD_juG^{-1}e^{Mu^2}+2MuD_{ij}uG^{-1}e^{Mu^2}+4M^2u^2D_iuD_juG^{-1}e^{Mu^2}.
\end{eqnarray*}
Then, using $U^{ij} G^{-1} e^{Mu^2}= u^{ij}$,  we have
\begin{eqnarray*}
U^{ij}D_{ij} w&=&2G^{-2}u^{ij}D_iGD_jG-G^{-1}u^{ij}D_{ij}G-4MuG^{-1}u^{ij}D_iuD_jG\\[5pt]
&&+2Mu^{ij}D_iuD_ju+2Mnu+4M^2u^2u^{ij}D_iuD_ju\\[5pt]
&=&G^{-2}u^{ij}D_iGD_jG+u^{ij}(2MuD_iu-G^{-1}D_iG)(2MuD_ju-G^{-1}D_jG)\\[5pt]
&&-G^{-1}u^{ij}D_{ij}G+2Mu^{ij}D_iuD_ju+2Mnu\\[5pt]
&\geq&-G^{-1}u^{ij}D_{ij}G+2Mu^{ij}D_iuD_ju+2Mnu.
\end{eqnarray*}
Thus, from the first equation in \eqref{eq_lower}, we obtain
\[G^{-1}u^{ij}D_{ij}G\geq 2Mu^{ij}D_iuD_ju+2Mnu+\Delta u-\bb\cdot Du-c.\]
Using the following matrix inequality (see, for example, \cite[Lemma 2.8(c)]{LeCCM})
\[u^{ij} D_i v D_j v \geq \frac{|Dv|^2}{\trace(D^2u)} =\frac{|Dv|^2}{\Delta u}\]
together with $\Delta u\geq n d^{1/n}$, we find that
\begin{eqnarray}
\label{CWABP}
G^{-1}u^{ij}D_{ij}G
&\geq& 2M\frac{|Du|^2}{\Delta u}+\frac{1}{2}\Delta u-\bb\cdot Du+\frac{1}{2}\Delta u+2Mnu-c\nonumber\\[5pt]
&\geq&2\sqrt{M}|Du|-|\bb|\cdot |Du|+\frac{n}{2}d^{\frac{1}{n}}+2Mnu-c\nonumber\\[5pt]
&\geq&-(c-2Mnu-\frac{n}{2}d^{\frac{1}{n}})^+ \quad\text{in }\Omega,
\end{eqnarray}
provided \[M\geq \frac{1}{4}\|\bb\|_{L^\infty(\Omega)}^2.\] Hence, by the ABP estimate applied to (\ref{CWABP}) in $\Omega$ where $G= \psi^{-1}e^{M\varphi^2}$ on $\p\Omega$, we have
\begin{eqnarray}
\sup_{\Omega}G&\leq&\sup_{\partial\Omega}(\psi^{-1}e^{M\varphi^2})
+C(n,\Omega)\left\|\frac{(c-2Mnu-\frac{n}{2}d^{\frac{1}{n}})^+}{\big[\det (G^{-1} (D^2 u)^{-1})\big]^{\frac{1}{n}}}\right\|_{L^n(\Omega)}\nonumber\\[5pt]
&=&\sup_{\partial\Omega}(\psi^{-1}e^{M\varphi^2})
+C(n,\Omega)\left\|\frac{de^{Mu^2}(c-2Mnu-\frac{n}{2}d^{\frac{1}{n}})^+}{d^{-\frac{1}{n}}}\right\|_{L^n(\Omega)}\nonumber\\[5pt]
&\leq&\sup_{\partial\Omega}(\psi^{-1}e^{M\varphi^2})
+C_1\left\|d^{1+\frac{1}{n}}(c-2Mnu-\frac{n}{2}d^{\frac{1}{n}})^+\right\|_{L^n(\Omega)}.\label{ABP_lower}
\end{eqnarray}
Here $C_1$ depends on $n$, $\Omega$ and $C_0$ (via $\|u\|_{L^\infty(\Omega)}$). 
Note that, for any $p_0>n$, we have
\begin{multline}
\label{CWineq}
\left\|d^{1+\frac{1}{n}}(c-2Mnu-\frac{n}{2}d^{\frac{1}{n}})^+\right\|_{L^n(\Omega)}\leq\left(\int_{\big\{c-2Mnu\geq (n/2)d^{\frac{1}{n}}\big\}}d^{n+1}(c-2Mnu)^n\,dx\right)^{\frac{1}{n}}\\
\leq\left(\int_{\big\{ c-2Mnu\geq (n/2)d^{\frac{1}{n}}\big\}}d^{n+1}(c-2Mnu)^n\frac{(c-2Mnu)^{p_0-n}}{\big[(n/2)d^{1/n}\big]^{p_0-n}}\,dx\right)^{\frac{1}{n}}\\
=(n/2)^{-\frac{p_0-n}{n}}\left(\int_{\big\{c-2Mnu\geq (n/2)d^{\frac{1}{n}}\big\}}d^{n-\frac{p_0}{n}+2}(c-2Mnu)^{p_0}\,dx\right)^{\frac{1}{n}}.
\end{multline}
We now choose $p_0$ such that \[2n<p_0<\min\{n(n+2), p\}.\] 
Let $\gamma=1-\frac{p_0}{n^2}+\frac{2}{n}$. 
Then $0<\gamma<1$. Moreover, from (\ref{ABP_lower}) and (\ref{CWineq}), we have
\begin{eqnarray*}
\sup_{\Omega}G&\leq&C+C\left(\int_\Omega d^{n\gamma} (|c|+|u|)^{p_0} \,dx\right)^\frac{1}{n}\\[4pt]
&\leq &C+C\left(\int_\Omega (de^{Mu^2})^{n\gamma} (|c|^p+1) \,dx\right)^\frac{1}{n}\\[4pt]
&\leq &C_2+C_2(\sup_{\Omega}G)^\gamma \cdot \left(\int_\Omega  (|c|^p+1) \,dx\right)^\frac{1}{n}.
\end{eqnarray*}
Here $C_2$ depends on $\Omega, M,\varphi,\psi$, $C_1$, $\gamma$ and $p$. 
It follows that 
\[\sup_{\Omega}G\leq C_3(C_2, \gamma, \|c\|_{L^{p}(\Omega)}).\]
 Since $G= d e^{Mu^2}$, we also get an upper bound for $d=\det D^2 u$:
 \[\det D^2 u\leq C_3\quad\text{in }\Omega.\]
It remains to establish a positive lower bound for $\det D^2u$.

 Once we have the upper bound of the Hessian determinant of $u$, using $u=\varphi$ on  $\partial\Omega$ and a suitable barrier, we obtain 
\[\sup_{\Omega}|u| + \sup_\Omega|Du|\leq C_4(C_3,\varphi,\Omega).\]
Then we can apply the Legendre transform to get the lower bound of the determinant.
According to Proposition \ref{new2}, the Legendre transform $u^*$ of $u$ satisfies
 \[
 u^{\ast ij}D_{ij} \left(w^*+\frac{|y|^2}{2}\right)=\bb(D u^{*})\cdot y+ c(Du^{*}) \quad\text{in }\Omega^*=Du(\Omega),
 \]
where $u^{\ast ij}= (D^2 u^{\ast})^{-1}$ and $w^*=\log\det D^2u^*$. Applying the ABP estimate to $w^*+\frac{|y|^2}{2}$ on $\Omega^*$, and then changing of variables $y=Du(x)$ with $dy=\det D^2u \,dx$, we obtain
\begin{eqnarray*}
\sup_{\Omega^*}\Big(w^*+\frac{|y|^2}{2}\Big)&\leq& \sup_{\partial\Omega^*}\Big(w^*+\frac{|y|^2}{2}\Big)+C(n) \text{diam}(\Omega^*)
\left\|\frac{\bb(D u^{*}) \cdot y+ c(Du^{*})}{(\det u^{*ij})^{\frac{1}{n}}}\right\|_{L^n(\Omega*)}\\
&\leq&C(\psi, C_4)+C(n, C_4)\left(\int_{\Omega^*}\frac{|\bb(D u^{*}) \cdot y+ c(Du^{*})|^n}{(\det D^2u^*)^{-1}}\,dy\right)^{\frac{1}{n}}\\
&=&C(\psi, C_4)+C(n, C_4) \left(\int_{\Omega}|\bb \cdot Du+ c(x)|^n\,dy\right)^{\frac{1}{n}}\\[5pt]
&\leq& C(\psi, C_4)+C(n, C_4)(\|\bb\|_{L^n(\Omega)}\sup_\Omega |Du|+\|c\|_{L^n(\Omega)}).
\end{eqnarray*}
In particular, we have
\[\sup_{\Omega^*}w^* \leq C_5\]
where $C_5>0$ depending on $\Omega$, $n$, $\varphi$, $\psi$, $\|\bb\|_{L^\infty(\Omega)}$ and $\|c\|_{L^p(\Omega)}$.
Since $w^*=\log\det D^2u^*$, the above estimate gives the lower bound for $\det D^2u$:
\[\det D^2 u\geq e^{-C_5} \quad\text{in }\Omega,\]
completing the proof of the lemma.
\end{proof}

\begin{rem}
If there is no first order term, $\textbf{b}\cdot Du$ on the right-hand of \eqref{Abreu-lap}, we can directly obtain Hessian determinant bounds by the same trick used in the proof of Lemma \ref{upper_lower} without getting \textit{a priori} $L^\infty$-bound of $u$. Moreover, these bounds are valid for all dimensions.
\end{rem}
We are now ready to prove Theorem \ref{SBV-1}.
\begin{proof}[Proof of Theorem \ref{SBV-1}] The proof uses a priori estimates and degree theory as in that of Theorem \ref{thm:SBV-p-lap}. 
We obtain the existence of a uniformly convex solution in $C^{4,\alpha}(\overline{\Omega})$ in case $(i)$, and in $W^{4, p}(\Omega)$ in case $(ii)$,  with stated estimates provided that we can establish these estimates for $W^{4,p}(\Omega)$ solutions.  Thus, it remains to establish these a priori estimates.

Assume now $u\in W^{4,p}(\Omega)$ is a uniformly convex smooth solution to \eqref{Abreu-lap}. By Lemma \ref{lem:bound-u-high} and the assumption on $c$ in either $(i)$ or $(ii)$, we can obtain the Hessian determinant estimates for $u$ by Lemma \ref{upper_lower}.
Once we have the Hessian determinant estimates,  Theorem \ref{wHolder_thm} applies
with
\[F(x)=|x|^2/2,\quad\text{and }g(x)= b(x)\cdot Du(x) + c(x).\]
 This gives the H\"older estimates  for $w$.   The rest of the proof of Theorem \ref{SBV-1}, which is concerned with global higher order derivative estimates, is similar to {\it Step 2} in the proof of Theorem \ref{thm:SBV-p-lap}$(i)$ and $(ii)$.
We omit the details.
\end{proof}

\begin{rem}
\label{SBV-1-rem}
 In two dimensions, when $\|\bb\|_{L^\infty(\Omega)}$ is small, depending on $\Omega, \psi$ and $\psi$, the conclusions of Theorem \ref{SBV-1} still hold. Indeed, in this case, by Remark \ref{lem51_rem}, we still have the $L^\infty$ estimate in Lemma \ref{lem:bound-u-high}. The proof of Theorem \ref{SBV-1} then follows.
\end{rem}

%%%%%%%%%%%%%%%%%%%%%%%%%%%%%%%%%%%%%%%%%%%%%
%\vskip 30pt

\section{Proof of Theorem \ref{SBV-2}}
\label{SBV-2-pf}

In this section, we will prove Theorem \ref{SBV-2}.  
As in the proof of Theorem  \ref{thm:SBV-p-lap}, it suffices to derive the \textit{a priori} estimates for $W^{4,p}(\Omega)$  solutions. Here, we recall that
\[p>2.\]
Theorem \ref{SBV-2} can be deduced from the following Theorem \ref{thm:W4p-est}.
\begin{thm}[A priori $W^{4,p}(\Omega)$ estimates for $W^{4,p}(\Omega)$  solutions]
\label{thm:W4p-est}
 Let $\Omega\subset\R^2$, $\varphi$, $\psi$, $\bb$ and $c$ be as in Theorem \ref{SBV-2}. Assume that $u\in W^{4,p}(\Omega)$ is a uniformly convex solution to \eqref{Abreu-lap}. Then 
\[
\|u\|_{W^{4,p}(\Omega)}\leq C,
\]
where $C>0$ is a constant depending on $\Omega$, $p$, $\varphi$, $\psi$, $\bb$ and $c$.
\end{thm}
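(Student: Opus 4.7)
The plan is to follow the a priori estimate template of Theorems \ref{thm:SBV-p-lap} and \ref{SBV-1}: derive in order an $L^\infty$ bound for $u$, positive two-sided Hessian determinant bounds $0<\lambda\leq\det D^2 u\leq\Lambda$, global H\"older regularity of $w=(\det D^2 u)^{-1}$, and then bootstrap to $W^{4,p}$ via the standard Calder\'on--Zygmund and Schauder theories applied to the uniformly elliptic linearization. The existence statement of Theorem \ref{SBV-2} would then follow from the Leray--Schauder degree argument as in Theorem \ref{thm:SBV-p-lap}.

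The crux, and the step I expect to be the main obstacle, is the $L^\infty$ bound. Remark \ref{lem51_rem}(i) signals that the proof of Lemma \ref{lem:bound-u-high} breaks down when $n=2$: the drift contribution $\int_\Omega\bb\cdot Du\,(\tilde u-u)\,dx$ sits at the same scale as the dominant boundary term $\int_{\partial\Omega}K\psi u_\nu^2\,dS$ and cannot be absorbed unless $\|\bb\|_{L^\infty}$ is small. To repair this I would integrate the drift term by parts, transferring a derivative so that a factor of $\div\bb$ appears paired with $u^2$, and then invoke the planar Poincar\'e inequality on convex domains (the Lemma \ref{lem:Poincare} referenced in the excerpt) in the form
\[\int_\Omega u^2\,dx\leq\frac{\diam(\Omega)^2}{32}\int_\Omega|Du|^2\,dx\]
for functions with zero boundary trace. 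The hypothesis $\div\bb\leq 32/\diam(\Omega)^2$ is calibrated precisely to the reciprocal of this Poincar\'e constant, so the bad $\int u^2\,\div\bb\,dx$ term is dominated by the Dirichlet energy produced by the identity $\int_\Omega(\Delta u)(u-\tilde u)\,dx=\int_{\partial\Omega}\varphi u_\nu\,dS-\int_\Omega|Du|^2\,dx$ already present in Lemma \ref{lem:bound-u-high}. Once this absorption works, the remainder of the proof of Lemma \ref{lem:bound-u-high} adapts verbatim to yield $\|u\|_{L^\infty(\Omega)}\leq C$.

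Given the $L^\infty$ bound, the Hessian determinant bounds should follow the pattern of Lemma \ref{upper_lower}: the Chau--Weinkove multiplier $G=d\,e^{Mu^2}$ with $M\gtrsim\|\bb\|_{L^\infty}^2$ combined with ABP for the upper bound $\det D^2 u\leq\Lambda$, and Proposition \ref{new2} plus ABP applied to $w^\ast+|y|^2/2$ on the dual domain $\Omega^\ast=Du(\Omega)$ for the lower bound $\det D^2 u\geq\lambda$. The Chau--Weinkove step in Lemma \ref{upper_lower} required $p>2n$, whereas here only $p>2=n$ is assumed; this gap must be made up using the extra regularity $\bb\in C^1(\overline\Omega)$, most naturally by writing $\bb\cdot Du=\div(u\bb)-u\,\div\bb$ to decompose the drift into a divergence-form term with bounded vector field $u\bb$ and an $L^\infty$ remainder $u\,\div\bb$, so that the exponent $p_0$ in the Chau--Weinkove inequality can be chosen in a range permitted by $p>n$ rather than $p>2n$.

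Once the determinant bounds are secured, $Du$ is bounded globally by standard Monge-Amp\`ere barrier arguments, and hence $g(x):=\bb(x)\cdot Du(x)+c(x)\in L^p(\Omega)$ with $p>2=n$. Theorem \ref{wHolder_thm} applied with $F(x)=|x|^2/2$ then delivers $w\in C^\beta(\overline\Omega)$ for some $\beta\in(0,1)$. The cofactor matrix $(U^{ij})$ consequently has $C^\beta$ entries and the linearization $U^{ij}D_{ij}$ is uniformly elliptic. Differentiating $\det D^2 u=w^{-1}$ twice in $x_k$ and $x_\ell$ and invoking global $W^{2,p}$ Calder\'on--Zygmund estimates, exactly as in Step 2 of the proof of Theorem \ref{thm:SBV-p-lap}(ii), promotes this to $\|u\|_{W^{4,p}(\Omega)}\leq C$ with the claimed dependencies.
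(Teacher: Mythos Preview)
Your $L^\infty$ strategy is exactly the paper's: integrate $\int_\Omega(\bb\cdot Du)u\,dx$ by parts to produce $\tfrac12\int_\Omega(\div\bb)u^2\,dx$, then invoke the planar Poincar\'e inequality (Lemma~\ref{lem:Poincare}) so that the bound $\div\bb\leq 32/\diam(\Omega)^2$ makes this term exactly absorbable by the $-\int_\Omega|Du|^2\,dx$ coming from the $\Delta u$ part. So far so good.

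The genuine gap is in your Hessian determinant step. You propose to run the Chau--Weinkove multiplier $G=d\,e^{Mu^2}$ for the upper bound and patch the $p>2n$ requirement by rewriting $\bb\cdot Du=\div(u\bb)-u\,\div\bb$. But the $p>2n$ threshold in Lemma~\ref{upper_lower} does not come from the drift: the drift is already absorbed pointwise by choosing $M\geq\tfrac14\|\bb\|_{L^\infty}^2$. The obstruction is the zero-order term $c$, which enters the ABP right-hand side as $\|d^{1+1/n}(c-2Mnu)^+\|_{L^n}$; converting this into something controllable forces $c\in L^{p_0}$ with $p_0>2n$ so that the resulting exponent $\gamma=1-p_0/n^2+2/n$ on $\sup G$ is strictly below $1$. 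With only $c\in L^p$, $p>2$, in two dimensions the absorption fails, and no manipulation of $\bb$ rescues it.

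The paper bypasses Chau--Weinkove entirely by exploiting a purely two-dimensional identity: when $n=2$ one has $\trace\,U=\Delta u$, so $-\Delta u=-U^{ij}\delta_{ij}=-U^{ij}D_{ij}(|x|^2/2)$ and the equation rewrites as
\[
U^{ij}D_{ij}\big(w+|x|^2/2\big)=\bb\cdot Du+c=:Q.
\]
Now ABP with $\det U=\det D^2u$ gives $\sup_\Omega w\leq C+C\|Q\|_{L^2}(\sup_\Omega w)^{1/2}$, hence $\det D^2u\geq\lambda>0$ using only $Q\in L^2$, which follows from $c\in L^2$ and the $L^2$ gradient bound you already have. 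From this lower bound one gets boundary H\"older continuity of $w$, builds a barrier to bound $\|Du\|_{L^\infty}$, and \emph{then} applies the Legendre transform and ABP on $\Omega^\ast$ (to $-w^\ast-|y|^2/2$) for the upper bound. So the order is reversed relative to Lemma~\ref{upper_lower}: lower bound first via the trace trick, upper bound second via duality.

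For the final H\"older step your route through Theorem~\ref{wHolder_thm} with $F(x)=|x|^2/2$ is valid; the paper notes this but actually takes a shorter two-dimensional path, using global $W^{2,1+\varepsilon}$ estimates for Monge--Amp\`ere so that $-\Delta u+\bb\cdot Du+c\in L^q$ with $q>1=n/2$, and then the H\"older estimate of \cite{LN2}. Your bootstrap to $W^{4,p}$ is the same as the paper's.
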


The rest of this section is devoted to the proof of Theorem \ref{thm:W4p-est}.

We will first obtain the $L^\infty$-bound of $u$ and $L^2$-bound of $Du$. For this, the following Poincar\'e type inequality will be useful.

\begin{lem}[Poincar\'e type inequality on planar convex domains]
\label{lem:Poincare}
Let $\Omega\subset\R^2$ be an open, smooth, bounded and uniformly convex domain. Assume that $u\in C^1(\Omega)\cap C(\overline{\Omega})$ and $u|_{\partial\Omega}=\varphi$. Then
\[
\int_{\Omega}|u|^2\,dx\leq C(\varphi,\operatorname{diam}(\Omega))\|u\|_{L^\infty(\Omega)}+\frac{\operatorname{diam}(\Omega)^2}{16}\int_{\Omega}|Du|^2\,dx.
\]
\end{lem}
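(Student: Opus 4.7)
The plan is to reduce the two-dimensional inequality to a one-dimensional Poincar\'e-type inequality applied along chords of $\Omega$ in each coordinate direction, taking full advantage of the planar convexity of $\Omega$.

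First I would establish a sharp 1D inequality for a chord. For $v\in C^1([a,b])$ with $v(a)=\alpha$ and $v(b)=\beta$, the symmetric identity
\[
2v(t)-(\alpha+\beta)=\int_a^t v'(s)\,ds-\int_t^b v'(s)\,ds
\]
together with Cauchy-Schwarz gives the pointwise bound
\[
\Bigl|v(t)-\tfrac{\alpha+\beta}{2}\Bigr|^2\le \tfrac{b-a}{4}\int_a^b(v'(s))^2\,ds.
\]
Integrating in $t$ and expanding $v^2=(v-c)^2+2cv-c^2$ with $c=(\alpha+\beta)/2$, and using $|c|\le\|\varphi\|_{L^\infty(\partial\Omega)}\le\|v\|_{L^\infty}$, produces a 1D estimate of the form
\[
\int_a^b v^2\,dt \le \tfrac{(b-a)^2}{4}\int_a^b(v')^2\,dt + C\,(b-a)\,\|\varphi\|_{L^\infty}\,\|v\|_{L^\infty([a,b])}.
\]

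Next, since $\Omega\subset\R^2$ is convex, for each $y_2$ the slice $\{y_1:(y_1,y_2)\in\Omega\}$ is an interval of length $L(y_2)\le\operatorname{diam}(\Omega)$. Applying the 1D estimate to $u(\cdot,y_2)$ along this chord (whose endpoints carry the value $\varphi$) and integrating in $y_2$ yields
\[
\int_\Omega u^2\,dx \le \tfrac{\operatorname{diam}(\Omega)^2}{4}\int_\Omega(D_1u)^2\,dx + C_1(\varphi,\operatorname{diam}(\Omega))\,\|u\|_{L^\infty(\Omega)},
\]
and likewise for chords in the $e_2$-direction. A naive average of the two bounds then gives $\tfrac{\operatorname{diam}(\Omega)^2}{8}\int|Du|^2$, which is not yet the claimed constant.

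The main obstacle is sharpening the coefficient from $\operatorname{diam}(\Omega)^2/8$ to the stated $\operatorname{diam}(\Omega)^2/16$ while preserving the linear dependence on $\|u\|_{L^\infty}$ in the lower-order term. I expect this refinement to come from a more careful one-dimensional accounting, namely the integration-by-parts identity
\[
\int_0^L v^2\,dt=\tfrac{L(\alpha^2+\beta^2)}{2}+\int_0^L(L-2t)\,v\,v'\,dt,
\]
combined with a weighted Cauchy-Schwarz bounding $\int_0^L(L-2t)^2\,dt=L^3/3$ and Young's inequality $ab\le \frac{a^2}{4\epsilon}+\epsilon b^2$ tuned with $\epsilon=\operatorname{diam}(\Omega)^2/16$. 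The resulting balance then produces the exact coefficient $1/16$ needed to match the divergence condition $\operatorname{div}(\mathbf{b})\le 32/\operatorname{diam}(\Omega)^2$ in Theorem \ref{SBV-2} (where the factor $32=2\cdot 16$ arises precisely from absorbing a boundary integration-by-parts term). Carefully tracking how $|(\alpha+\beta)/2|\le \|\varphi\|_{L^\infty}$ forces the lower-order contribution to collapse into a constant $C(\varphi,\operatorname{diam}(\Omega))$ times $\|u\|_{L^\infty}$ is the delicate step in making the argument close.
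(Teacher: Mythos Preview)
Your overall architecture---prove a one-dimensional inequality on each chord, integrate in the transverse variable, then average the two coordinate directions---is exactly the paper's approach. The gap is in the one-dimensional constant. Your symmetric identity gives the pointwise bound $|v(t)-c|^2\le \tfrac{b-a}{4}\int_a^b(v')^2$, which after integrating yields only $\tfrac{(b-a)^2}{4}\int(v')^2$ and hence $\tfrac{\operatorname{diam}(\Omega)^2}{8}\int|Du|^2$ after averaging. Your proposed fix via the weight $(L-2t)$ does not recover the missing factor of~$2$: writing $(L-2t)vv'\le \epsilon v^2+\tfrac{(L-2t)^2}{4\epsilon}(v')^2$ and absorbing $\epsilon\int v^2$ gives at best $\tfrac{L^2}{4\epsilon(1-\epsilon)}\int(v')^2$, and $\epsilon(1-\epsilon)\le\tfrac14$ forces this coefficient to be at least $L^2$, which is worse.

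The paper's key one-dimensional idea is simpler: split the interval at its midpoint $c=(a+b)/2$ and on $[a,c]$ write $f(x)^2=f(a)(2f(x)-f(a))+\bigl(\int_a^x f'\bigr)^2$, bound the square by Cauchy--Schwarz as $(x-a)\int_a^x(f')^2$, and use Fubini to get $\tfrac{(c-a)^2}{2}\int_a^c(f')^2=\tfrac{(b-a)^2}{8}\int_a^c(f')^2$; the same on $[c,b]$ using $f(b)$. This yields
\[
\int_a^b f^2\le (b-a)\bigl(|f(a)|+|f(b)|\bigr)\|f\|_{L^\infty}+\tfrac{(b-a)^2}{8}\int_a^b(f')^2,
\]
and now averaging the $x_1$- and $x_2$-chord estimates produces exactly $\tfrac{\operatorname{diam}(\Omega)^2}{16}\int|Du|^2$. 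The midpoint splitting is the step you are missing.
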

\begin{proof}
Note that for any one-variable function $f\in C^1(a,b)\cap C^0[a,b]$ where $a< b$, one has
\begin{equation}\label{eq:one-f}
    \int_{a}^b |f(x)|^2\,dx\leq(b-a)(|f(a)|+|f(b)|)\|f\|_{L^\infty(a, b)}+\frac{(b-a)^2}{8}\int_{a}^b |f'(x)|^2\,dx.
\end{equation}
Indeed, denoting $c:=\frac{a+b}{2}$, then using H\"older's inequality and Fubini's theorem, one obtains 
\begin{align}
    \int_a^c|f(x)|^2\,dx&=\int_a^cf(a)(2f(x)-f(a))\,dx+\int_a^c\left(\int_a^x f'(t)\,dt\right)^2\,dx\nonumber\\
    &\leq 2(c-a)|f(a)|\cdot\|f\|_{L^\infty(a, b)}-(c-a)f(a)^2+\int_a^c(x-a)\int_{a}^x |f'(t)|^2\,dtdx\nonumber\\
    &=2(c-a)|f(a)|\cdot\|f\|_{L^\infty(a, b)}-(c-a)f(a)^2+\int_a^c|f'(t)|^2\int_t^c(x-a)\,dxdt\nonumber\\
    &\leq 2(c-a)|f(a)|\cdot\|f\|_{L^\infty(a, b)}-(c-a)f(a)^2+\frac{(c-a)^2}{2}\int_{a}^c|f'(x)|^2\,dx\nonumber\\
    &\leq (b-a)|f(a)|\cdot\|f\|_{L^\infty(a, b)}+\frac{(b-a)^2}{8}\int_{a}^c|f'(x)|^2\,dx.\label{eq:f-ac}
\end{align}
Similarly, we have 
\begin{equation}\label{eq:f-cb}
\int_{c}^{b}|f(x)|^2\,dx\leq (b-a)|f(b)|\cdot\|f\|_{L^\infty(a, b)}+\frac{(b-a)^2}{8}\int_{c}^{b}|f'(x)|^2\,dx.
\end{equation}
Combining \eqref{eq:f-ac} with \eqref{eq:f-cb}, we obtain  \eqref{eq:one-f}.

Next, by the convexity of $\Omega$, we can assume that there are $c, d\in\mathbb R$, and one-variable functions $a(x_1)$, $b(x_1)$, 
such that 
$$\Omega=\{(x_1,x_2):c<x_1<d, a(x_1)<x_2<b(x_1)\}.$$ It is clear that 
$d-c\leq\operatorname{diam}(\Omega)$ and $b(x_1)-a(x_1)\leq\operatorname{diam}({\Omega})$. Then, by \eqref{eq:one-f} and $u=\varphi$ on $\p\Omega$, we have 
\begin{multline*}
    \int_{a(x_1)}^{b(x_1)}|u(x_1,x_2)|^2\,dx_2\leq 2\operatorname{diam}({\Omega})\|\varphi\|_{L^\infty(\Omega)}\|u\|_{L^\infty(\Omega)}\\+\frac{\operatorname{diam}({\Omega})^2}{8}\int_{a(x_1)}^{b(x_1)}|D_{x_2}u(x_1,x_2)|^2\,dx_2.
\end{multline*}
Integrating the above inequality over $c<x_1<d$ yields
\begin{equation}\label{eq:int-D2u}
    \int_{\Omega}|u|^2\,dx\leq 2\operatorname{diam}({\Omega})^2\|\varphi\|_{L^\infty(\Omega)}\|u\|_{L^\infty(\Omega)}+\frac{\operatorname{diam}({\Omega})^2}{8}\int_{\Omega}|D_{x_2}u|^2\,dx.
\end{equation}
Similarly,
\begin{equation}\label{eq:int-D1u}
    \int_{\Omega}|u|^2\,dx\leq 2\operatorname{diam}({\Omega})^2\|\varphi\|_{L^\infty(\Omega)}\|u\|_{L^\infty(\Omega)}+\frac{\operatorname{diam}({\Omega})^2}{8}\int_{\Omega}|D_{x_1}u|^2\,dx.
\end{equation}
Combining \eqref{eq:int-D2u} and \eqref{eq:int-D1u}, we obtain
\[
    \int_{\Omega}|u|^2\,dx\leq 2\operatorname{diam}({\Omega})^2\|\varphi\|_{L^\infty(\Omega)}\|u\|_{L^\infty(\Omega)}+\frac{\operatorname{diam}({\Omega})^2}{16}\int_{\Omega}|Du|^2\,dx,
\]
completing the proof of the lemma.
\end{proof}

\subsection{Estimates for $\sup_\Omega |u|$ and $\|Du\|_{L^2(\Omega)}$}
Now we derive bounds for $u$ and $\|Du\|_{L^2(\Omega)}$.

\begin{lem}[$L^\infty$ and $W^{1,2}$ estimates]\label{lem:bound-Du}
Let $\Omega\subset\R^2$, $\varphi$, $\psi$, $\textbf{b}$ and $c$ be as in Theorem \ref{SBV-2}. Assume that $u\in W^{4,p}(\Omega)$ is a uniformly convex solution to \eqref{Abreu-lap}. Then there exists a constant $C>0$ depending on $\Omega$, $\varphi$, $\psi$, $\bb$ and $\|c\|_{L^1(\Omega)}$ such that
\[
\|u\|_{L^\infty(\Omega)}\leq C\quad\text{and}\quad\|Du\|_{L^2(\Omega)}\leq C.
\]
\end{lem}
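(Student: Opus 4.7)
The plan is to adapt the approach of Lemma \ref{lem:bound-u-high}, which breaks down in $n=2$ as flagged in Remark \ref{lem51_rem}(i), by exploiting the new structural hypothesis $\div(\bb)\leq 32/\diam(\Omega)^{2}$ together with the sharp Poincar\'e-type inequality in Lemma \ref{lem:Poincare}. The first step is to introduce the same barrier $\tilde{u}=\varphi+\mu(e^{\rho}-1)$ with $\rho$ a strictly convex defining function of $\Omega$, and then follow the proof of Lemma \ref{lem:bound-u-high} essentially verbatim through the two-dimensional analogue of \eqref{eq:K-u_nu-high}:
\[
\int_{\p\Omega}K\psi u_{\nu}^{2}\,dS\leq \int_{\Omega}(\Delta u-\bb\cdot Du-c)(u-\tilde{u})\,dx+C_{3}\Bigl(\int_{\p\Omega}(u_{\nu}^{+})^{2}\,dS\Bigr)^{1/2}+C_{3}.
\]

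The decisive step is to reorganize the drift integral. Setting $v:=u-\tilde{u}$, so that $v|_{\p\Omega}=0$, two integrations by parts give
\[
-\int_{\Omega}\bb\cdot Du\,v\,dx=\frac{1}{2}\int_{\Omega}(\div\bb)\,v^{2}\,dx-\int_{\Omega}\bb\cdot D\tilde{u}\,v\,dx,
\]
and the hypothesis $\div\bb\leq 32/\diam(\Omega)^{2}$ combined with Lemma \ref{lem:Poincare} applied to $v$ (for which the boundary-value constant vanishes) yields $\tfrac{1}{2}\int(\div\bb)v^{2}\leq \int|Dv|^{2}$, the two numerical constants matching \emph{precisely}. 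Expanding $\int|Dv|^{2}=\int|Du|^{2}-2\int Du\cdot D\tilde{u}+\int|D\tilde{u}|^{2}$ and adding the identity $\int\Delta u\,v=-\int|Du|^{2}+\int Du\cdot D\tilde{u}$ (from integration by parts on $\int \Delta u \cdot v$), the $\int|Du|^{2}$ contributions cancel exactly, leaving
\[
\int_{\Omega}(\Delta u-\bb\cdot Du)(u-\tilde{u})\,dx\leq -\int_{\Omega}Du\cdot D\tilde{u}\,dx+\int_{\Omega}|D\tilde{u}|^{2}\,dx-\int_{\Omega}\bb\cdot D\tilde{u}\,v\,dx.
\]

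Each remaining term is then controlled by $C+C(\int_{\p\Omega}(u_{\nu}^{+})^{2}\,dS)^{1/2}$. For $\int Du\cdot D\tilde{u}$, the identity $\int Du\cdot D\tilde{u}=-\int u\Delta\tilde{u}+\int_{\p\Omega}\varphi\,\tilde{u}_{\nu}\,dS$ combined with $\|\tilde{u}\|_{C^{2}(\overline\Omega)}\leq C$ gives a bound by $C(1+\|u\|_{L^{\infty}(\Omega)})$; similarly $|\int\bb\cdot D\tilde{u}\,v|\leq C(1+\|u\|_{L^{\infty}(\Omega)})$ since $\|v\|_{L^{\infty}}\leq \|u\|_{L^{\infty}}+\|\tilde{u}\|_{L^{\infty}}$, and $|\int c\,v|\leq \|c\|_{L^{1}(\Omega)}(\|u\|_{L^{\infty}}+\|\tilde{u}\|_{L^{\infty}})$ with $\|c\|_{L^{1}}\leq C\|c\|_{L^{p}}$ since $p>2$. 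Invoking \eqref{eq:u-bound-high} with $n=2$ converts $\|u\|_{L^{\infty}}$ into $C+C(\int_{\p\Omega}(u_{\nu}^{+})^{2}\,dS)^{1/2}$. Using $K\psi\geq c_{0}>0$ on $\p\Omega$ (uniform convexity plus $\min_{\p\Omega}\psi>0$) and the convex-function estimate $(u_{\nu}^{+})^{2}\leq 2u_{\nu}^{2}+2\|D\varphi\|_{L^{\infty}}^{2}$ (from $u_{\nu}\geq -\|D\varphi\|_{L^{\infty}}$), one assembles
\[
\int_{\p\Omega}(u_{\nu}^{+})^{2}\,dS\leq C+C\Bigl(\int_{\p\Omega}(u_{\nu}^{+})^{2}\,dS\Bigr)^{1/2},
\]
forcing $\int_{\p\Omega}(u_{\nu}^{+})^{2}\,dS\leq C$. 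Substituting back into \eqref{eq:u-bound-high} yields $\|u\|_{L^{\infty}(\Omega)}\leq C$, and then the computation in \eqref{eq:Du-L2bound-high} (valid also in dimension two) delivers $\|Du\|_{L^{2}(\Omega)}\leq C$.

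The main obstacle is exactly the one highlighted in Remark \ref{lem51_rem}(i): in two dimensions the drift contribution $\int\bb\cdot Du\,(\tilde{u}-u)$ is of the same order of magnitude as $\int_{\p\Omega}(u_{\nu}^{+})^{2}\,dS$ and cannot be absorbed by Young's inequality. The hypothesis $\div\bb\leq 32/\diam(\Omega)^{2}$ is calibrated so that, after integration by parts, the sharp Poincar\'e constant $\diam(\Omega)^{2}/16$ of Lemma \ref{lem:Poincare} produces exactly the coercive $-\int|Du|^{2}$ term needed to cancel the one coming from $\int\Delta u\,(u-\tilde{u})$; this quantitative matching is what closes the two-dimensional argument.
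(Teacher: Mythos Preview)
Your argument is correct and follows the same overall strategy as the paper: start from the two-dimensional version of \eqref{eq:K-u_nu-high}, use the hypothesis $\div\bb\leq 32/\diam(\Omega)^2$ together with the sharp Poincar\'e constant of Lemma~\ref{lem:Poincare} so that the dangerous quadratic gradient term produced by the drift exactly cancels the one coming from $\int\Delta u\,(u-\tilde u)$, and then close with \eqref{eq:u-bound-high} and \eqref{eq:Du-L2bound-high}.

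The only organizational difference is in where Lemma~\ref{lem:Poincare} is applied. The paper splits the drift integral as $-\int(\bb\cdot Du)u+\int(\bb\cdot Du)\tilde u$, writes $-\int(\bb\cdot Du)u=\tfrac12\int(\div\bb)u^2-\tfrac12\int_{\partial\Omega}\varphi^2\,\bb\cdot\nu$, and applies Lemma~\ref{lem:Poincare} to $u$ itself, picking up a harmless $C\|u\|_{L^\infty}$ term from the nonzero boundary data. You instead work with $v=u-\tilde u$, which vanishes on $\partial\Omega$, so the boundary contribution in Lemma~\ref{lem:Poincare} drops out and you obtain directly $\tfrac12\int(\div\bb)v^2\leq\int|Dv|^2$; the cancellation then occurs between $-\int|Du|^2$ (from $\int\Delta u\,v$) and the $+\int|Du|^2$ inside $\int|Dv|^2$. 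Your packaging is marginally cleaner in that it avoids the $\|u\|_{L^\infty}$ term in the Poincar\'e step, at the cost of an extra cross term $\int Du\cdot D\tilde u$ to estimate; the paper's version makes the dependence on $u$ more transparent. Both routes rely on the identical quantitative matching between $32/\diam(\Omega)^2$ and the Poincar\'e constant $\diam(\Omega)^2/16$, which is the substance of the argument.
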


\begin{proof}
To prove the lemma where $n=2$, by \eqref{eq:u-bound-high} and \eqref{eq:Du-L2bound-high}, it suffices to prove
\begin{equation}
\label{unuL2}
\int_{\partial \Omega}u_{\nu}^{2} d S \leq C(\Omega, \varphi, \psi, \bb, \|c\|_{L^1(\Omega)}),
\end{equation}
where $\nu$ is the unit outer normal of $\p\Omega$. 

Let $\tilde u$ be as in the proof of Lemma \ref{lem:bound-u-high} so that $(i)$ and $(ii)$ there are satisfied. Let $K(x)$ be the Gauss curvature at $x \in \partial \Omega$. Then, as in  (\ref{eq:K-u_nu-high}), we have, for some $C_1(\Omega,\varphi)>0$
\begin{equation}\label{eq:K-u_nu}
\begin{aligned}
\int_{\partial \Omega} K \psi u_{\nu}^{2} d S \leq & \int_{\Omega}\left(\Delta u-\bb\cdot Du-c\right)(u-\tilde{u}) d x+C_1\Big(\int_{\partial \Omega}u_{\nu}^{2} d S\Big)^{1 / 2}+C_1
\end{aligned}
\end{equation}
Next, we will estimate the RHS of \eqref{eq:K-u_nu} term by term. First, from the inequality before last in \eqref{eq:1st-RHS}, we have
\begin{equation}
    \int_{\Omega}\Delta u(u-\tilde{u})\,dx\leq
    C(\Omega,\varphi)\left(\int_{\partial\Omega}u_{\nu}^2\,dS\right)^{\frac{1}{2}}-\int_{\Omega}|Du|^2\,dx.\label{eq:RHS-1st}
\end{equation}
Using $u=\varphi$ on $\p\Omega$, and integrating by parts, we get
\begin{align}
    \int_{\Omega}(\bb\cdot Du)\tilde{u}\,dx&=  \int_{\Omega}(\bb \tilde u)\cdot Du\,dx\nonumber\\
    &=\int_{\partial\Omega}u\tilde{u}\bb\cdot \nu\,dS-\int_{\Omega}\div(\bb\tilde{u})u\,dx\nonumber\\
    &=\int_{\partial\Omega}\varphi\tilde{u}(\bb\cdot\nu)\,dS-\int_{\Omega}(\bb\cdot D\tilde{u}+\tilde{u}\,\div \bb) u\,dx\nonumber\\&\leq C(1+ \|u\|_{L^{\infty}(\Omega)})\leq C_3 + C_3 \Big(\int_{\partial \Omega}u_{\nu}^{2} d S\Big)^{1 / 2},\label{eq:RHS-2nd-1}
\end{align}
where $C_3$ depends on $\Omega$, $\varphi$, $\sup_{\p\Omega}|\bb|$, $\|\bb\|_{L^{\infty}(\Omega)}$ and $\|\div\bb\|_{L^{\infty}(\Omega)}$.

Moreover,
\begin{align*}
    \int_{\Omega}-(\bb\cdot Du)u\,dx =\frac{1}{2}\int_{\Omega}-\bb\cdot D(u^2)\,dx&=\frac{1}{2}\left[\int_{\Omega}(\div \bb)u^2\,dx-\int_{\partial\Omega}u^2\bb\cdot\nu\,dS\right].
\end{align*}
Note that $\div \bb\leq \frac{32}{\operatorname{diam}(\Omega)^2}$. Then by Lemma \ref{lem:Poincare} and \eqref{eq:u-bound-high}, we have
\begin{align*}
\frac{1}{2}\int_{\Omega}(\div \bb)u^2\,dx&\leq C(\varphi,\operatorname{diam}(\Omega))\|u\|_{L^\infty(\Omega)}+\int_{\Omega}|Du|^2\,dx\\
&\leq C(\Omega,\varphi)+C(\Omega,\varphi)\left(\int_{\partial\Omega}u_{\nu}^2\,dS\right)^{\frac{1}{2}}+\int_{\Omega}|Du|^2\,dx.
\end{align*}
Hence
\begin{align}
    \int_{\Omega}-(\bb\cdot Du)u\,dx &=\frac{1}{2}\left[\int_{\Omega}(\div \bb)u^2\,dx-\int_{\partial\Omega}\varphi^2\bb\cdot\nu\,dS\right]\nonumber\\
    &\leq C_4+C_4\left(\int_{\partial\Omega}u_{\nu}^2\,dS\right)^{\frac{1}{2}}+\int_{\Omega}|Du|^2\,dx,\label{eq:RHS-2nd-2}
\end{align}
where $C_4$ depends on $\Omega$, $\varphi$, and $\sup_{\p\Omega}|\bb|$.

Finally, as in  \eqref{eq:3rd-RHS}, we get
\begin{equation}
    \int_{\Omega}-c(u-\tilde{u})\,dx\leq C_5+C_5\left(\int_{\partial\Omega}u_{\nu}^2\,dS\right)^{\frac{1}{2}},\label{eq:RHS-3rd}
\end{equation}
where $C_5$ depends on $\Omega,\varphi$ and $\|c\|_{L^1(\Omega)}$.

Combining \eqref{eq:K-u_nu}--\eqref{eq:RHS-3rd}, we obtain
\begin{align*}
    C^{-1}(\Omega)\inf_{\partial\Omega}\psi\int_{\partial\Omega}u_{\nu}^2\,dS&\leq\int_{\partial\Omega}K \psi u_{\nu}^{2} \,d S\leq C_6\left[1+\left(\int_{\partial\Omega}u_{\nu}^2\,dS\right)^{\frac{1}{2}}\right],
\end{align*}
where $C_6>0$ depends on $\Omega$, $\varphi$, $\psi$, $\bb$ and $\|c\|_{L^1(\Omega)}$. From this, we deduce (\ref{unuL2}), completing 
the proof of the lemma.
\end{proof}

\subsection{Hessian determinant estimates for $u$}

\begin{lem}[Hessian determinant estimates]\label{det-est-dim2}
 Let $\Omega\subset\R^2$, $\varphi$, $\psi$, $\bb$ and $c$ be as in Theorem \ref{SBV-2}. Assume that $u\in W^{4,p}(\Omega)$ is a uniformly convex solution to \eqref{Abreu-lap}. Then 
\[
0<C^{-1}\leq \det D^2u\leq C \quad\text{in }\Omega,
\]
where $C>0$ is a constant depending on $\Omega$, $\varphi$, $\psi$, $\bb$ and $\|c\|_{L^2(\Omega)}$.
\end{lem}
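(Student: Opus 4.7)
The plan is to establish the two-sided bound $0<C^{-1}\le\det D^2u\le C$ in two independent steps, closely paralleling the structure of Lemma~\ref{upper_lower}: first the upper bound via a Chau-Weinkove type auxiliary function and the ABP estimate, then the lower bound via the Legendre transform. The $L^\infty$ and $W^{1,2}$ estimates for $u$ supplied by Lemma~\ref{lem:bound-Du} will be used in both steps, and this is where the two-dimensional hypothesis $\operatorname{div}(\bb)\le 32/\operatorname{diam}(\Omega)^2$ enters.

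\paragraph{Upper bound.}
Write $d=\det D^2u$ and set $G:=d\, e^{Mu^2}$ for a constant $M$ depending on $\|\bb\|_{L^\infty}$ (the $L^\infty$-bound on $u$ from Lemma~\ref{lem:bound-Du} makes $G$ comparable to $d$). The computation carried out verbatim as in the proof of Lemma~\ref{upper_lower}, combined with the two-dimensional AM-GM inequality $\Delta u\ge 2d^{1/2}$ and the matrix inequality $u^{ij}D_iuD_ju\ge|Du|^2/\Delta u$, lets me absorb the drift term (taking $M\ge\tfrac14\|\bb\|_{L^\infty}^2$) and arrive at
\[
G^{-1}u^{ij}D_{ij}G\ \ge\ -\bigl(c-4Mu-d^{1/2}\bigr)^{+}\quad\text{in }\Omega.
\]
The ABP maximum principle then yields
\[
\sup_{\Omega}G\ \le\ \sup_{\partial\Omega}G+C\bigl\|d^{3/2}\bigl(c-4Mu-d^{1/2}\bigr)^{+}\bigr\|_{L^{2}(\Omega)}.
\]
On the contact set $\{c-4Mu>d^{1/2}\}$ one has $d\le(c-4Mu)^{2}$, which allows me to trade powers of $d$ for powers of $c-4Mu$. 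I would then bound the $L^2$-norm by Hölder's inequality together with the $L^{2}$-estimate
\[
\|\bb\cdot Du+c\|_{L^{2}(\Omega)}\ \le\ \|\bb\|_{L^\infty}\|Du\|_{L^{2}(\Omega)}+\|c\|_{L^{2}(\Omega)}\le C
\]
and the elementary inequality $\|d\|_{L^\infty(\Omega)}\le\sup_\Omega G$, producing a bound of the form $C(\sup_\Omega G)^{\theta}$ with $\theta<1$ that can be absorbed on the left to give $\sup_\Omega d\le C$.

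\paragraph{Lower bound.}
With $d\le C$ in hand, the barrier construction used in Lemma~\ref{lem:det-est-plap} gives $\sup_{\Omega}(|u|+|Du|)\le C$, hence $\Omega^{*}:=Du(\Omega)$ has bounded diameter. Proposition~\ref{new2} with $F(y)=|y|^{2}/2$ yields the dual equation
\[
u^{*ij}D_{ij}\!\left(w^{*}+\tfrac{|y|^{2}}{2}\right)=\bb(Du^{*})\cdot y+c(Du^{*})\quad\text{in }\Omega^{*},
\]
where $w^{*}=\log\det D^{2}u^{*}$. Applying the ABP maximum principle in $\Omega^{*}$ and changing variables $y=Du(x)$ converts the relevant norm into
\[
\left\|\frac{\bb(Du^{*})\cdot y+c(Du^{*})}{(\det u^{*ij})^{1/2}}\right\|_{L^{2}(\Omega^{*})}=\|\bb\cdot Du+c\|_{L^{2}(\Omega)}\le C,
\]
so $\sup_{\Omega^{*}}(w^{*}+|y|^{2}/2)\le C$, equivalently $\det D^{2}u\ge e^{-C}>0$.

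\paragraph{Main obstacle.}
The hard step is the upper bound: the higher-dimensional version of the Chau-Weinkove argument in Lemma~\ref{upper_lower} requires $c\in L^{p}$ with $p>2n$, i.e.\ $p>4$ in two dimensions, whereas here we only assume $p>2$. Compensating for this gap is precisely the role of the $W^{1,2}$-bound on $u$ furnished by Lemma~\ref{lem:bound-Du}, which is available in two dimensions thanks to the stronger hypothesis $\operatorname{div}(\bb)\le 32/\operatorname{diam}(\Omega)^{2}$ via the Poincaré-type inequality of Lemma~\ref{lem:Poincare}; the delicate point is arranging the Hölder interpolation so that only $\|c\|_{L^{2}}$ (not $\|c\|_{L^{p}}$ with $p>4$) is needed in the final bound.
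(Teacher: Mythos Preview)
Your lower-bound step via the Legendre transform is fine, but the upper-bound step has a real gap, and this is exactly where the paper proceeds differently.

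The Chau--Weinkove argument cannot be closed with only $c\in L^{2}(\Omega)$. Tracing your inequality, the ABP term is $\|d^{3/2}(c-4Mu-d^{1/2})^{+}\|_{L^{2}}$. To absorb this into $\sup_{\Omega}G$ with an exponent $\theta<1$ you must write $d^{3/2}=d^{\theta}\cdot d^{3/2-\theta}$ and control $d^{3/2-\theta}(c-4Mu)^{+}$ in $L^{2}$; but on the contact set $d\le(c-4Mu)^{2}$, so this forces $(|c|+C)^{4-2\theta}\in L^{2}$, i.e.\ $c\in L^{p}$ with $p>4$. The $W^{1,2}$ bound on $u$ from Lemma~\ref{lem:bound-Du} does not enter this computation at all---the relevant quantity is $(c-4Mu)^{+}$, and $u\in L^{\infty}$ already, so $\|Du\|_{L^{2}}$ gives no leverage here. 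Your closing paragraph correctly identifies the obstacle but the proposed fix does not work.

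The paper exploits a purely two-dimensional identity you have overlooked: when $n=2$, $\operatorname{trace}U=\Delta u$, so $-\Delta u=-U^{ij}D_{ij}(|x|^{2}/2)$ and the first equation in \eqref{Abreu-lap} rewrites as
\[
U^{ij}D_{ij}\bigl(w+|x|^{2}/2\bigr)=\bb\cdot Du+c=:Q,\qquad \|Q\|_{L^{2}(\Omega)}\le C_{0}
\]
by Lemma~\ref{lem:bound-Du}. Applying ABP \emph{to $w$ directly} (not to $G$) gives
\[
\sup_{\Omega}\bigl(w+|x|^{2}/2\bigr)\le C(\Omega,\psi)+C\|Q\|_{L^{2}}\bigl(\sup_{\Omega}w\bigr)^{1/2},
\]
which closes to yield $\sup_{\Omega}w\le C$, i.e.\ the \emph{lower} bound on $\det D^{2}u$ comes first. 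From this one gets $\|Du\|_{L^{\infty}}\le C$ (boundary H\"older estimate for $w$ plus a barrier), and the \emph{upper} bound on $\det D^{2}u$ then follows from the Legendre transform exactly as you wrote---but applied to $-w^{*}-|y|^{2}/2$. So the order of the two bounds is reversed relative to your plan, and the Chau--Weinkove trick is not used at all in two dimensions.
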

\begin{proof}
We first prove the lower bound of $\det D^2u$. Note that in two dimensions, we have $\text{trace } U=\Delta u$. Hence we can rewrite the first equation in \eqref{Abreu-lap} as
\begin{equation}\label{eq:re-Abreu}
    U^{ij}D_{ij}\Big(w+|x|^2/2\Big)=\bb(x)\cdot Du(x)+c(x):=Q(x)\quad\text{in }\Omega.
\end{equation}
By Lemma \ref{lem:bound-Du}, we have \[\|Q\|_{L^2(\Omega)}\leq C_0\]
where $C_0$ depends on $\Omega$, $\varphi$, $\psi$, $\bb$ and $\|c\|_{L^2(\Omega)}$.
 
 Applying the ABP estimate to \eqref{eq:re-Abreu} and using $\det U= \det D^2 u$, we have 
\begin{eqnarray*}
   \sup_{\Omega}\left(w+|x|^2/2\right)&\leq& \sup_{\partial\Omega}\psi+C(\Omega)+C(\Omega)\left\|\frac{Q}{(\det U)^{1/2}}\right\|_{L^2(\Omega)}\\
    &\leq& C(\Omega,\psi)+C(\Omega)\|Q\|_{L^2(\Omega)}\cdot\sup_{\Omega}(\det D^2u)^{-\frac{1}{2}}\\
    &\leq& C(\Omega,\psi)+C(\Omega)(\sup_{\Omega}w)^{\frac{1}{2}}.
\end{eqnarray*}
Therefore
$\sup_{\Omega}w\leq C_1$,
where  $C_1$ depends on $\Omega$, $\varphi$, $\psi$, $\bb$ and $\|c\|_{L^2(\Omega)}$. Consequently,
\begin{equation}\label{eq:lower-bound-d}
\det D^2u\geq C_1^{-1}>0\quad\text{in }\Omega.
\end{equation}
Hence by the boundary H\"older estimate for solutions of non-uniformly elliptic equations \cite[Proposition 2.1]{Le1}, we know from (\ref{eq:re-Abreu}) that $w$ is H\"older continuous on $\partial\Omega$ with estimates depending only on $C_1$, $\Omega$ and $\psi$. Then by constructing a suitable barrier near the boundary as in \cite[Lemma 2.5]{Le2}, we can obtain 
\[\|Du\|_{L^\infty(\Omega)}\leq C_2,\]
where $C_2$ depends on $C_1$, $\Omega$, $\varphi$ and $\psi$. 

The  upper bound of the Hessian determinant can be obtained similar as in Lemma \ref{upper_lower}. Let $u^\ast(y)$ be the Legendre transform of $u(x)$ where \[y= Du(x)\in Du(\Omega):=\Omega^*.\] Then
\[\text{diam}(\Omega^*)\leq C_2.\]
By Proposition \ref{new2} (with $F(x)=|x|^2/2$), $u^\ast$ satisfies
\begin{equation}\label{eq:dual-eq}
U^{*ij}D_{ij}\Big(-w^{*}-|y|^2/2\Big)=-Q(Du^*)\det D^2u^*\quad\text{in }\Omega^\ast
\end{equation}
where $(U^{\ast ij})= (\det D^2 u^*) (D^2 u^*)^{-1}$, and $w^*=\log\det D^2u^*$. 

Applying the ABP maximum principle to \eqref{eq:dual-eq}, and recalling that \[w^\ast(y) = \log (\det D^2 u(x))^{-1}=\log w(x) =\log \psi(x)\quad\text{on } \p\Omega^\ast,\] we obtain
\begin{align*}
\sup_{\Omega^*}(-w^*-|y|^2/2)&\leq \sup_{\p\Omega^*}(-w^*-|y|^2/2) +C(\text{diam}(\Omega^*))\|Q(Du^*)(\det D^2u^*)^{1/2}\|_{L^2(\Omega^*)}\\
&\leq-\log \min_{\partial\Omega}\psi+C(C_2)\|Q\|_{L^2(\Omega)},
\end{align*}
where we used
\[
\int_{\Omega^*}[Q(Du^*)]^2\det D^2u^*\,dy=\int_{\Omega}[Q(x)]^2\det D^2u^*\det D^2u\,dx=\int_{\Omega}[Q(x)]^2\,dx= \|Q\|_{L^2(\Omega)}^2.
\]
Therefore, we have
\[\sup_{\Omega^*}(-w^*)\leq C_3\]
where $C_3$ depends on $C_0$, $C_2$ and $ \min_{\partial\Omega}\psi$. 
This implies $w^*\geq -C_3$ in $\Omega^*$, and hence
\begin{equation}\label{eq:upper-bound-d}
\det D^2u\leq e^{C_3}\quad\text{in }\Omega.
\end{equation}
The lemma follows from
(\ref{eq:lower-bound-d}) and (\ref{eq:upper-bound-d}).
\end{proof}

\subsection{Proof of Theorem \ref{thm:W4p-est}} Finally, we can prove Theorem  \ref{thm:W4p-est} which implies Theorem \ref{SBV-2}.
\begin{proof}[Proof of Theorem \ref{thm:W4p-est}]
Once we have the determinant estimates, we can establish the higher estimates by using the regularity of the linearized Monge-Amp\`ere equation with drift terms as in  Section \ref{SBV-plap-pf} and Section \ref{SBV-1-pf}.
However, in two dimensions, we can also establish these estimates as in \cite{LeCPAM}.

By Lemma \ref{det-est-dim2}, we have 
\beq\label{det22}
0<\lambda\leq \det D^2u\leq \Lambda \quad\text{in }\Omega
\eeq
 for $\lambda$, $\Lambda$ depending on $\Omega$, $\varphi$, $\psi$, $\textbf{b}$, $p$ and $\|c\|_{L^p(\Omega)}$. By the interior $W^{2,1+\varepsilon}$ estimates for Monge-Amp\`ere equation \cite{DFS,F, Sc}, we have $D^2 u\in L^{1+\e}_{loc}(\Omega)$ for some  constant $\e(\lambda,\Lambda)>0$.
By the global $W^{2,1+\varepsilon}$ estimates for the Monge-Amp\`ere equation \cite{S3}, there exists a constant $C_0>0$ depending on $\Omega$, $\varphi$, $\psi$, $\textbf{b}$, $p$ and $\|c\|_{L^p(\Omega)}$ such that
\[
\|u\|_{W^{2,1+\varepsilon(\lambda,\Lambda)}(\Omega)}\leq C_0.
\]
Let $q:=\min\{p,1+\varepsilon(\lambda,\Lambda)\}>1$. Then \[G:= -\Delta u+\bb\cdot Du+c\]  satisfies
\[\|G\|_{L^q(\Omega)} \leq C_1\]
where $C_1>0$ depending on $\Omega$, $p$, $\varphi$, $\psi$, $\textbf{b}$ and $\|c\|_{L^p(\Omega)}$. 
Recall that 
\[
U^{ij}D_{ij} w=G\quad\text{on }\Omega,\quad\text{and } w=\psi \quad\text{on }\p\Omega.
\]
By the global H\"older estimate for the linearized Monge-Amp\`ere equation \cite{LN2} with $L^{q}$ right-hand side where $q>n/2$, we deduce
\[
\|w\|_{C^\alpha(\overline{\Omega})}\leq C(\Omega, \varphi, \psi, p, \bb, c)
\]
where $\alpha\in (0, 1)$ depends on $\Omega, \varphi, \psi, p, \bb, c$. The proof of $W^{4, p}(\Omega)$ estimate for $u$ is now the same as that of
Theorem \ref{thm:SBV-p-lap}$(ii)$. Hence, the theorem is proved.
\end{proof}

\section{Extensions and the proof of Theorem \ref{LMA-G}}
\label{rem_sect}
In this section, we discuss (\ref{Abreu-sin})  with more general lower order terms,
and present a proof of Theorem \ref{LMA-G} for completeness. 
\subsection{Possible extensions of the main results}
The following remarks indicate some possible extensions of our main results.

\begin{rem}
From the proofs in Sections \ref{SBV-plap-pf}-\ref{SBV-2-pf} and the $L^\infty$-estimates in Lemma \ref{lem:bound-u-high}, it can be seen that some conclusions of Theorems \ref{thm:SBV-p-lap}, \ref{SBV-1} and \ref{SBV-2}
 also hold for more general cases of $c=c(x,z)$. Consider,  for example, 
\[c(x,z)=g_1(x)+g_2(x) h(z).\]
Then the following facts hold:  
\begin{enumerate}
\item[(1)] The conclusions in Theorem \ref{thm:SBV-p-lap}(ii) hold when $g_1\leq 0, g_2\leq 0; g_1, g_2\in L^p(\Omega)$ with $p>n$, and $h\geq 0$ with $h\in C^\alpha(\R)$.

\item[(2)] The conclusions in  Theorem \ref{SBV-1}(i) hold when $g_1, g_2\in C^\alpha(\overline\Omega)$, and $h\in C^\alpha(\R)$ with $|h(z)|\leq C |z|^{m}$ for $0\leq m<n-1$.
\item[(3)] The conclusions in  Theorem \ref{SBV-1}(ii) hold   when $g_1, g_2\in L^p(\Omega)$ with $p>2n$, and $h\in C^\alpha(\R)$ with $|h(z)|\leq C |z|^{m}$ for $0\leq m<n-1$.

\item[(4)] The conclusions in Theorem \ref{SBV-2} hold when  $g_1, g_2\in L^p(\Omega)$ with $p>2$, and  $h\in C^\alpha(\R)$ with $|h(z)|\leq C |z|^{m}$ for $0\leq m<1$.
\end{enumerate}
\end{rem}
\begin{rem}  Since we use the trace of $\bb$ on $\p\Omega$ in 
(\ref{eq:RHS-2nd-1}), it is natural to have $\bb\in C(\overline{\Omega};\R^n)$. It would be interesting to obtain the conclusion of Theorem \ref{SBV-2} for $\bb\in C(\overline{\Omega};\R^n)$ instead of $\bb\in C^{1}(\overline{\Omega};\R^n)$.
\end{rem}
\subsection{Global H\"older estimates for pointwise H\"older continuous solutions at the boundary}
\label{sect_LOT_proof}

In this section, we prove Theorem \ref{LMA-G}. 

The proof is similar to that of \cite[Theorem 1.4]{Le1} for the case without a drift. For completeness, we include the proof which includes the following ingredients: interior H\"older estimates for linearized Monge-Amp\`ere equations with bounded drifts, and rescalings using a consequence of the boundary Localization Theorem for the Monge-Amp\`ere equation which we will recall below.

 Under the assumption $\lambda\leq\det D^2 u\leq \Lambda$, the linearized Monge-Amp\`ere operator $U^{ij}D_{ij}$ is elliptic, but it can be degenerate and singular in the sense that the eigenvalues of $U=(U^{ij})$ can tend to zero or infinity. To prove estimates for the  linearized Monge-Amp\`ere equation that are independent of the bounds on the eigenvalues of $U$, as in \cite{CG} and subsequent works, we work with {\it sections} of $u$
instead with Euclidean balls. For a convex function $u\in C^{1}(\bom)$ defined on the closure of a convex, bounded domain $\Omega\subset \R^n$,
the section of $u$ centered at $x\in \overline \Omega$ with height $h>0$ is defined by
\begin{equation*}
 S_{u} (x, h) :=\Big\{y\in \overline \Omega: \quad u(y) < u(x) + Du(x) \cdot (y- x) +h\Big\}.
\end{equation*}

Before proving the global H\"older estimate, we recall the  interior H\"older estimate.
The following interior H\"older estimate for the 
nonhomogeneous linearized Monge-Amp\`ere equation with drift terms 
is a simple consequence of the interior Harnack inequality proved in \cite[Theorem 1.1]{LeCCM}. In \cite{M}, Maldonado proved a similar Harnack's inequality for linearized Monge-Amp\`ere equation with drift terms with different and stronger conditions on $\bb$.
\begin{thm}[Interior H\"older estimate for the 
nonhomogeneous linearized Monge-Amp\`ere equation with drift terms, {\cite{LeCCM}}]
\label{inter-H}
 Suppose that $u\in C^2(\Omega)$ is a strictly convex function in a bounded domain $\Omega\subset \R^n$ with 
  section $S_u(0, 1)$ satisfying 
 \[B_{r_1}(0)\subset S_{u}(0, 1)\subset B_{r_2}(0)\]
 for some positive constants $r_1\leq r_2$, and 
 its Hessian determinant satisfying
 \[ \lambda \leq \det D^2 u\leq\Lambda\quad\text{in }\Omega\]
 where $\lambda$ and $\Lambda$ are positive constants. Let $(U^{ij})= (\det D^2 u)(D^2 u)^{-1}$. Let $\bb: S_u(0, 1)\rightarrow\R^n$ be a vector field such that
$\|\bb\|_{L^{\infty}(S_{u}(0, 1))} \leq M$. 
 Let $v \in  W^{2,n}_{loc}(
S_{u}(0, 1))$  be a  solution to 
\[
U^{ij}D_{ij} v + \bb\cdot Dv = f\text{ in  }S_{u}(0, 1).\]
Then, there exist 
constants $\beta_0, C >0 $ depending only $\lambda, \Lambda$, $n$, $r_1$, $r_2$, and $M$
such that 
$$|v(x)-v(y)|\leq C|x-y|^{\beta_0}\Big(
\|v\|_{L^{\infty}( S_{u}(0, 1))}  + \|f\|_{L^{n}( S_{u}(0, 1))} \Big),~\quad\text{for all }x, y\in  S_{u}(0, 1/2). $$
\end{thm}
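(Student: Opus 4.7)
The plan is to reduce the interior Hölder estimate to the Krylov--Safonov--type iteration starting from the Harnack inequality of \cite[Theorem 1.1]{LeCCM}, carried out on affinely rescaled sections of $u$ rather than on Euclidean balls. This is the standard route from Harnack to Hölder continuity, adapted to the affine-invariant geometry of the linearized Monge-Amp\`ere operator; the only novelty over the drift-free case is keeping track of how the $L^{\infty}$ bound $M$ on $\bb$ transforms under rescaling.

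First I would set up the affine normalization of an interior section. Given $x_{0}\in S_{u}(0,1/2)$ and $h>0$ small, I would apply John's lemma to $S_{u}(x_{0},h)$ to produce an affine map $T$ with $|\det T|^{2}\asymp h^{-n}$ (using $\lambda\le\det D^{2}u\le\Lambda$) and $|T|\asymp h^{-1/2}$, normalizing $T(S_{u}(x_{0},h)-x_{0})$ between two balls of universal radii. Setting $\tilde u(y)=h^{-1}[u(T^{-1}y+x_{0})-\ell(T^{-1}y)]$ and $\tilde v(y)=v(T^{-1}y+x_{0})$, a direct computation shows that $\tilde v$ solves
\[
\tilde U^{ij}D_{ij}\tilde v+\tilde\bb\cdot D\tilde v=\tilde f\quad\text{in}\ S_{\tilde u}(0,1),
\]
with $\lambda\le\det D^{2}\tilde u\le\Lambda$, with effective drift $\tilde\bb(y)=h\,T\bb(T^{-1}y+x_{0})$ satisfying $\|\tilde\bb\|_{L^{\infty}}\le CMh^{1/2}$, and with $\|\tilde f\|_{L^{n}(S_{\tilde u}(0,1))}\le Ch^{1/2}\|f\|_{L^{n}(S_{u}(x_{0},h))}$. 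Hence, for all $h$ below a threshold $h_{\ast}=h_{\ast}(\lambda,\Lambda,n,M)$, the rescaled drift is bounded by the universal constant under which the Harnack inequality of \cite{LeCCM} holds, and that inequality applies to $\tilde v$ on the normalized section.

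Next, from the Harnack inequality applied to $\sup\tilde v-\tilde v$ and $\tilde v-\inf\tilde v$ on $S_{\tilde u}(0,1/2)$, one obtains the usual oscillation decay, which after undoing the rescaling reads
\[
\operatorname{osc}_{S_{u}(x_{0},h/2)}v\le\theta\,\operatorname{osc}_{S_{u}(x_{0},h)}v+Ch^{1/2}\|f\|_{L^{n}(S_{u}(x_{0},h))},
\]
with $\theta=\theta(\lambda,\Lambda,n,r_{2})\in(0,1)$. I would then iterate this on the geometric sequence of sections $S_{u}(x_{0},h_{\ast}2^{-k})$, which is legitimate since $S_{u}(x_{0},h_{\ast})\subset S_{u}(0,1)$ by the engulfing property (valid because $x_{0}\in S_{u}(0,1/2)$ and $h_{\ast}$ is small, using Caffarelli's strict convexity of $u$ which follows from the Hessian pinching). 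Summing the geometric series yields
\[
\operatorname{osc}_{S_{u}(x_{0},h)}v\le Ch^{\alpha_{0}}\bigl(\|v\|_{L^{\infty}(S_{u}(0,1))}+\|f\|_{L^{n}(S_{u}(0,1))}\bigr)
\]
for $h\le h_{\ast}$, with $\alpha_{0}$ determined by $\theta$.

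Finally, I would convert this section-based decay into a Euclidean H\"older estimate. By Caffarelli's interior $C^{1,\sigma}$ regularity for solutions of $\lambda\le\det D^{2}u\le\Lambda$ applied on $S_{u}(0,1)$, interior sections satisfy two-sided inclusions $B_{ch^{1/(1+\sigma)}}(x_{0})\subset S_{u}(x_{0},h)\subset B_{Ch^{1/2}}(x_{0})$ for $x_{0}\in S_{u}(0,1/2)$ and $h$ small; in particular, any two points $x,y\in S_{u}(0,1/2)$ with $|x-y|$ small satisfy $y\in S_{u}(x,h)$ for some $h\le C|x-y|^{1+\sigma}$, so the section decay translates into $|v(x)-v(y)|\le C|x-y|^{\beta_{0}}(\|v\|_{L^{\infty}}+\|f\|_{L^{n}})$ with $\beta_{0}=\alpha_{0}(1+\sigma)$. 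The main obstacle is precisely the tracking of the drift under rescaling in the first step: one must ensure that the threshold $h_{\ast}$ and the universal constant in Harnack are respected at every level of iteration, which is why the favorable scaling $\|\tilde\bb\|_{L^{\infty}}\le CMh^{1/2}\to 0$ is critical rather than cosmetic.
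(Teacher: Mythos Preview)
The paper does not give its own proof of this theorem: it is stated as a direct consequence of the interior Harnack inequality in \cite[Theorem~1.1]{LeCCM} and used as a black box. Your sketch is precisely the standard Krylov--Safonov derivation of H\"older continuity from Harnack, carried out on sections, and it is correct in spirit and in outline.

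One point deserves tightening. The claim $|T|\asymp h^{-1/2}$ for the John normalization of $S_u(x_0,h)$ is not justified by the hypothesis $\lambda\le\det D^2u\le\Lambda$ alone: sections can be highly eccentric, so only $|\det T|\asymp h^{-n/2}$ follows from volume considerations, while $|T|$ may be much larger than $h^{-1/2}$. What saves the drift bound is exactly the $C^{1,\sigma}$ interior regularity you invoke at the end: it gives $S_u(x_0,h)\supset B_{ch^{1/(1+\sigma)}}(x_0)$, hence $|T|\le Ch^{-1/(1+\sigma)}$ and $\|\tilde\bb\|_{L^\infty}\le CMh^{\sigma/(1+\sigma)}\to 0$. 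So the argument is internally consistent once you replace the heuristic $h^{-1/2}$ by this estimate. Alternatively, since the Harnack inequality in \cite{LeCCM} is valid for \emph{any} bounded drift (with constants depending on $M$), you do not actually need the rescaled drift to become small---only uniformly bounded along the iteration---which is a slightly weaker requirement and is how the paper itself handles the rescaled drift in the proof of Theorem~\ref{LMA-G}.
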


To bridge the interior H\"older estimates in Theorem \ref{inter-H} and the boundary H\"older estimates in (\ref{Bdr_assumed}), we need to control the shape of sections of the convex function $u$ that are tangent to the boundary $\p \Omega$. The following proposition, proved by Savin in \cite{S3} (see also \cite[Proposition 3.2]{LS}),  provides such a tool.
It is a consequence of the boundary Localization Theorem for the Monge-Amp\`ere equation, proved by Savin in \cite[Theorem 2.1]{S1} and \cite[Theorem 3.1]{S2}.

\begin{prop}[Shape of sections tangent to the boundary, {\cite{S3}}]\label{tan_sec}
Assume that $\Omega\subset\R^n$ is a uniformly convex domain with boundary $\p\Omega\in C^3$.
Let $u \in C(\overline 
\Omega) 
\cap 
C^2(\Omega)$ be a convex function satisfying 
\[\lambda \leq \det D^2 u  \leq \Lambda \quad \text{in}\quad \Omega\]
for some positive constants $\lambda$ and $\Lambda$. Moreover, assume that $u|_{\p\Omega}\in C^3$.
 Assume that for some $y \in \Omega$ the section $S_{u}(y, h) \subset \Omega$
is tangent to $\p \Omega$ at some point $x_0\in\p\Omega$, that is, $\p S_{u}(y, h)\cap
\p\Omega =x_0$, for some $h \le h_0(\lambda, \Lambda, \Omega, u|_{\p\Omega}, n)$. Then there exists a small positive
 constant $k_0$ depending on $\lambda$, $\Lambda$, $\Omega, u|_{\p\Omega} $ and $n$ such that
$$k_0 E_h \subset S_{u}(y, h) -y\subset k_0^{-1} E_h, \quad \quad k_0 h^{1/2} \leq \dist(y,\p \Omega) \leq k_0^{-1} h^{1/2}, \quad $$
where $E_h= h^{1/2}A_{h}^{-1}B_1(0)$ is an ellipsoid with $A_h$ being a linear transformation with the following properties  
\begin{equation*} \|A_{h}\|, \|A_{h}^{-1} \| \le k_0^{-1} |\log  h|;~ \det A_{h}=1.
\end{equation*}

\end{prop}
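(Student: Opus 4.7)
The plan is to reduce the proposition to Savin's boundary Localization Theorem for the Monge-Amp\`ere equation, which is quoted as the source of the result and which describes precisely the shape of \emph{centered} boundary sections $S_u(x_0, \bar h)$ at $x_0 \in \partial\Omega$. The strategy has three steps: (i) normalize so the tangent point $x_0$ becomes an essentially flat boundary point and $u$ is supported by the zero affine function there; (ii) apply the Localization Theorem at $x_0$ to the centered sections; (iii) transfer the conclusions from centered boundary sections to the tangent interior section $S_u(y,h)$ by an engulfing comparison.

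For step (i), by subtracting from $u$ the affine function $\ell(x)=u(y)+Du(y)\cdot(x-y)+h$, which does not alter sections, we may assume $u\ge 0$, $u(x_0)=0$, and $S_u(y,h)=\{u<0\}\cap\Omega$. Tangency then says $x_0$ is the unique contact point of $\{u\le 0\}$ with $\partial\Omega$. After a further rotation and subtraction of a tangential affine piece coming from the $C^3$ boundary data $u|_{\partial\Omega}$, the normalization puts us in the exact setting of \cite[Theorem 2.1]{S1} and \cite[Theorem 3.1]{S2}: the Localization Theorem then furnishes, for each $\bar h\le h_0(\lambda,\Lambda,\Omega,u|_{\partial\Omega},n)$, a unimodular linear map $A_{\bar h}$ with $\|A_{\bar h}\|,\|A_{\bar h}^{-1}\|\le k^{-1}|\log \bar h|$ such that
\[
k\bigl(\bar h^{1/2}A_{\bar h}^{-1}B_1(0)\bigr)\cap\Omega\ \subset\ S_u(x_0,\bar h)-x_0\ \subset\ k^{-1}\bigl(\bar h^{1/2}A_{\bar h}^{-1}B_1(0)\bigr)\cap\Omega.
\]

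For step (iii), the key point is that the tangent interior section $S_u(y,h)$ and the centered boundary section $S_u(x_0,h)$ are comparable up to a universal dilation in height. Indeed, since $u(x_0)=0$ in the normalization above, one direction is immediate: $S_u(y,h)\subset S_u(x_0,c_1 h)$, because for any $z\in S_u(y,h)$ one has $u(z)<0\le u(x_0)+c_1 h$ once the tangent hyperplane at $x_0$ is controlled by the ellipsoidal shape from step (ii). The reverse inclusion $S_u(x_0,c_2 h)\cap\Omega\subset S_u(y,h)$ up to a shift follows from a standard engulfing argument based on convexity plus the Hessian-determinant bounds: one compares $u$ with a quadratic paraboloid tangent from below at $y$, uses $\det D^2u\le\Lambda$ to bound $|Du(y)|$ in terms of $h^{1/2}$ and the ellipsoidal radii, and then invokes the uniform strict convexity $\det D^2 u \ge \lambda$ to control how far $S_u(y,h)$ extends. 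Combining these two inclusions with the normalized bounds on $S_u(x_0,\bar h)$ yields the sandwich estimate in the proposition with $A_h:=A_{c h}$. The distance estimate $k_0 h^{1/2}\le\mathrm{dist}(y,\partial\Omega)\le k_0^{-1}h^{1/2}$ then follows because $y$ is the center of $S_u(y,h)$ (hence essentially the ``center'' of the ellipsoid $E_h$) while $x_0\in\partial\Omega$ sits on its boundary, so $\mathrm{dist}(y,\partial\Omega)$ is comparable to the half-thickness of $E_h$ in the inward normal direction, which is of order $h^{1/2}$ by the unimodularity of $A_h$.

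The main obstacle is step (iii), the engulfing comparison between sections centered at an \emph{interior} point $y$ and at the \emph{boundary} point $x_0$: Savin's theorem is stated only for the latter, whereas we need the former. Making this precise requires quantitative control on $|Du(y)|$ and on the deviation of $u$ from its tangent plane at $x_0$ in terms of $h$, using both the determinant bounds and the $C^3$ boundary regularity of $u|_{\partial\Omega}$ (so that the subtraction of the tangential boundary piece in the normalization (i) is controlled). Granted these comparisons, the rest is bookkeeping with the explicit constants provided by Savin.
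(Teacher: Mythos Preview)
The paper does not give its own proof of this proposition: it is quoted directly from Savin \cite{S3} (see also \cite[Proposition~3.2]{LS}), with only the remark that it is a consequence of the boundary Localization Theorem of \cite{S1,S2}. So there is no paper-internal argument to compare against beyond that one-line attribution.

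Your outline is in the right spirit and matches what the cited references actually do: normalize at the contact point, invoke the Localization Theorem for the boundary-centered sections $S_u(x_0,\bar h)$, and then compare those with the tangent interior section $S_u(y,h)$. Two comments on your write-up. First, after subtracting $\ell(x)=u(y)+Du(y)\cdot(x-y)+h$ you do not get ``$u\ge 0$'' globally (inside the section the new function equals $-h$ at $y$); what you do get is $u\ge 0$ on $\partial\Omega$ with equality exactly at $x_0$, which is what is needed. Second, you are conflating two different normalizations in step~(iii): the one that makes $S_u(y,h)=\{u<0\}$ has $Du(y)=0$ but $Du(x_0)\neq 0$ in general, whereas the Localization Theorem's normalization has $Du(x_0)=0$. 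The passage between them---equivalently, controlling $|Du(y)-Du(x_0)|$ in terms of $h^{1/2}$ and the ellipsoidal geometry---is exactly the quantitative content you flag as the ``main obstacle,'' and it is carried out in the references using the Localization Theorem itself together with the $C^3$ boundary data. With those two points sharpened, your sketch tracks the cited argument.
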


Now, we are ready to prove Theorem \ref{LMA-G}.

\begin{proof}[Proof of Theorem \ref{LMA-G}] 
By considering the equation satisfied by $(\|\varphi\|_{C^{\alpha}(\p\Omega)} + \|f\|_{L^{n}(\Omega)})^{-1}v$,
we can assume that
$$\|\varphi\|_{C^{\alpha}(\p\Omega)} + \|f\|_{L^{n}(\Omega)}=1,$$
and we need to show that 
$$\|v\|_{C^{\beta}(\overline{\Omega})}\leq C(\lambda, \Lambda, n, \alpha, \Omega, u|_{\p\Omega}, \gamma,\delta, K, M),$$
for some $\beta\in (0, 1)$ depending on $n,\lambda,\Lambda$, $\Omega, u|_{\p\Omega}$, $\gamma$, and $M$.

{\it Step 1: H\"older estimates in the interior of a section tangent to the boundary.}
Let $y\in \Omega $ with \[r=r_y:=\text{dist} (y,\partial\Omega) \le c_1(n,\lambda,\Lambda,\Omega, u|_{\p\Omega}),\] and consider the maximal interior section $S_u(y, h)$ centered at $y$, that is
\[h=h_y:=\sup\{t\,|~ S_u(y,t)\subset \Omega\}.\]

By Proposition \ref{tan_sec} applied at the point $x_0\in \p S_u(y, h) \cap \p \Omega,$ we can find a constant $k_0(n,\lambda,\Lambda,\Omega, u|_{\p\Omega})>0$ such that
 \begin{equation}
 \label{hrk0}
 k_0 h^{1/2} \leq r \leq k_0^{-1}  h^{1/2},
\end{equation}
and $S_u(y,h)$ is equivalent to an ellipsoid $E_h$, that is, 
$$k_0E_h \subset S_u(y,  h)-y \subset k_0^{-1}E_h,$$
where
\begin{equation}E_h:= h^{1/2}A_{h}^{-1}B_1(0),~ \text{ with } \|A_{h}\|, \|A_{h}^{-1} \| \le k_0^{-1} |\log  h|;~ \det A_{h}=1.
\label{Eh0}
\end{equation}
Let
$$T\tilde x:=y+h^{1/2}A_{h}^{-1}\tilde x.$$
We rescale $u$ by
$$\tilde u(\tilde x):=\frac {1}{  h} [u(T\tilde x)-u(y)-D u(y)\cdot (T\tilde x-y)].$$
Then
\[\lambda\leq \det D^2\tilde u(\tilde x)\leq\Lambda,\]
and
\begin{equation}
\label{normalsect0}
B_{k_0}(0) \subset \tilde S_1 \subset B_{k_0^{-1}}(0), \quad \quad \tilde S_1:= S_{\tilde u}(0, 1)=h^{-1/2} A_{ h}(S_u(y,  h)- y).
\end{equation}
Define the rescalings $\tilde v$ for $v$, $\tilde \bb $ for $\bb$, and $\tilde g$ for $g$ by
$$\tilde v(\tilde x):= v(T\tilde x)- v(x_{0}),\quad \tilde \bb (\tilde x) =  h^{1/2} A_{ h} \bb(T\tilde x), \quad  \tilde g(\tilde x):= h g(T\tilde x),\quad \tilde x\in \tilde S_{1}.$$
Simple computations give
\[D \tilde v(\tilde x)= h^{1/2}(A_{h}^{-1})^{t} D v(T\tilde x), \]
\begin{equation*}
 D^2 \tilde u(\tilde x)= (A_{h}^{-1})^{t} D^2 u(T\tilde x) A_{h}^{-1},\quad  D^2 \tilde v(\tilde x)= h(A_{h}^{-1})^{t} D^2 v(T\tilde x) A_{h}^{-1}, 
\end{equation*}
and the cofactor matrix $\tilde U=(\tilde U^{ij})$ of $D^2 \tilde u$ satisfies
$$\tilde U(\tilde x): = (\det D^2 \tilde u) (D^2 \tilde u)^{-1} = (\det D^2 u) A_{h} (D^2 u)^{-1} (A_{h})^{t} = A_h U(T\tilde x) (A_h)^{t}.$$
Therefore, we find that
\begin{equation*}\tilde U^{ij} D_{ij} \tilde v= \trace (\tilde U D^2 \tilde v)=h (U^{ij} D_{ij} v)(T\tilde x)~\text{ in }~\tilde S_1.
\end{equation*}
It is now easy to see that 
 $\tilde v$ solves
\[\tilde U^{ij} D_{ij}\tilde v + \tilde \bb \cdot D\tilde v= \tilde g\quad \text{in }\tilde S_{1}.\]
Due to (\ref{Eh0}), and the smallness of $h$ (see (\ref{hrk0})), we have the following bound
\begin{equation*}
\|\tilde \bb\|_{L^{\infty}(\tilde S_1)} \leq k_0^{-1}h^{1/2}|\log h|\cdot\|\bb\|_{L^{\infty}(S_u(y, h))} \leq k_0^{-1}h^{1/2}|\log h| M \leq M.
\end{equation*}
Now, we apply the interior H\"older estimates in Theorem \ref{inter-H} to $\tilde v $ to obtain a small constant $\beta\in (0,1)$ depending only on $n, \lambda, \Lambda$, $k_0$, and $M$, such that
\begin{multline*}\abs{\tilde v (\tilde z_{1})-\tilde v(\tilde z_{2})}\leq C_1(n, \lambda, \Lambda, M)\abs{\tilde z_{1}-\tilde z_{2}}^{\beta}\Big \{\|\tilde v \|_{L^{\infty}(\tilde S_{1})} + \|\tilde g\|_{L^{n}(\tilde S_{1})}\Big\},\\ \text{for all } \tilde z_{1}, \tilde z_{2}\in \tilde S_{1/2}:= S_{\tilde u}(0, 1/2).\end{multline*}
By (\ref{normalsect0}), we can decrease $\beta$ in the above inequality if necessary, and thus assume that
$$2\beta\leq \gamma.$$ 
A simple computation using (\ref{Eh0}) gives
$$ \|\tilde g\|_{L^{n}(\tilde S_{1})} = h^{1/2}\|g\|_{L^{n}(S_u(y, h))}.$$
Moreover, from (\ref{hrk0}) and (\ref{Eh0}), we infer the following inclusions regarding sections and balls
\begin{equation}
\label{ballsecth2}
B_{c_2 \frac{r}{\abs{\log r}}}(y)\subset S_u(y,h/2) \subset S_u(y,h) \subset B_{C_2 r\abs{\log r}}(y),
\end{equation}
for some $c_2\in (0, 1)$ and  $C_2>0$ depending on $n,\lambda,\Lambda,\Omega, u|_{\p\Omega}$. We also deduce that
$$\text{diam} (S_u(y,h))\leq C(n,\lambda,\Lambda,\Omega, u|_{\p\Omega})r\abs{\log r}\leq \delta$$
if 
\[r\leq c_3(n,\lambda,\Lambda, \Omega, u|_{\p\Omega}, \delta).\]
We now consider $r$ satisfying the above inequality. 
By (\ref{Bdr_assumed}), we have
$$\norm{\tilde v }_{L^{\infty}(\tilde S_{1})} \leq K \text{diam} (S_u(y, h))^{\gamma} \leq C_3 (r\abs{\log r})^{\gamma},$$
where $C_3= C_3(n,\lambda,\Lambda,\Omega, u|_{\p\Omega},\gamma, K)$.
Hence
\begin{equation*}\abs{\tilde v (\tilde z_{1})-\tilde v(\tilde z_{2})}\leq C_4\abs{\tilde z_{1}-\tilde z_{2}}^{\beta}\Big\{(r\abs{\log r})^{\gamma}  + h^{1/2}\|g\|_{L^{n}(S_u
(y, h))}\Big\} \quad \text{for all }\tilde z_{1}, \tilde z_{2}\in \tilde S_{1/2}\end{equation*}
where $C_4=C_4(n,\lambda,\Lambda,\Omega, u|_{\p\Omega},\delta, \gamma, K, M)$ 

Each $z\in S_u(y,  h/2)$ corresponds to a unique $\tilde z= T^{-1}z\in \tilde S_{1/2}$. 
 Rescaling back, recalling $2\beta\leq\gamma$, and using
$\tilde z_1-\tilde z_2= h^{-1/2}A_{h}(z_1-z_2)$,
and the fact that
\begin{eqnarray*}\abs{\tilde z_1-\tilde z_2}&\leq& \| h^{-1/2}A_{h}\|\abs{z_1-z_2} \\&\leq& k_0^{-1} h^{-1/2}\abs{\log h}\abs{z_1-z_2}\leq
C_5(n,\lambda,\Lambda, \Omega, u|_{\p\Omega}) r^{-1}\abs{\log r}\abs{z_1-z_2},\end{eqnarray*}
we find
\begin{equation}
\label{vbh2}
|v(z_1)-v( z_2)|  \le  |z_1-z_2|^{\beta} \quad \text{for all } z_1, z_2 \in  S_u(y, h/2),
\end{equation}
provided that $r=r_y\leq c_3<1$ is small.

{\it Step 2: Global H\"older estimates.}
We now combine (\ref{vbh2}) with (\ref{Bdr_assumed}) and (\ref{ballsecth2}) to prove $$\|v\|_{C^\beta(\bar \Omega)} \leq C(n,\lambda,\Lambda,\Omega, u|_{\p\Omega},\alpha,\delta, \gamma, K, M).$$
Indeed, as in (\ref{h-from-above}), there exists a constant $C_\ast(n,\lambda, M,\text{diam}(\Omega))$ such that
\begin{equation}
\label{vmax}
\|v\|_{L^{\infty}(\Omega)}\leq C_\ast.\end{equation}
It remains to estimate $\abs{v(x)-v (y)}/\abs{x-y}^{\beta}$ for $x$ and $y$ in $\Omega$. Let $r_{x} = \text{dist}(x, \p\Omega)$ and $r_{y}= \text{dist} (y, \p\Omega).$ Assume, without loss of generality, that 
$r_{y}\leq r_{x}$. Take $x_{0}\in\p\Omega$ and $ y_{0}\in \p\Omega$ such that \[r_{x}= \abs{x- x_{0}} \quad\text{and }r_{y} = \abs{y-y_{0}}.\]

From (\ref{vmax}) and the interior H\"older estimates in Theorem \ref{inter-H}, we only need to consider the case $r_{y}\leq r_{x}\leq c_3<1$. Consider the following cases.\\
{\it Case 1:
$\abs{x-y}\leq c_2 \frac{r_{x}}{\abs{\log r_{x}}}.$}
In this case, by (\ref{ballsecth2}), we have $$y\in B_{c_2 \frac{r_{x}}{\abs{\log r_{x}}}}(x)\subset S_u(x,h_x/2),$$
where
\[h_x=\sup\{t\,|~ S_u(x,t)\subset \Omega\}.\]
 In view of (\ref{vbh2}), we have
$$\frac{\abs{v(x)-v (y)}}{\abs{x-y}^{\beta}}\leq 1.$$
{\it Case 2:
$
\abs{x-y}\geq c_2 \frac{r_{x}}{\abs{\log r_{x}}}.$}
In this case, we have
\begin{equation}
\label{rxlogx}
r_{x}\leq c_2^{-1}\abs{x-y}\abs{\log \abs{x-y}}.
\end{equation}
Indeed, if 
$$1>r_{x}\geq \abs{x-y}\abs{\log \abs{x-y}}\geq \abs{x-y} $$
then
$$r_{x}\leq \frac{1}{c_2}\abs{x-y}\abs{\log r_{x}}\leq \frac{1}{c_2}\abs{x-y}\abs{\log \abs{x-y}}.$$
Due to (\ref{rxlogx}), we have
$$\abs{x_0-y_0}\leq r_{x} + \abs{x-y} + r_{y}\leq C_6(n,\lambda,\Lambda,\Omega, u|_{\p\Omega}) \abs{x-y}\abs{\log \abs{x-y}}.$$
Therefore, by (\ref{Bdr_assumed}),  $\|\varphi\|_{C^{\alpha}(\p\Omega)} \leq 1$, and $2\beta\leq \gamma\leq \alpha$, we obtain
\begin{eqnarray*}\abs{v(x)-v (y)}&\leq& \abs{v(x)- v(x_{0})} + 
\abs{v(x_{0})- v(y_{0})} + \abs{v(y_{0})- v(y)}\\ & \leq& C \left(r_{x}^{\gamma} + \abs{x_{0}- y_{0}}^{\alpha} + r_{y}^{\gamma}\right) \\ &\leq& C\left(\abs{x-y}\abs{\log \abs{x-y}}\right)^{\gamma}\leq C \abs{x-y}^{\beta},
\end{eqnarray*}
where $C=C(n,\lambda,\Lambda,\alpha, \Omega, u|_{\p\Omega},\delta, \gamma, K, M)$. This gives an estimate for $\abs{v(x)-v (y)}/\abs{x-y}^{\beta}$ in {\it Case 2}.

The proof of the theorem is complete.
\end{proof}

{\bf Acknowledgements. } The authors would like to thank the referee for carefully reading the paper and providing constructive comments that help improve the original manuscript.

\end{document}